\crefname{prop}{Proposition}{Propositions}
\newcommand{\defeq}{:=}
\newcommand{\bigO}{O}
\renewcommand{\leq}{\leqslant}
\renewcommand{\geq}{\geqslant}
\renewcommand{\phi}{\varphi}
\let\sp\relax
\newcommand{\sp}[1]{\langle #1 \rangle}
\newcommand{\subdiff}{\partial}
\DeclareMathOperator*{\argmin}{arg\,min}
\newcommand{\reg}{\mathcal J}
\newcommand{\dJ}{\subdiff \reg}
\newcommand{\Jminsol}{u^\dagger_{\TV}}
\DeclareMathOperator{\prox}{prox}
\DeclareMathOperator*{\supp}{supp}
\DeclareMathOperator{\vecspan}{span}
\let\ker\relax
\newcommand{\ker}[1]{\mathcal N(#1)}
\let\div\relax
\DeclareMathOperator{\div}{div}
\let\L\relax
\DeclareMathOperator{\L}{\mathrm{L}}
\DeclareMathOperator{\BV}{BV}
\DeclareMathOperator{\TV}{TV}
\newcommand{\R}{\mathbb R}
\newcommand{\PSF}{\text{PSF}}
\theoremstyle{plain}
\newtheorem{thm}{\protect\theoremname}
  \theoremstyle{plain}
  \newtheorem{lem}[thm]{\protect\lemmaname}
  \theoremstyle{plain}
  \theoremstyle{plain}
  \newtheorem{defn}[thm]{\protect\definitionname}
  \theoremstyle{plain}
  \newtheorem{prop}[thm]{\protect\propositionname}
  \theoremstyle{definition}
  \newtheorem{remark}{Remark}
  \theoremstyle{plain}
  \newtheorem{assumption}[thm]{\protect\assumptionname}
  \providecommand{\definitionname}{Definition}
  \providecommand{\examplename}{Example}
  \providecommand{\lemmaname}{Lemma}
  \providecommand{\corollaryname}{Corollary}
  \providecommand{\propositionname}{Proposition}
  \providecommand{\conditionsname}{Conditions}
  \providecommand{\theoremname}{Theorem}
  \providecommand{\assumptionname}{Assumption}
\begin{document}

\title{Image reconstruction in light-sheet microscopy:
  spatially varying deconvolution and mixed noise}

\author[1,3,4]{Bogdan Toader}
\author[2]{J\'{e}r\^{o}me Boulanger}
\author[3]{Yury Korolev}
\author[1,5]{Martin O. Lenz}
\author[2]{James Manton}
\author[3]{Carola-Bibiane Sch\"{o}nlieb}
\author[1,4,5]{Leila Mure\c{s}an}
\affil[1]{Cambridge Advanced Imaging Centre, University of
  Cambridge, UK}
\affil[2]{MRC Laboratory of Molecular Biology, UK}
\affil[3]{Department of Applied Mathematics and
    Theoretical Physics, University of Cambridge, UK}
\affil[4]{Department of Physiology, Development and Neuroscience, University of Cambridge, UK}
\affil[5]{Sainsbury Laboratory, University of Cambridge, UK}

    
\date{}

\maketitle

\begin{abstract}
We study the problem of deconvolution for light-sheet
microscopy, where the data is corrupted by spatially
varying blur and a combination of Poisson and Gaussian
noise. The spatial variation of the point spread function
(PSF) of a light-sheet microscope is determined by the
interaction between the excitation sheet and the detection
objective PSF. First, we introduce a model of the image
formation process that incorporates this interaction,
therefore capturing the main characteristics of this imaging
modality. Then, we formulate a variational model that
accounts for the combination of Poisson and Gaussian noise
through a data fidelity term consisting of the infimal
convolution of the single noise fidelities, first introduced
in~\fullcite{calatroni2017infimal}. We establish convergence
rates in a Bregman distance under a source condition for the
infimal convolution fidelity and a discrepancy principle for
choosing the value of the regularisation parameter. The
inverse problem is solved by applying the primal-dual hybrid
gradient (PDHG) algorithm in a novel way. Finally, numerical
experiments performed on both simulated and real data show
superior reconstruction results in comparison with other
methods.
\end{abstract}

\tableofcontents


\section{Introduction}

Light-sheet microscopy is a fluorescence microscopy 
technique that enables volumetric
imaging of biological samples at high frame rate 
with better sectioning and lower photo-toxicity in
comparison to other fluorescent techniques.
This is achieved by illuminating a thin slice of the sample
using a sheet of light and detecting the emitted
fluorescence from this plane with another objective
perpendicular to the plane of the sheet.
A schematic representation of a light-sheet microscope is
shown in Figure \ref{fig:microscope}.
By contrast, widefield microscopy illuminates the whole
sample using a single objective and achieves only very
limited sectioning. Confocal microscopy allows 
improved sectioning by utilising a pinhole to discard 
out-of-focus light, at the cost of higher photo-toxicity and
reduced frame rate.
By only selectively illuminating the slice of the sample
being imaged, less photo-toxicity damage is induced in
light-sheet microscopy and, therefore, imaging of living 
samples over a longer period of time is possible.
The combination of lower photo-toxicity, better sectioning
capabilities and faster image acquisition led to
light-sheet microscopy being recognised as
``Method of the Year'' by Nature Methods 
in 2014~\cite{nmeth2015}. 

\begin{figure}[h]
  \centering
  \includegraphics[width=0.9\textwidth]{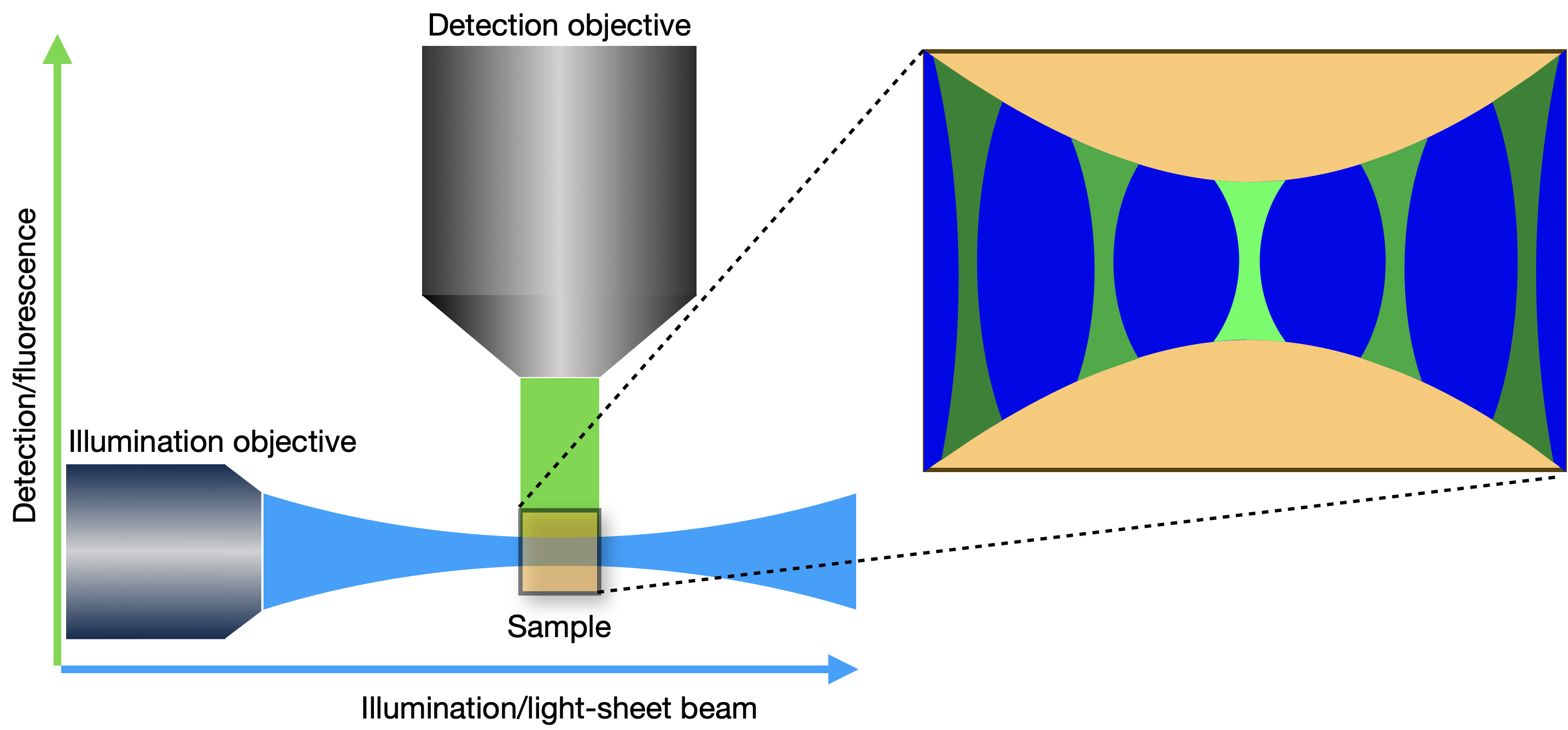}
  \caption{Schematic of a light-sheet microscope, showing
  the illumination and the detection directions.
  The interaction of the light-sheet with 
  the detection PSF leads to a spatially varying 
  overall PSF and decreasing of the pixel intensities
  away from the centre in the horizontal direction.}
  \label{fig:microscope}
\end{figure}

The focus of the present manuscript is on deconvolution 
techniques for light-sheet microscopy data. In this context,
deconvolution refers to the computational method of
reversing the effect of blurring in the image acquisition
process due to the point spread function (PSF) of the 
microscope \cite{McNally1999,Starck2002,Sarder2006}. 
Specifically, the PSF of an imaging system 
represents its response to a point object. Knowledge of the 
PSF, which can be modelled mathematically and calibrated
using bead data (samples containing small spheres of known
dimensions),
is used in the formulation of a forward
model of the image formation, which can then be inverted,
for example using optimisation methods, to reconstruct
the original, deblurred object \cite{Chambolle2016}.

In the case of light-sheet microscopy, simply knowing 
or estimating the PSF of the detection objective is not
sufficient, since the overall response of the system
to a point source is also influenced by the excitation 
light-sheet used to illuminate the slice. The overall PSF
could be approximated by the detection PSF in the region
where the illumination sheet is focused. However, the 
detection PSF becomes more distorted and loses intensity
away from the focus of the excitation light-sheet,
which is illustrated in Figure \ref{fig:microscope}. 
Therefore, the problem we propose to address can be seen as a specific
case of spatially-varying deconvolution \cite{Loic2015,Debarnot2019}. 

Two examples of acquired data are shown in Figure 
\ref{fig:data example}. We can see in both cases the 
effect of the spatially varying light-sheet: 
the image is sharper in the 
centre and blurry on the sides, with the amount of blur
growing with the horizontal distance from the centre.
In addition, the fluorescence intensity of imaged beads 
in Figure~\ref{fig:data example}(a) is
unevenly distributed despite imaging a homogeneous sample of
beads, with the centre of the image
being brighter than the left and right sides.
The aim of our work is to correct these effects.

\begin{figure}[h]
  \centering 
  \begin{minipage}[b]{0.46\linewidth}
    \centering
    \includegraphics[width=\textwidth]{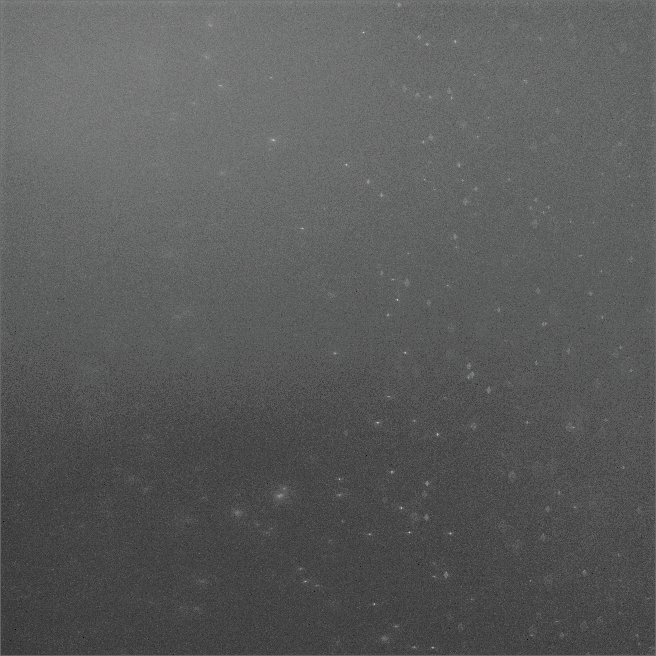}
    (a) $0.5 \mu m$ multi-colour Tetraspeck microspheres
    (slice)
  \end{minipage}
  \begin{minipage}[b]{0.46\linewidth}
    \centering
    \includegraphics[width=\textwidth]{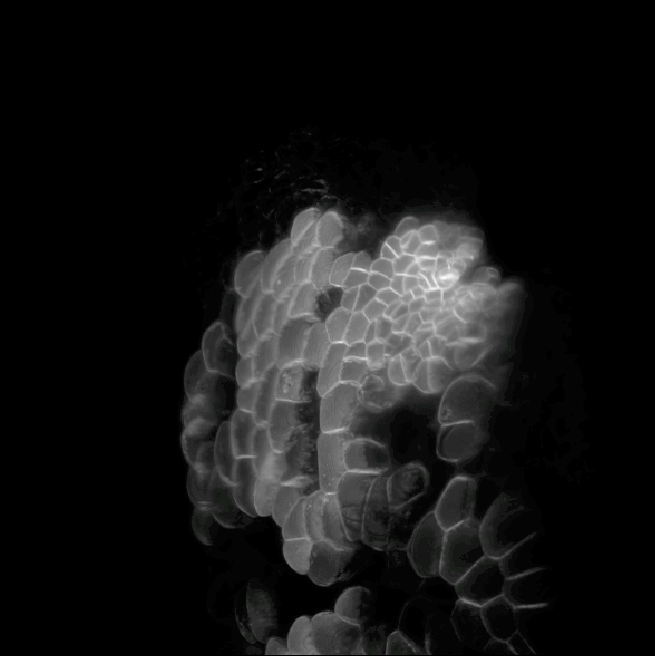}
    (b) Membrane labelled Marchantia thallus
    (maximum intensity projection)
  \end{minipage}

  \caption{Examples of light-sheet microscopy data of
    dimensions $665.6 \mu m \times 665.6 \mu m$: beads in
    (a) and Marchantia thallus in (b).
  The effect of the light-sheet is visible along the
  horizontal direction, as the image is sharp in the centre,
  where the sheet is focused, while the quality of the image
  decreases away from the centre. Another source of blur 
  observed, especially in the bead image (left) is given
  by optical aberrations due to the sample imaging medium 
  (dynamic range is stretched for better visibility). 
  The Marchantia image has been acquired using samples 
  from Dr. Alessandra Bonfanti and Dr. Sarah Robinson 
  using the genetic line provided by
  Prof. Sebastian Schornack and Dr. Giulia Arsuffi
  at the Sainsbury Laboratory Cambridge University.}
  \label{fig:data example}
\end{figure}

\subsection{Contribution}

We propose a method for deconvolution of 3D light-sheet  
microscopy data that takes into account the spatially 
varying nature of the PSF and is scalable to the dimensions 
typical to biological samples imaged using light-sheet microscopy -- 
4.86GB per 3D 16-bit stack of $2048 \times 2048 \times 580$ voxels.

Our approach is based on a new model for image formation 
that describes the interaction between the light-sheet 
and the detection PSF which replicates the physics
of the microscope. Then, we formulate an inverse problem
where the forward operator is given by model of the image formation
process and which takes into account the degradation of the data by 
both Gaussian and Poisson noise as an infimal convolution between an $\L^2$ term and a Kullback-Leibler divergence term, following~\cite{calatroni2017infimal}. The proposed variational problem is solved by
applying the Primal Dual Hybrid Gradient (PDHG) algorithm
in a novel way. Finally, we exploit the noise model to automatically tune the balance between the data fidelity and regularisation resorting to a discrepancy principle. We obtain convergence rates in a Bregman distance for the infimal convolution fidelity from~\cite{calatroni2017infimal} under a standard source condition.

In our numerical experiments, we first show how this method performs on simulated data, 
where the ground truth is known, then we apply our method
to two examples of data from experiments: an image of fluorescent 
beads and a sample of Marchantia. In both cases, we see that the
deconvolved images show improved contrast, while outperforming deconvolution using only the constant detection PSF.

\subsection{Related work}

Before describing in more detail our 
approach to the deconvolution problem,
we give a brief overview of the literature
on spatially varying deconvolution in the context
of microscopy and how our work relates to it.

Purely data-driven approaches estimate a spatially
varying PSF in a low dimensional space 
(for scalability reasons) using bead images 
\cite{Nagy1998,Loic2015,Debarnot2019}.
This is usually not application specific and can be included in a more 
general blind deconvolution framework.
Similarly, the work in \cite{Hadj2014} involves writing
the spatially varying PSF as a convex combination 
of spatially invariant PSFs. The algorithm alternates
between estimating the image and estimating the PSF.
In a similar vein, the authors of \cite{Hirsch2010} 
approach the problem of blind 
deconvolution by defining the
convolution operator using efficient 
matrix-vector multiplication operations. This decomposition
is similar to the discrete formulation of 
our image formation model.
These methods optimise over the (unknown) operator
in addition to the unknown image.
Related to these results is \cite{OConnor2017},
where the authors consider the
models from \cite{Nagy1998} and \cite{Hirsch2010}
under the assumption that the blurring operator is
known and given as a sum of weighted spatially invariant
operators. They exploit this structure of the 
operator and use a Douglas-Rachford based splitting 
to solve the optimisation problem efficiently.
While more general than our approach, 
we consider that using the knowledge
of the image formation process in the forward model
is advantageous for the reconstruction of 
light-sheet microscopy data.

A number of groups consider the problem of reconstruction
from multiple views in the context of light-sheet microscopy.
In \cite{Temerinac-Ott2012}, the problem of multi-view
reconstruction under a spatially varying blurring operator
for 3D light-sheet data is considered. They divide the
image into small blocks where they perform deconvolution
using spatially-invariant PSFs estimated from beads 
(and interpolated PSFs in regions where there are
no beads). 
In \cite{Preibisch2014}, the authors extend the
Richardson-Lucy algorithm to the multi-view reconstruction
problem in a Bayesian setting. While it allows for different 
PSFs for each view (estimated using beads), 
this work does not consider spatial variations of the PSF.
While using data from multiple views improves the quality
of the reconstruction, these approaches are agnostic to
the physics of the microscope.

Taking an approach similar in spirit 
to ours, the authors of \cite{Becker2019} model the
effective PSF of a light-sheet microscope, which is then
plugged into a regularised version of the 
Richardson-Lucy algorithm for deconvolution. However, 
while they model the detection PSF and the light-sheet 
separately, they assume the effective PSF
of the microscope is spatially invariant and the 
point-wise product of the two PSFs. In contrast,
we do not take this simplifying step in our modelling,
as we consider that the relationship between the two PSFs
plays in important role in the resulting blur of the
image.

The work of Guo et al. in \cite{Guo2020} uses a modified
Richardson-Lucy algorithm implemented on GPU to improve 
the speed of convergence, further improved by the use of 
a deep neural network, which is a promising approach.

Moreover, in \cite{zhang2020} the authors
introduce an image formation model similar to the one
described in the present manuscript. 
However, the regions of the resulting PSF where the
light-sheet is out of focus are discarded, hence approximating
the overall PSF with a constant PSF and then performing 
deconvolution using the ADMM algorithm.
In Cueva et al. \cite{Cueva2020},
a mathematical model which takes into account image
fusion with two sided illumination
is derived from fist principles. However, it is 
restricted to 2D and they do not apply the method
to real data.

Lastly, regarding the mixed Gaussian-Poisson noise fidelity,
our method follows the infimal convolution variational
approach described in~\cite{calatroni2017infimal},
with the additional light-sheet blurring operator. 
The same inverse problem, without the blurring operator, 
is solved in~\cite{zhang2021} albeit using an ADMM algorithm for 
the minimisation. 

\subsection{Paper structure}

The paper is organised as follows. In Section~\ref{sec:fm}, 
we introduce a mathematical model of the image formation 
process in a light-sheet microscope. This model describes 
how the sample is blurred by the excitation illumination together 
with the detection objective PSF. Optical aberrations of 
the system are modelled using Zernike polynomials in the 
detection PSF, which we discuss in Section~\ref{sec:psf}.
In Section~3, we define the mathematical setting for the
deconvolution problem and we state an inverse problem using
a data fidelity as an infimal convolution of the individual 
Gaussian and Poisson data fidelities. We discuss
convergence rates and a discrepancy principle for choosing
the regularisation parameter in Section~\ref{sec:ifm}.
In Section~\ref{sec:solving min problem}, we describe 
how PDHG is applied to this inverse problem, with 
details of the implementation of the proximal
operator and the convex conjugate of the joint
Kullback-Leibler divergence. Finally, we validate our method
with numerical experiments both with simulated and real
data in Section~\ref{sec:numerics}, before concluding and
giving a few directions for future work in
Section~\ref{sec:conclusion}.

\section{Forward model}
\label{sec:fm}

The first contribution of the current work 
is a model of the image formation 
process in light-sheet microscopy. By modelling
the excitation light-sheet and the detection PSF separately
and their interaction in a way that replicates the 
physics of the microscope, we are able to accurately
simulate the spatially varying PSF of the 
imaging system. We then incorporate this knowledge 
as the forward model in an inverse problem, which
we solve to remove the noise and blur in 
light-sheet microscopy data.
In this section, we describe the image formation 
process and the PSF model.

\subsection{Image formation model}
\label{sec:ifm}

A light-sheet propagated along the $x$ direction
is focused by the excitation objective at an
axial position $z=z_0$ and the local light-sheet intensity $l$ is 
modelled by the incoherent point spread function (PSF) of
the excitation objective. The sample with local density of 
fluorophores $u$ emits photons proportionally to the local
intensity $l$ of the light-sheet. These photons are then 
collected by a detection objective, whose action on the 
illuminated sample is modelled as a convolution with its 
PSF $h$. 
Finally, the sensor conjugated with the image plane $z_0$ 
collects photons and converts them to digital values for 
storage. Consequently, the recorded image is corrupted
by a combination of Gaussian and Poisson noise.
We can see here again how the local variation of 
the light-sheet will result into a spatially varying
blur and spatially-varying illumination intensity in the 
captured image.
This process is then repeated for each $z_0$ to obtain
the measured data $f$.

More specifically, we model $u$, $f$, $l$ and $h$
as functions defined on $\Omega \subset \mathbb{R}^3$, 
a rectangular domain of dimensions 
$\Omega_x \times \Omega_y \times \Omega_z$ (in $\mu m$) with
$\Omega = [-\frac{\Omega_x}{2}, \frac{\Omega_x}{2}] \times [-\frac{\Omega_y}{2},\frac{\Omega_y}{2}] 
\times [-\frac{\Omega_z}{2},\frac{\Omega_z}{2}]$.
For the sample $u$, the light-sheet $l$ and the 
detection objective PSF $h$, the measured data $f$ 
is given by:
\begin{equation}
  f(x,y,z) = \iiint l(s,t,w) u(s,t,w - z) h(x-s,y-t,w) \dif s \dif t \dif w.
  \label{eq:lightsheet model}
\end{equation}
The detection PSF $h$ is given by
\begin{equation}
  h(x,y,z) = \left|
    \iint g_{\sigma} * p_{Z}(\kappa_x,\kappa_y) e^{
      2 i \pi z 
      \sqrt{(n/\lambda_h)^2 - \kappa_x^2 - \kappa_y^2}
    }
    e^{
      2 i \pi (\kappa_x x + \kappa_y y)
    }
    \dif \kappa_x \dif \kappa_y
  \right|^2
  \label{eq:detection PSF}
\end{equation}
and the light-sheet $l$ is the y-averaged light-sheet 
beam PSF $l_{beam}$:
\begin{equation}
  l_{beam}(x,y,z) = \left|
    \iint p_0(\kappa_z,\kappa_y) e^{
      2 i \pi x 
      \sqrt{(n/\lambda_l)^2 - \kappa_z^2 - \kappa_y^2}
    }
    e^{
      2 i \pi (\kappa_z z + \kappa_y y)
    }
    \dif \kappa_z \dif \kappa_y
  \right|^2,
  \label{eq:beam PSF}
\end{equation}
where $n$ is the refractive index, $\lambda_h,\lambda_l$ are the wave 
lengths corresponding to the detection objective and light-sheet beam
respectively,
and $g_{\sigma}$ represents Gaussian blur.
Lastly, $p_Z(\kappa_x,\kappa_y)$ and $p_0(\kappa_z,\kappa_y)$
are the pupil functions for the detection PSF and the light-sheet beam 
respectively, both given by:
\begin{equation}
  p_{\phi}(x,y) = \begin{cases}
    e^{2i\pi \phi} 
    \quad &\text{for} \quad \sqrt{x^2+y^2} \leq NA/\lambda_i,
    \\
    0
    \quad &\text{otherwise},
  \end{cases}
\end{equation}
for their respective $\lambda_i = \lambda_h$ or
$\lambda_i = \lambda_l$,
where the phase for the light-sheet pupil $p_0$ is
equal to zero and the phase for the detection PSF 
pupil $p_Z$ is an approximation of the optical 
aberrations written as an expansion in a Zernike polynomial
basis, which we will explain in more detail
in Section \ref{sec:psf},
and with different numerical apertures NA. 
In general, the NA of the excitation sheet is much lower 
than the NA of the detection lens.
We note that the overall process is not translation 
invariant and cannot be modelled by a convolution operator.

Note that both the detection PSF $h$ and the light-sheet 
PSF have a similar formulation derived from:
\begin{equation}
  \PSF(x,y,z) = \left|
    \iint p(\kappa_x,\kappa_y) e^{
      2 i \pi z 
      \sqrt{(n/\lambda_i)^2 - \kappa_x^2 - \kappa_y^2}
    }
    e^{
      2 i \pi (\kappa_x x + \kappa_y y)
    }
    \dif \kappa_x \dif \kappa_y
  \right|^2,
  \label{eq:psf}
\end{equation}
which includes the pupil function for modelling aberrations
and a defocus term before taking the Fourier transform
(see, for example \cite{Stokseth1969,Soulez2013}).

In practice, the image formation process modelled 
by \eqref{eq:lightsheet model} is discretised at the
point of recording by the camera sensor in the $xy$ plane
and by the step size of the light-sheet in the $z$ 
direction. 
If the camera has a resolution of $N_x \times N_y$ pixels
and the light-sheet illuminates the sample at $N_z$ 
distinct steps, 
the model \eqref{eq:lightsheet model} becomes:
\begin{equation}
  \tilde{f}_{i,j,k} = \frac{1}{\tilde{C}}
    \sum_{i'=1}^{N_x} \sum_{j'=1}^{N_y} \sum_{k'=1}^{N_z}
    \tilde{l}_{i',j',k'} \tilde{u}_{i',j',k' - k} \tilde{h}_{i-i',j-j',k'},
  \label{eq:lightsheet model discrete}
\end{equation}
for all $i = 1,\ldots,N_x, j = 1,\ldots,N_y,k = 1,\ldots,N_z$,
and a normalisation constant $\tilde{C}$,
where $\tilde{u},\tilde{f},\tilde{l},\tilde{h} \in \mathbb{R}^{N_x \times N_y \times N_z}$
are the discretised versions of $u,f,l,h$ respectively. 
Similarly, the sampling performed by the camera sensor
leads to a discretisation of the Fourier space and the use
of the discrete Fourier transform in the
PSF and light-sheet models \eqref{eq:detection PSF}
and \eqref{eq:beam PSF}.
Lastly, in our implementation we normalise
$\tilde{h}$ so that 
$\sum_{i=1}^{N_x}\sum_{j=1}^{N_y}\sum_{k=1}^{N_z} \tilde{h}_{i,j,k} = 1$ 
and choose the normalisation constant $\tilde{C}$
so that the norm of the
resulting operator is equal to one.

\subsection{Derivation of the model}

Let $l, u, h$ be defined as in Section~\ref{sec:ifm},
with $h$ symmetric around the origin
and $l$ translation invariant in the $y$ direction, 
centred and symmetric in the $xz$ plane around the origin.
For a fixed $z_0 \in [-\frac{\Omega_z}{2},\frac{\Omega_z}{2}]$, we
take the following steps, which replicate the inner
workings of a light-sheet microscope:
\begin{enumerate}
  \item  
    Image the sample at $z = z_0$:
    centre the sample $u$ at $z_0$ and multiply 
    the result with the light-sheet $l$:
    \begin{equation}
      F(x,y,z;z_0) = l(x,y,z) \cdot u(x,y,z - z_0),
    \end{equation}
  \item Convolve with the objective PSF $h$:
    \begin{align}
      C(x,y,z;z_0) &= F(x,y,z;z_0) * h(x,y,z) \\
      &=
      \iiint F(s,t,w;z_0) h(x-s,y-t,z-w) \dif s \dif t \dif w
    \end{align}
  \item Slice at $z = 0$:
    \begin{equation}
      f(x,y,z_0) = \left[
        C(x,y,z;z_0)
      \right]_{z=0},
    \end{equation}
\end{enumerate}
which leads to:
\begin{equation}
  f(x,y,z_0) = \iiint l(s,t,w)u(s,t,w-z_0) h(x-s,y-t,-w) 
    \dif s \dif t \dif w.
\end{equation}
This is the same as model \eqref{eq:lightsheet model}, where we 
substitute $z$ for $z_0$ and note that $h$ is symmetric 
in the third variable around the origin.

For a discretisation of the domain using a 3D grid 
with $N_x \times N_y$ pixels and $N_z$ light-sheet steps,
the forward model can be computed by following the three
steps above for each $k=1,\ldots,N_z$, where we perform the convolutions
using the fast Fourier transform (FFT), resulting in a number of 
$\mathcal{O}(N_x N_y N_z^2 \log(N_x N_y N_z))$ operations.

Alternatively, we can re-write the last integral above as:
\begin{equation}
  f(x,y,z_0) = \int K(x,y,w) * h(x,y,w) \dif w,
  \label{eq:model alt}
\end{equation}
where 
\begin{equation}
  K(x,y,w) = l(x,y,w) u(x,y,w-z_0),
\end{equation}
and the convolution in \eqref{eq:model alt}
is a 2D convolution in $(x,y)$:
\begin{equation}
  K(x,y,w) * h(x,y,w) = 
  \iint 
    K(s,t,w) h(x-s,y-t,w) 
  \dif s \dif t.
\end{equation}
In terms of numbers of FFTs performed on a 
discretised $N_x \times N_y \times N_z$ grid,
this alternative formulation requires 
$\mathcal{O}(N_x N_y N_z^2 \log(N_x N_y))$
operations.


\subsection{Point spread function model}
\label{sec:psf}

While both the light-sheet profile and the detection PSF are 
based on the same model of a defocused 
system \eqref{eq:psf} introduced in \cite{Stokseth1969},
note that our definition of $h$ in \eqref{eq:detection PSF}
includes an additional convolution operation with 
a Gaussian $g_{\sigma}$ and a pupil function $p_Z$
with a non-zero phase. Let us turn to why this is the case.

It is well known that optical aberrations hamper results based 
on deconvolution with theoretical PSFs. In light-sheet microscopy, 
the effect of aberrations is more visible away from the centre, 
as shown for example in the bead image in Figure \ref{fig:data example},
or in the more detailed example beads in Figure \ref{fig:psfs data}.
It is, therefore, required that we model the (spatially
invariant) aberrations of the detection lens.


\begin{figure}[h]
  \begin{minipage}[t]{0.32\linewidth}
    \centering
    \includegraphics[width=\textwidth]{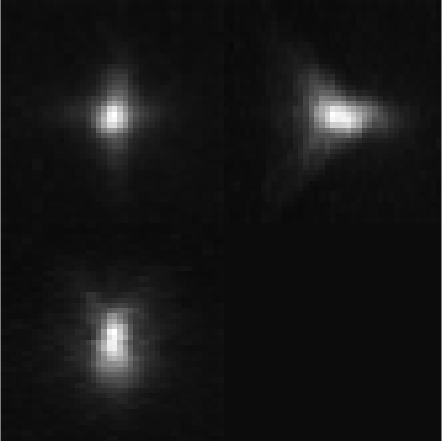}
    (a) Bead, no aberrations (maximum intensity projections)
  \end{minipage}
  \begin{minipage}[t]{0.32\linewidth}
    \centering
    \includegraphics[width=\textwidth]{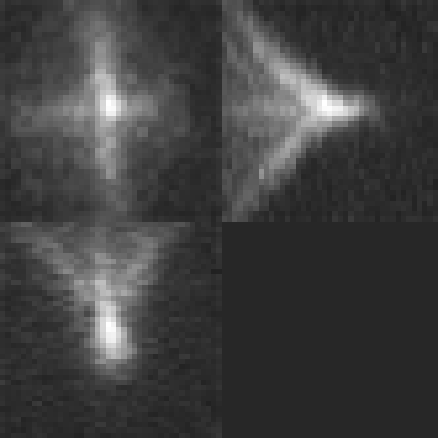}
    (b) Bead, with aberrations (maximum intensity
    projections)
  \end{minipage}
  \begin{minipage}[t]{0.32\linewidth}
    \centering
    \includegraphics[width=\textwidth]{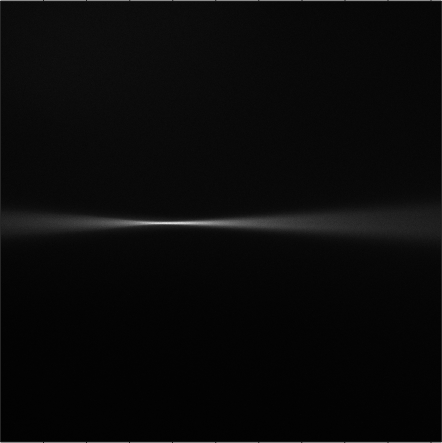}
    (c) Light sheet profile (slice showing an $x-z$ plane) 
  \end{minipage}

  \caption{Examples of beads and light-sheet profile. 
  The bead in (a) is cropped
  from the centre of Figure~\ref{fig:data example}(a) 
  and the bead in (b) is cropped from the right-hand side 
  of Figure~\ref{fig:data example}(a).
  The maximum intensity projections are taken
  in the $x-y$ plane (top left), the $z-y$ plane (top right)
  and the $x-z$ plane (bottom left).}
  \label{fig:psfs data}
\end{figure}

\begin{figure}[h]
  \centering
  \includegraphics[width=0.46\textwidth]{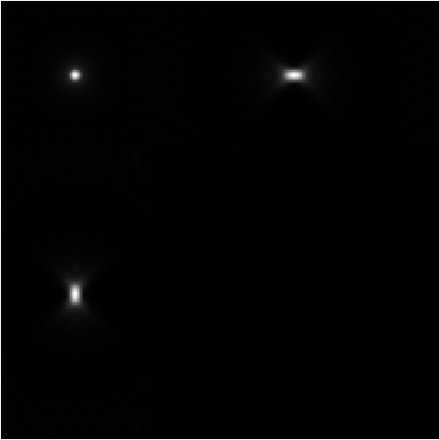}

  \caption{Objective PSF used in our model, 
    with no aberrations (maximum intensity projections taken
    in the same way as in Figure~\ref{fig:psfs data}).}
  \label{fig:simulated psf h}
\end{figure}

The general PSF model \eqref{eq:psf}, with the phase
of the pupil function equal to zero, does not take
optical aberrations into account and therefore it is not 
an accurate representation of the objective PSF $h$.
For example, a PSF calculated using \eqref{eq:psf} 
with zero phase of the pupil and the parameters of the
detection objective, shown
in Figure~\ref{fig:simulated psf h}, does not resemble
the actual bead images in the data 
in Figure~\ref{fig:psfs data}.

There has been extensive work on the problem of phase reconstruction
in the literature \cite{Dunn2011,Hanser2004,Paxman1992,Petrov2017}, 
but here we take a more straightforward approach 
using Zernike polynomials to include aberrations in 
the PSF~\cite{Wyant1992}, as follows.
Let $h_z$ be the objective
PSF calculated using \eqref{eq:psf} with Zernike polynomials
in the phase of the pupil function:
\begin{equation}
  h_z(x,y,z; c) = \left|
    \iint p_Z(\kappa_x,\kappa_y; c) e^{
      2 i \pi z 
      \sqrt{(n/\lambda_h)^2 - \kappa_x^2 - \kappa_y^2}
    }
    e^{
      2 i \pi (\kappa_x x + \kappa_y y)
    }
    \dif \kappa_x \dif \kappa_y
  \right|^2,
  \label{eq:psf zernike}
\end{equation}
where $p_z(\kappa_x,\kappa_y; c)$ is the pupil function with Zernike polynomials
in the phase:
\begin{equation}
  p_z(\kappa_x,\kappa_y; c) = \begin{cases}
    e^{2i\pi \sum_{j=1}^{15} c_j Z_j(\kappa_x,\kappa_y)}
      &\text{for} \quad
      \rho = \sqrt{\kappa_x^2 + \kappa_y^2} \leq NA/\lambda_h,\\
      0, \quad &\text{otherwise,}
  \end{cases}
  \label{eq:pupil zernike}
\end{equation}
and $ c = [c_1, \ldots,c_{15}]^T $ are coefficients 
corresponding to the polynomials.
The Zernike polynomials and the corresponding coefficients
that we use are given
in Table \ref{tab:zernike} and 
shown in Figure \ref{fig:zernike wiki}.

Moreover, let $h_{zb}$ be the blurred PSF obtained by
convolving $h_z$ with a Gaussian $g_{\sigma}$ 
with width $\sigma$:
\begin{equation}
  h_{zb}(x,y,z; c, \sigma) 
  = h_z(x,y,z; c) * g_{\sigma}.
  \label{eq:psf zernike blurred}
\end{equation}
This allows us to obtain a better approximation of the
objective PSF.
The parameters $c$ and $\sigma$ are calculated by solving
the least-squares problem
\begin{equation}
  \min_{c,\sigma} \| h_{zb}(c,\sigma) * b - h_{bead}  \|_2^2 
  \quad \text{subject to} \quad
  c \in [-3,3]^{15}, \sigma > 0,
\end{equation}
where $h_{bead}$ is the bead image 
from Figure \ref{fig:psfs data}b 
and $b$ is equal to one inside the sphere of the radius
equal to the radius of the bead (a parameter that is provided)
and zero outside the sphere. This takes into account the
non-negligible size of the beads used to generate the data.

In the implementation of
the fitting procedure, we normalise both the bead image $h_{bead}$
and the simulated PSF $h_{zb}$ by their maximum values
before calculating their error, and
we include two additional parameters, scaling and shift,
to ensure a better fit of the intensity values
(not shown here for simplicity of the presentation).
The resulting PSF is the detection PSF model
\eqref{eq:detection PSF} and is shown 
in Figure \ref{fig:psf h zernike}.

\begin{table}[H]
  \centering
  \begin{tabular}{| l | l l |}
    \hline
      $Z_j$ & Polynomial & $c_j$\\
    \hline 
      $Z_1$ & $\rho\cos\theta$ & -0.7763\\
      $Z_2$ & $\rho\sin\theta$ & -0.0460  \\
      $Z_3$ & $2\rho^2-1$ & -2.3608  \\
      $Z_4$ & $\rho^2\cos2\theta$ & -1.3001 \\
      $Z_5$ & $\rho^2\sin2\theta$ & 0.2024 \\
      $Z_6$ & $(3\rho^2-2)\rho\cos\theta $ & -0.3999 \\
      $Z_7$ & $(3\rho^2-2)\rho\sin\theta $ & 0.0348  \\
      $Z_8$ & $6\rho^4-6\rho^2+1 $ & -1.2112 \\
      $Z_9$ & $\rho^3\cos3\theta $ & -0.1521  \\
      $Z_{10}$ & $\rho^3\sin3\theta $ & -0.0466 \\
      $Z_{11}$ & $(4\rho^2-3)\rho^2\cos2\theta $ & -0.0930  \\
      $Z_{12}$ & $(4\rho^2-3)\rho^2\sin2\theta $ & 0.0427  \\
      $Z_{13}$ & $(10\rho^4-12\rho^2+3)\rho\cos\theta $ & -0.0117  \\
      $Z_{14}$ & $(10\rho^4-12\rho^2+3)\rho\sin\theta $ & -0.0581 \\
      $Z_{15}$ & $20\rho^6-30\rho^4+12\rho^2-1 $ & -0.0633  \\
    \hline
  \end{tabular}
  
  \caption{The first 15 Zernike Polynomials (in polar coordinates)
    and their coefficients used in $h_z$.}
  \label{tab:zernike}
\end{table}

\begin{figure}[H]
  \centering
  \begin{tabular}{c c c c}
    \includegraphics[width=0.1\textwidth]{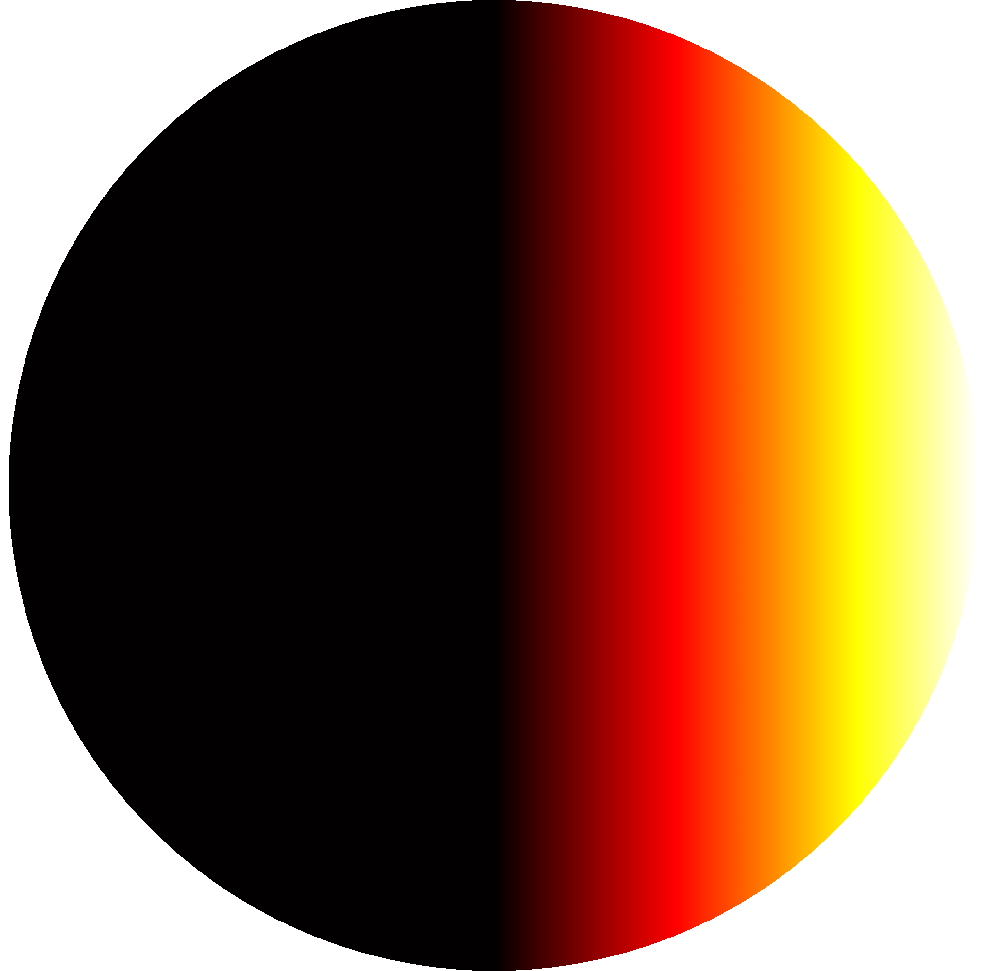}
    &
    \includegraphics[width=0.1\textwidth]{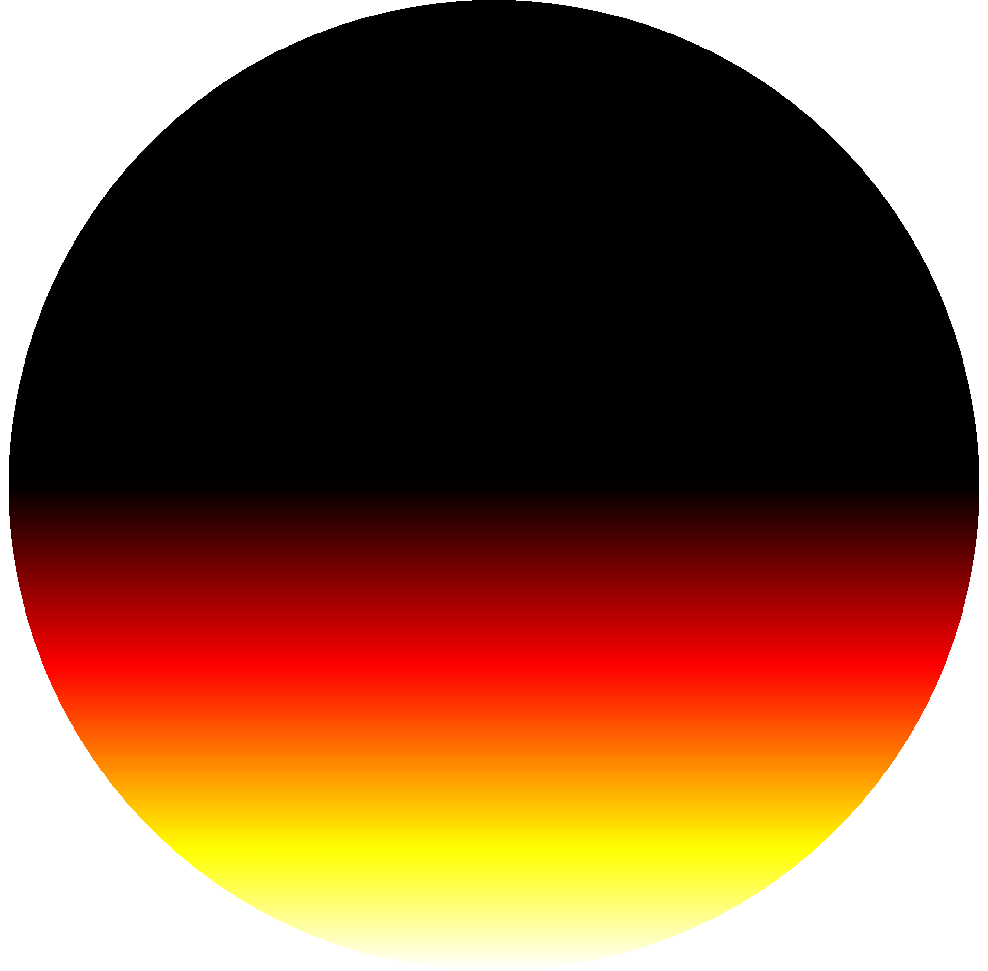}
    &
    \includegraphics[width=0.1\textwidth]{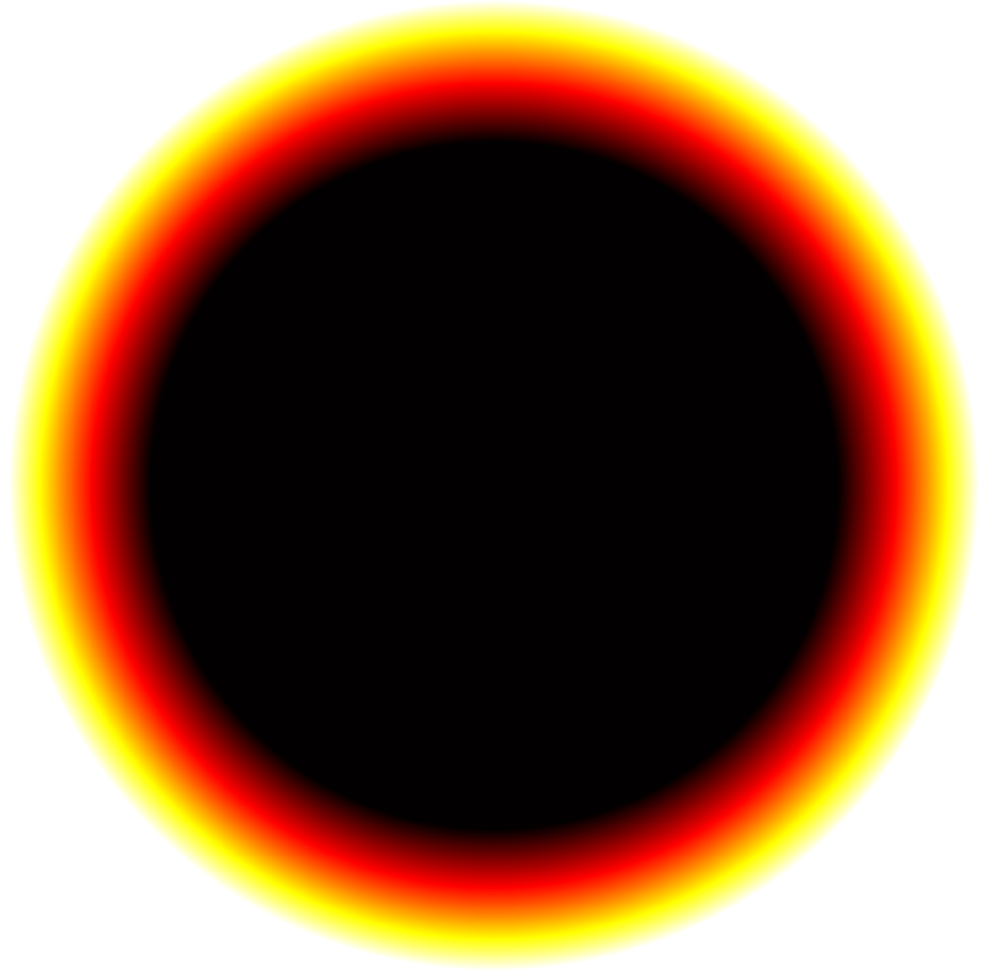}
    &
    \includegraphics[width=0.1\textwidth]{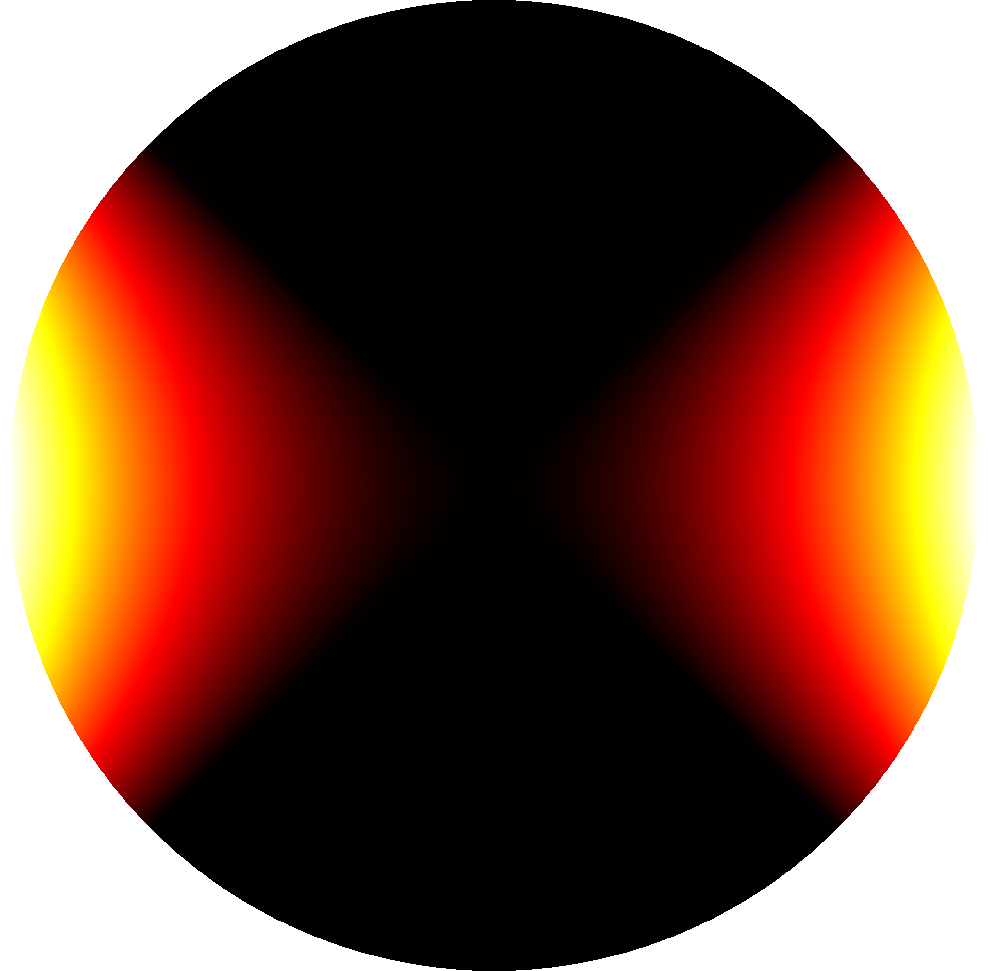}
    \\
    $Z_1$ & $Z_2$ & $Z_3$ & $Z_4$ 
    \\
    \includegraphics[width=0.1\textwidth]{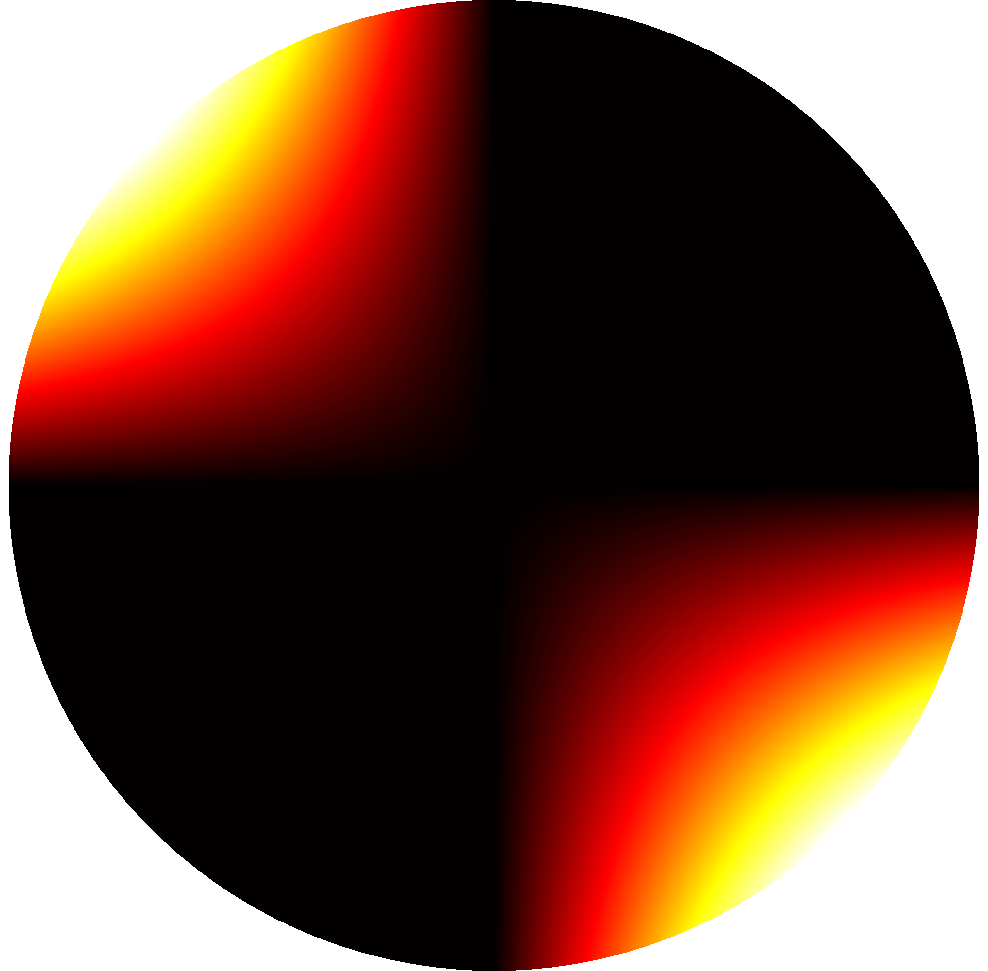}
    &
    \includegraphics[width=0.1\textwidth]{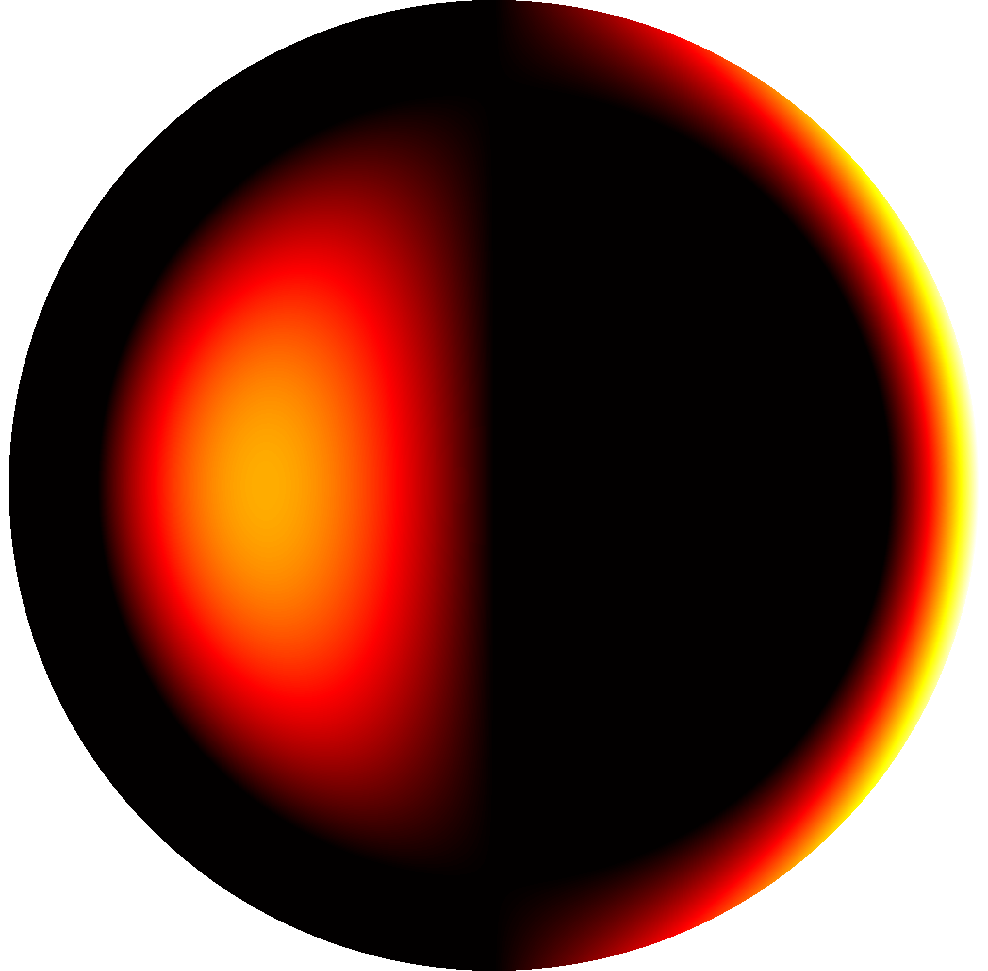}
    &
    \includegraphics[width=0.1\textwidth]{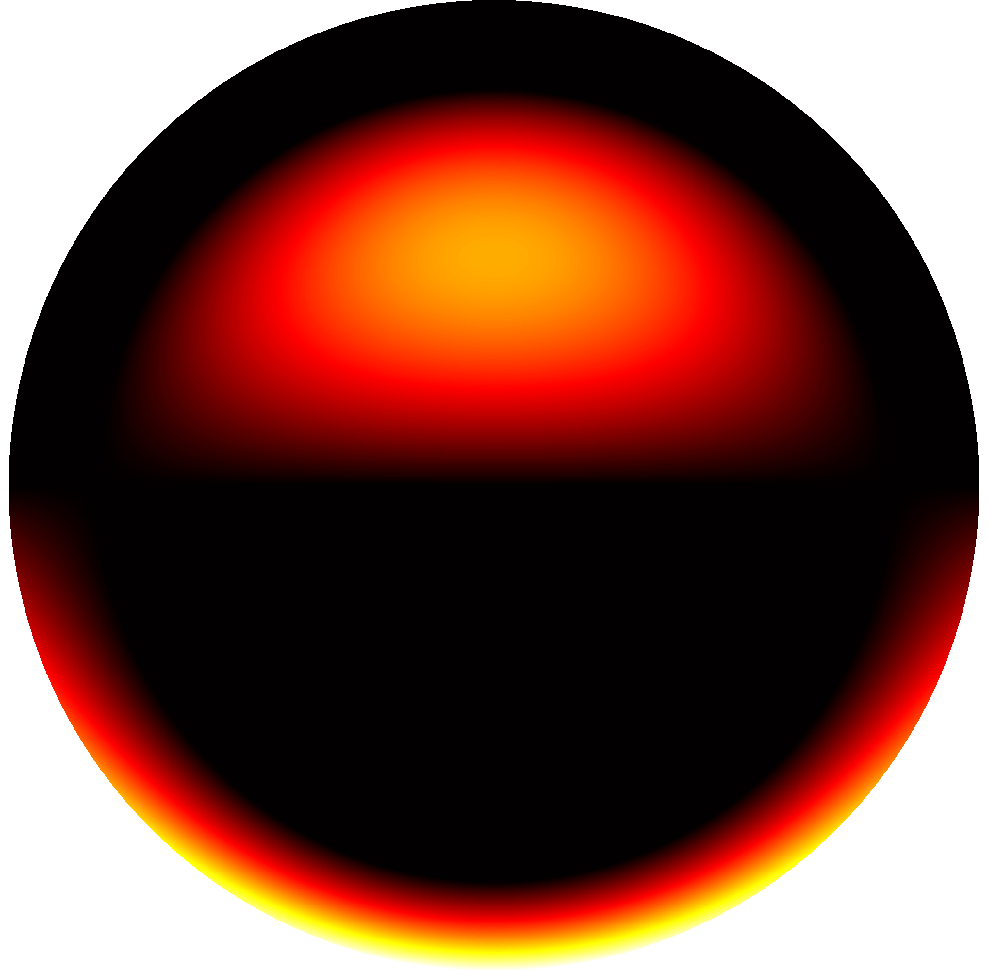}
    &
    \includegraphics[width=0.1\textwidth]{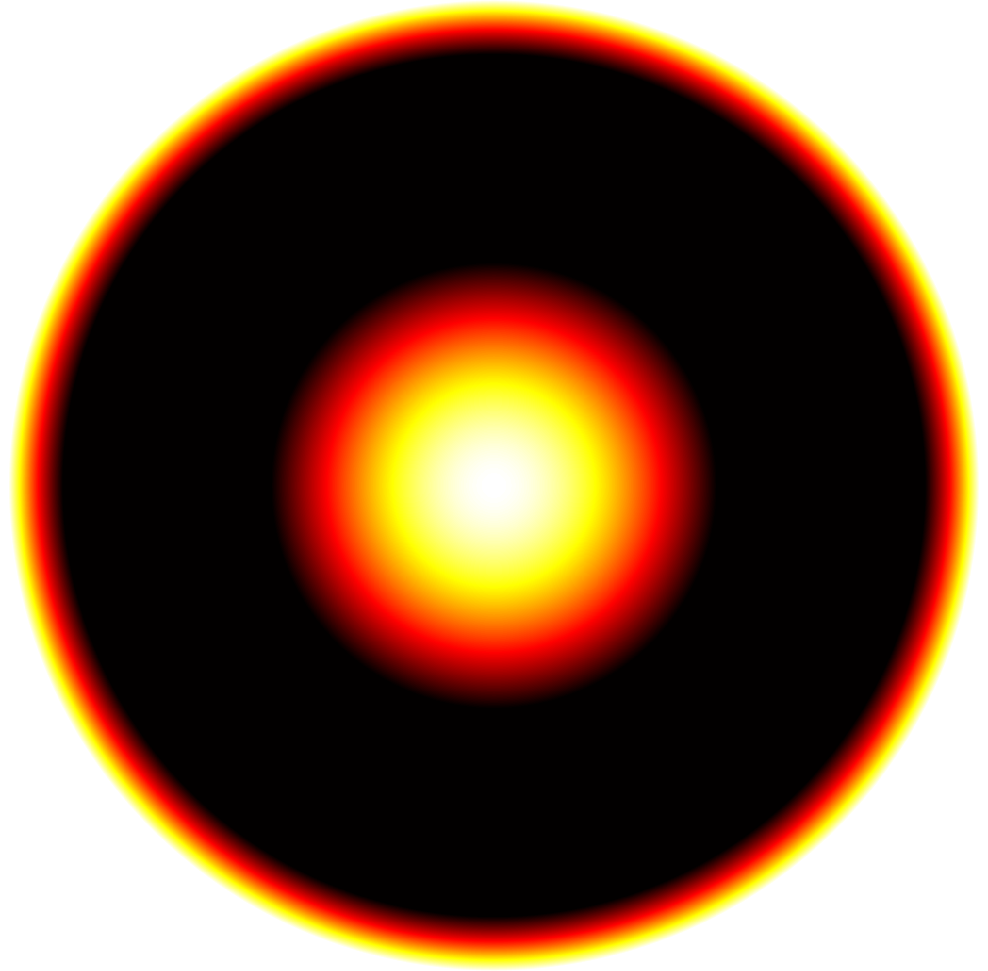}
    \\
    $Z_5$ & $Z_6$ & $Z_7$ & $Z_8$ 
    \\
    \includegraphics[width=0.1\textwidth]{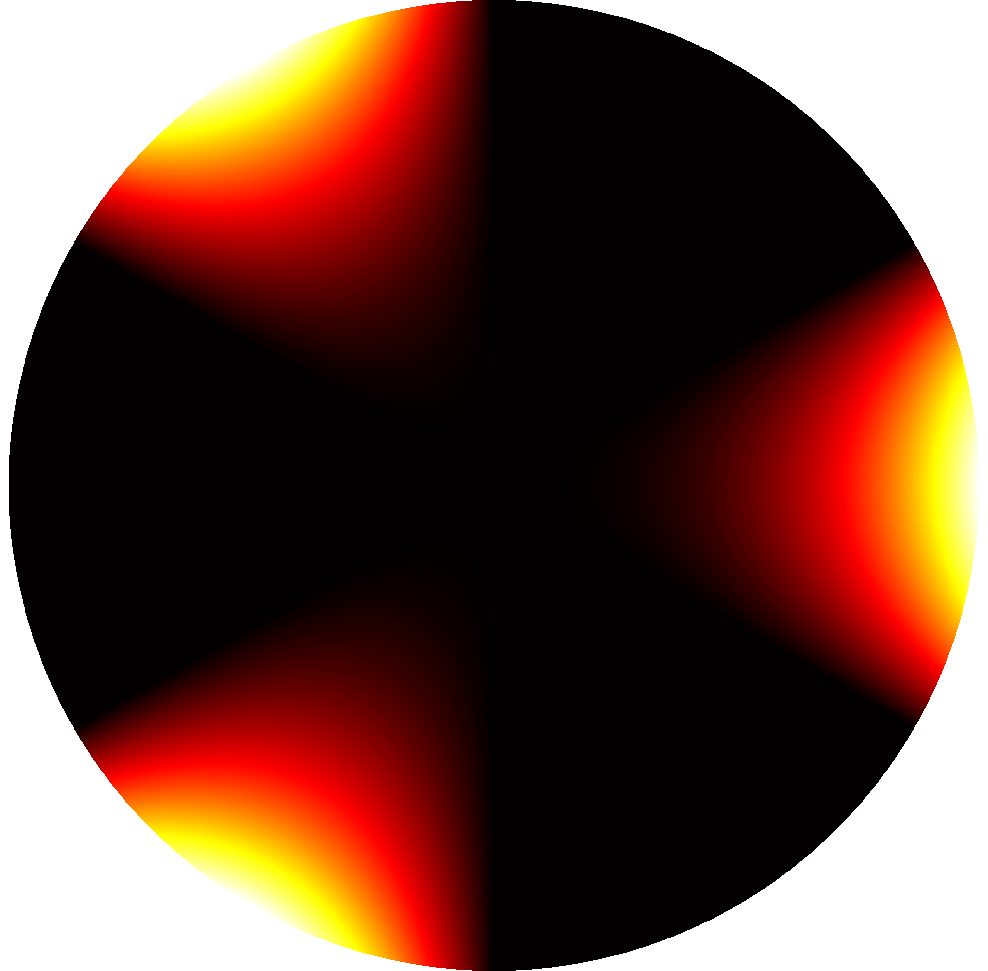}
    &
    \includegraphics[width=0.1\textwidth]{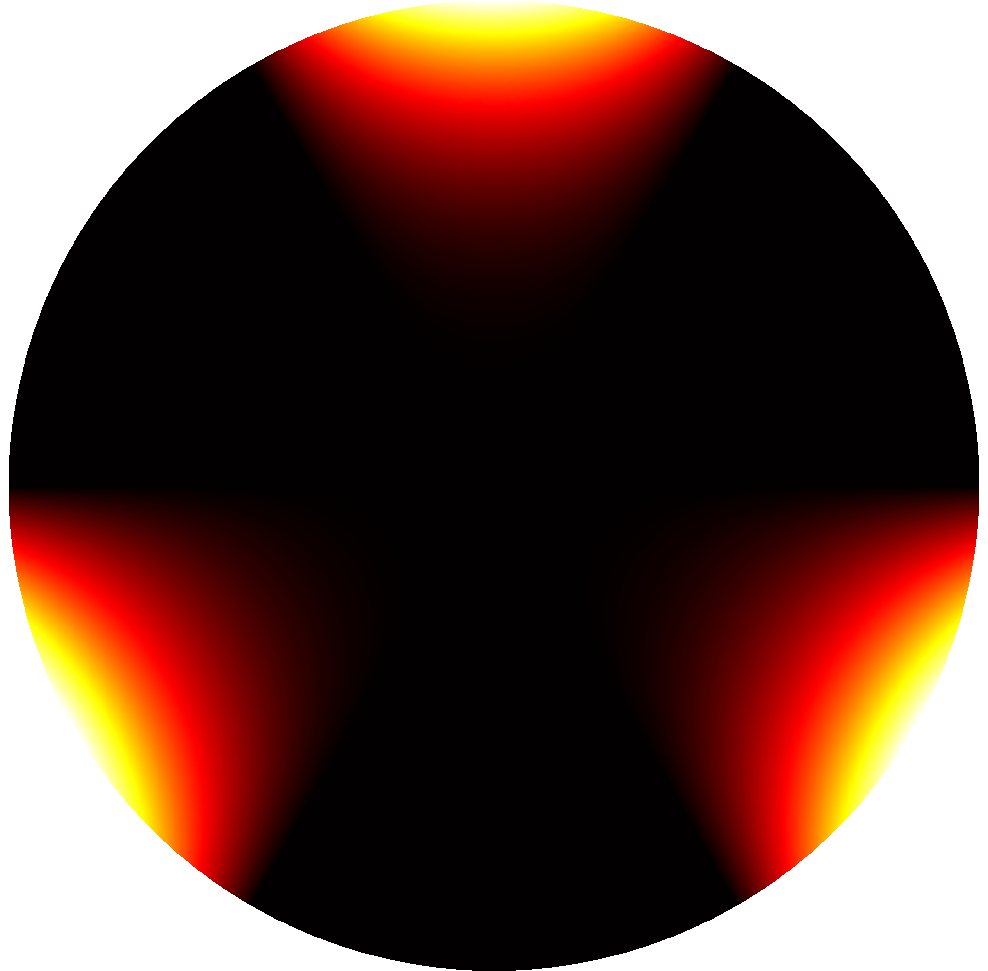}
    &
    \includegraphics[width=0.1\textwidth]{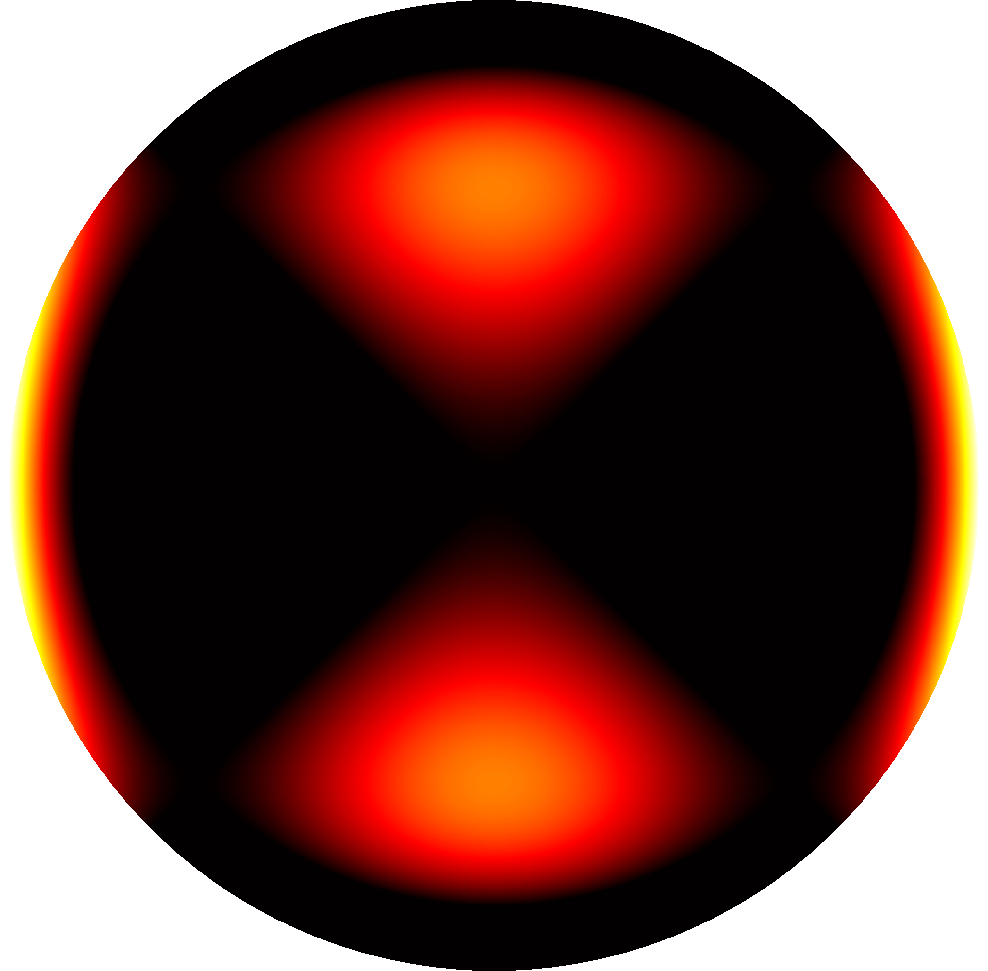}
    &
    \includegraphics[width=0.1\textwidth]{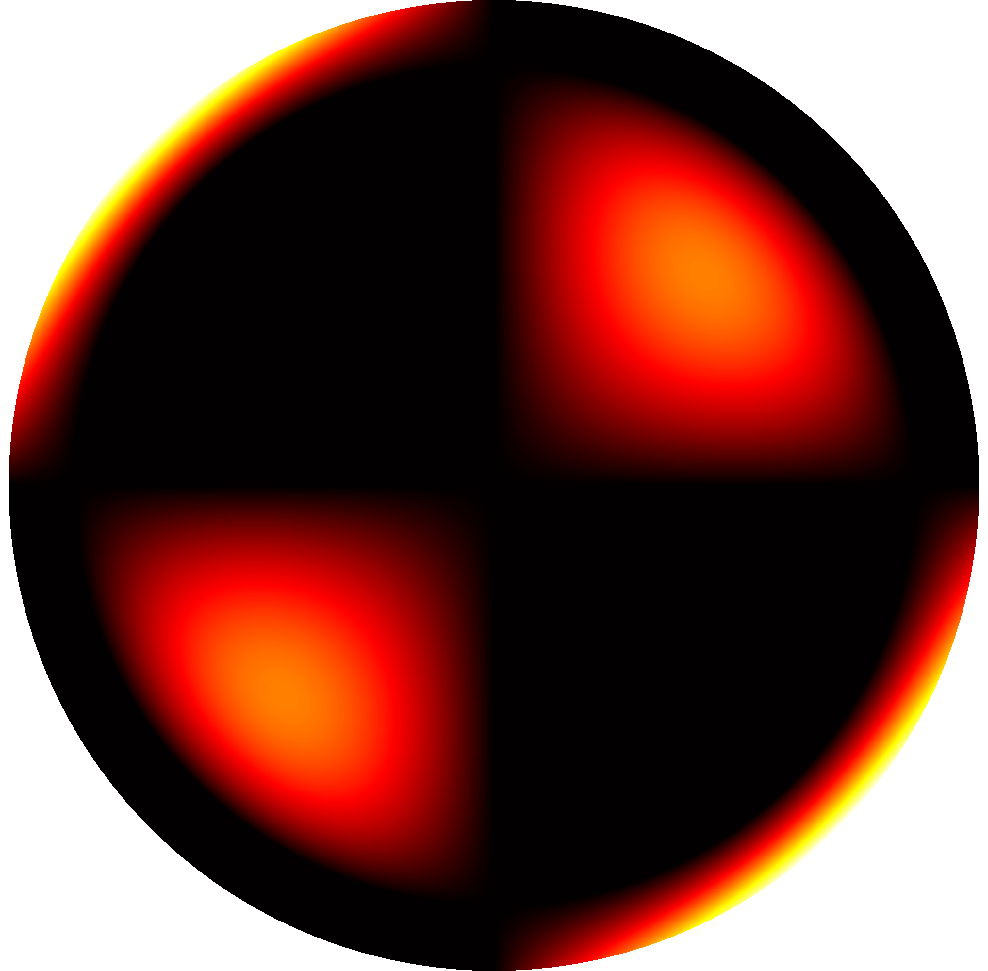}
    \\
    $Z_9$ & $Z_{10}$ & $Z_{11}$ & $Z_{12}$ 
    \\
    \includegraphics[width=0.1\textwidth]{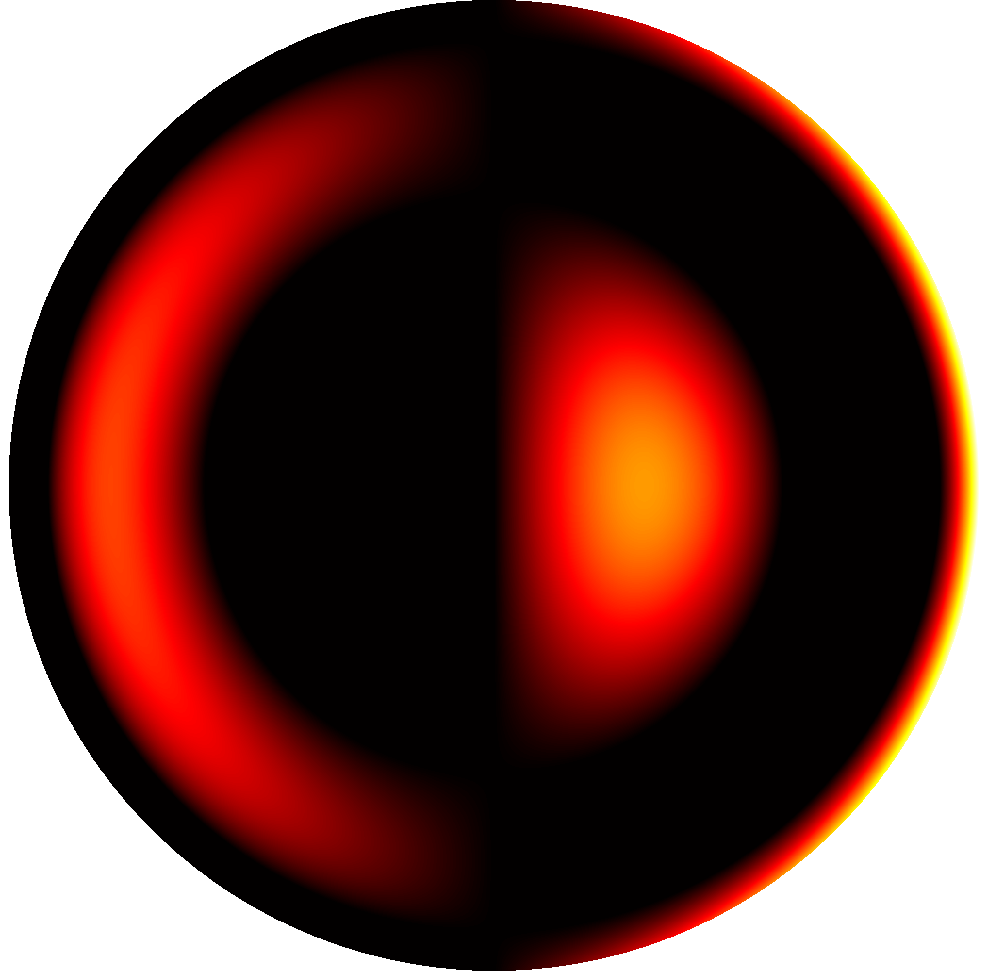}
    &
    \includegraphics[width=0.1\textwidth]{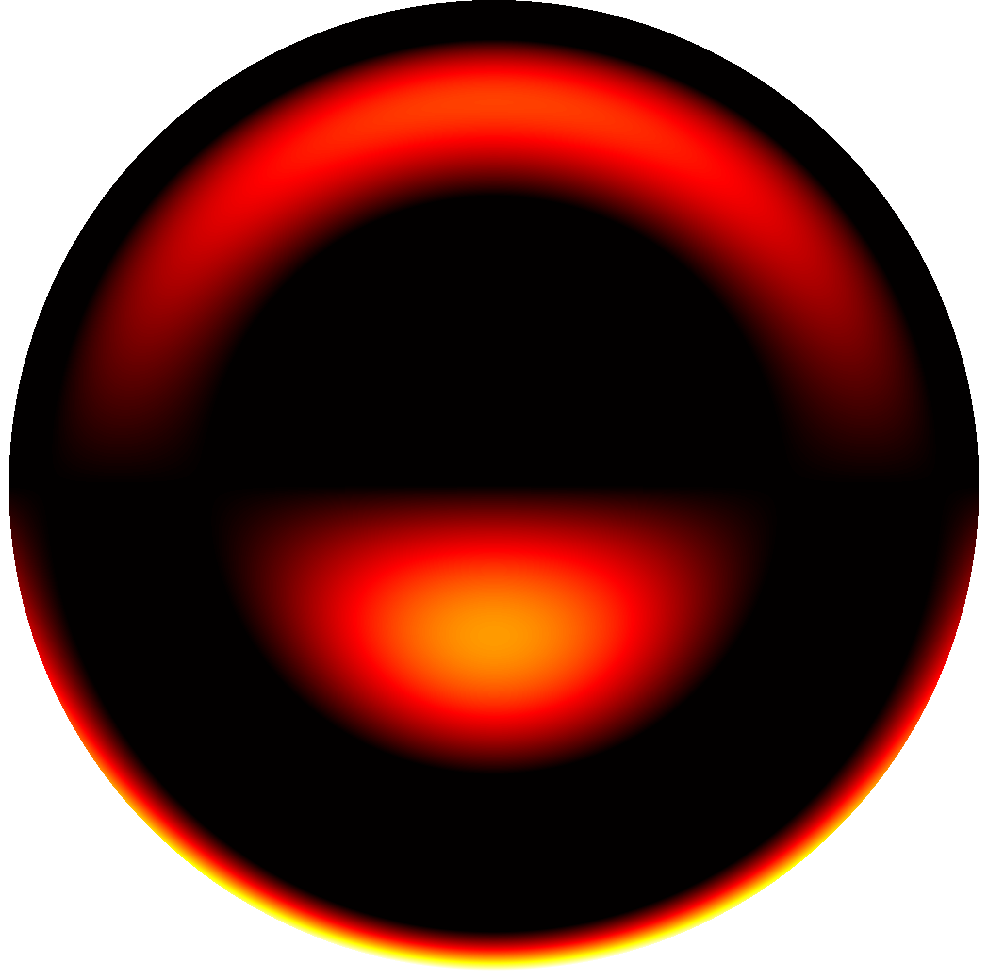}
    &
    \includegraphics[width=0.1\textwidth]{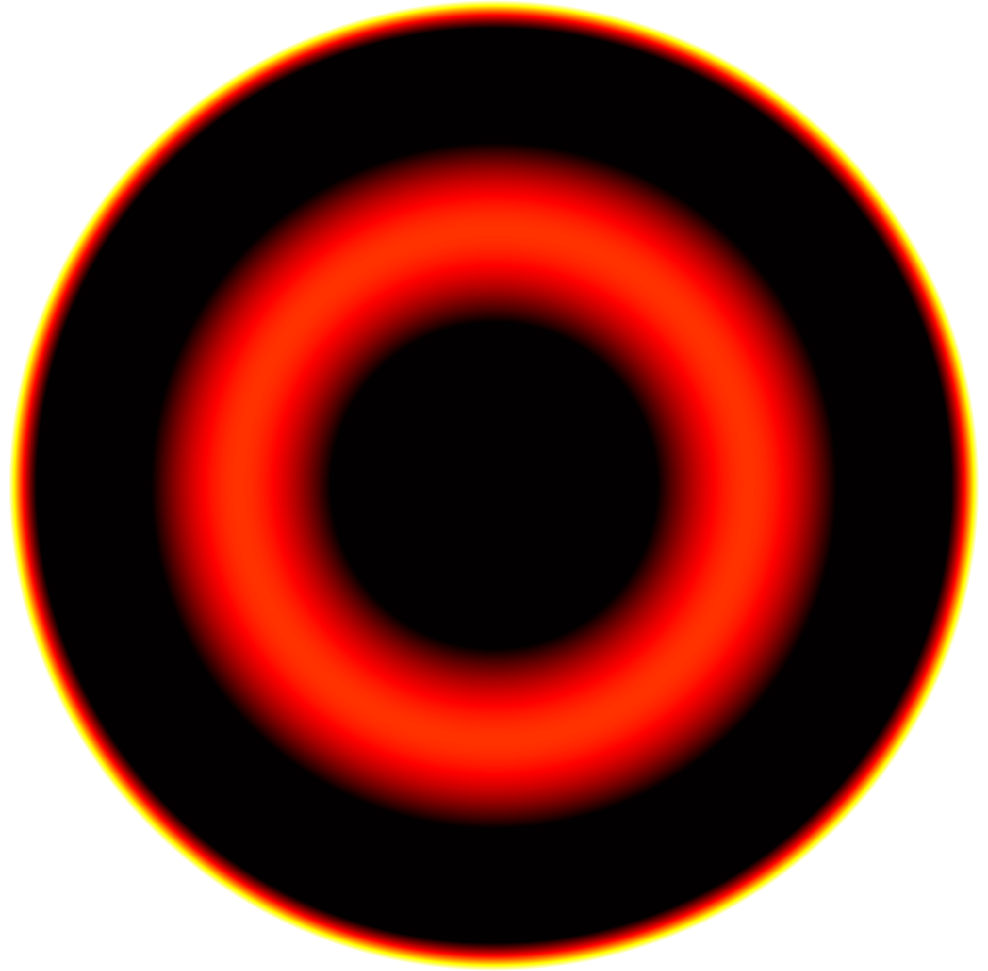}
    &
    \\
    $Z_{13}$ & $Z_{14}$ & $Z_{15}$ & 
    \\
  \end{tabular}
  \caption{The Zernike polynomials used in the PSF $h_z$,
    with image range $[-1,1]$. }
  \label{fig:zernike wiki}
\end{figure}

\begin{figure}[H]
  \begin{minipage}[t]{0.32\textwidth}
    \centering
    \includegraphics[width=\textwidth]{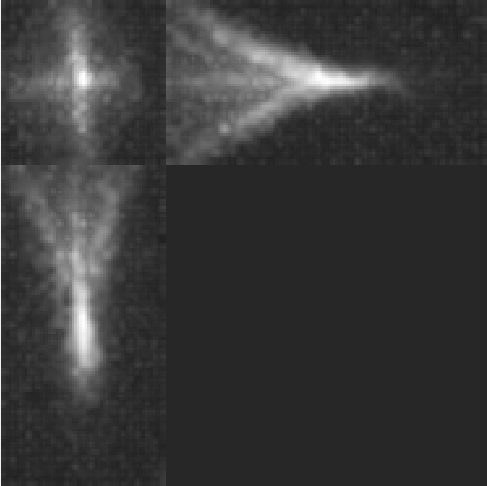}
    (a) Bead image data (maximum intensity projections)
  \end{minipage}
  \begin{minipage}[t]{0.32\textwidth}
    \centering
    \includegraphics[width=\textwidth]{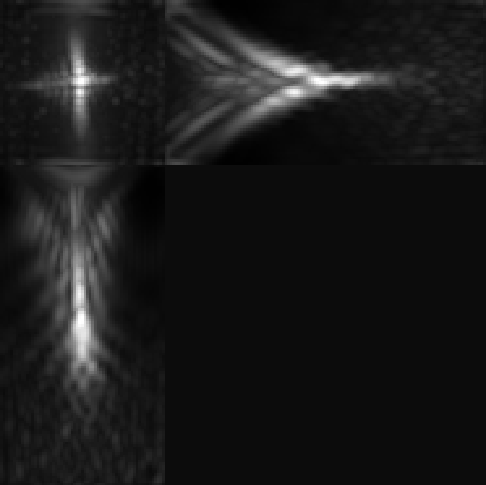}
    (b) Fitted PSF $h_z$, no blur 
    (maximum intensity projections)
  \end{minipage}
  \begin{minipage}[t]{0.32\textwidth}
    \centering
    \includegraphics[width=\textwidth]{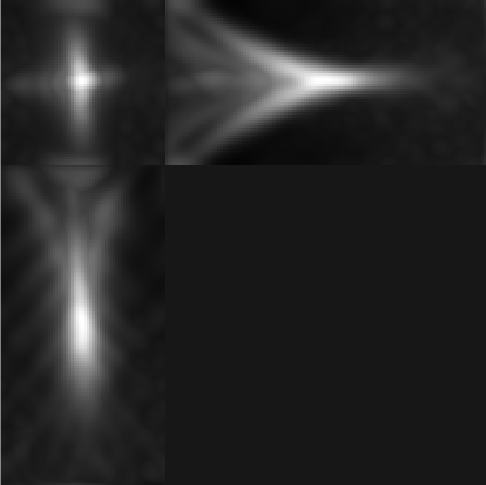}
    (c) Fitted PSF $h$ with Gaussian blur 
    (maximum intensity projections)
  \end{minipage}

  \caption{Fitted PSF using Zernike polynomials. In panel
  (c), we can see the benefits of using the Gaussian
  blur $g_{\sigma}$ in obtaining an accurate approximation
  of the bead in (a).}
  \label{fig:psf h zernike}
\end{figure}

\section{Inverse problem}
\subsection{Problem statement}\label{sec:IP}

In this section we formally state the inverse problem of deblurring a light-sheet microscopy image. 
Let $\Omega \subset \R^3$ 
be a bounded Lipschitz domain and let $L \colon \L^p(\Omega) \to \L^2(\Omega)$ be the forward operator defined by~\eqref{eq:lightsheet model}. Here $1 < p < 3/2$ is chosen such that the embedding of the $\BV$ space is compact~\cite{Burger_Osher_TV_Zoo}. Clearly, $L$ is linear.

We consider the following inverse problem 
\begin{equation}\label{eq:IP}
  Lu = \bar{f},
\end{equation}
where $\bar f \in \L^2(\Omega)$ is the exact (noise-free) data. 
As outlined in Section~\ref{sec:ifm}, the measurements in light microscopy are corrupted by a combination of Poisson and Gaussian noise. More precisely, the measurement is given by $f = v + w$, where $v \sim Pois(\bar{f})$ is a Poisson distributed random variable with mean $\bar f$ and $w$ represents additive zero-mean Gaussian noise. We do not model Gaussian noise statistically and instead, in the spirit of (deterministic) variational regularisation, assume that $w \in \L^2(\Omega)$ is a fixed perturbation with $\norm{w}_{\L^2(\Omega)} \leq \sigma_G$ for some known $\sigma_G > 0$. Poisson noise is typically modelled using the Kullback-Leibler divergence as the data fidelity term~\cite{hohage2013poisson,hohage2016poisson}.

Let us give a brief justification of
the inverse problem formulation described
in this section~\cite{lanza_image_2014,calatroni2017infimal},
from a Bayesian perspective.
First, by using the Poisson and Gaussian probability density
functions, we have that 
$p(v|u) = \frac{(Lu)^v e^{-(Lu)}}{v!}$ and
$p(f|v) = \frac{1}{\sqrt{2\pi}\sigma_G}e^{-\frac{1}{2}\left(\frac{f-v}{\sigma_G}\right)^2}$,
and from Bayes' theorem and conditional probability:
\begin{align}
  p(u,v|f) = \frac{p(f|v)p(v|u)p(u)}{p(f)},
  \label{eq:bayes}
\end{align}
where we used that $p(f|u,v) = p(f|v)$.
Moreover, we assume that the prior is a Gibbs distribution 
$p(u) = e^{-\alpha \reg(u)}$ for a convex functional
$\reg(u)$, which we will introduce later.
To obtain a maximum \textit{a posteriori} estimation
of $u$ and $v$ (i.e. maximise the posterior distribution
$p(u,v|f)$), we take the minimum of the negative log 
of~\eqref{eq:bayes} and, after discarding the
denominator $p(f)$ and using the Stirling
approximation for the factorial $\log v! = v \log v - v$, 
we obtain the minimisation problem:
\begin{equation}
    \argmin_{u,v} 
    \alpha \reg(u) +  
    \frac{1}{2\sigma_G^2} \|f-v\|^2 +
    v \log \frac{v}{Lu} + Lu - v, 
    \label{eq:bayes min}
\end{equation}
where the first term is the regularisation term and the
remaining terms form the data fidelity term.

We will now describe the formal mathematical setting for 
\eqref{eq:bayes min} in the context of variational 
regularisation. This will allow us to show well-posedness
of the model, establish convergence rates of the solution
with respect to the noise in the measurements
and to introduce a discrepancy principle for choosing the value of the
regularisation parameter $\alpha$.

First, note that in \eqref{eq:bayes min},
we can perform the minimisation
over $v$ only on the data fidelity part of the objective,
which can be written as an infimal convolution of the
two separate Gaussian and Poisson fidelities. 
Therefore, we define the following data fidelity
term, as proposed in~\cite{calatroni2017infimal}:
\begin{equation}\label{eq:IC-fid}
    \Phi(\bar f,f) \defeq \inf_{v \in \L^2_+(\Omega)} \left\{ \frac{1}{2}\norm{f-v}^2_{\L^2} + D_{KL}(v, \bar f) \right\}, \quad f \in \L^2(\Omega), \, \bar f \in \L^1_+(\Omega),
\end{equation}
where $\L^{1,2}_+(\Omega)$ denotes the positive cone in $\L^{1,2}(\Omega)$ (that is, functions $f \in \L^{1,2}(\Omega)$ such that $f \geq 0$ a.e.) and $D_{KL}$ denotes the Kullback-Leibler divergence which we define as follows
\begin{align}
  D_{KL}(v,\bar f) :&= 
  \begin{cases}
  \int_{\Omega}\left(
    \bar f(x) - v(x) + v(x) \log \frac{v(x)}{\bar f(x)}
  \right) \dif x, \quad & v, \bar f \geq 0 \, \wedge \, \int_\Omega v \dif x = \int_\Omega \bar f \dif x = 1, \\
  +\infty \quad &\text{otherwise,}
  \end{cases}   \label{eq:dkl continuous} \\
  &= \begin{cases}
  \int_{\Omega}
     v(x) (\log v(x) - \log \bar f(x))
   \dif x, \qquad\quad\; & v, \bar f \geq 0 \, \wedge \, \int_\Omega v \dif x = \int_\Omega \bar f \dif x = 1, \\
  +\infty \qquad\quad\; &\text{otherwise.}
  \end{cases}
  \nonumber
\end{align}
We note that ${\abs{\int_\Omega v(x) \log v(x) \dif x} < \infty}$ for $v \in \L^2$, since $\L^2$ is continuously embedded into the Orlicz space $\L\log\L$ of functions of finite entropy~\cite{clason:2021-OT,bennett-sharpley:1988} 
\begin{equation}\label{eq:LlogL}
    \L\log\L(\Omega) \defeq \{f \in \L^1(\Omega) \colon \int_\Omega \abs{f(x)} (\log\abs{f(x)})_+ \dif x < \infty\},
\end{equation}
where $(\cdot)_+ = \max\{\cdot,0\}$ denotes the positive part.

A proof of the following result can be found in~\cite{calatroni2017infimal}, but we provide it here for readers' convenience.
\begin{prop}[Exactness of the infimal convolution] \label{prop:infconv-exact}
  For any $\bar f \in \L^1_+$ such that $\int_\Omega \bar f \dif x = 1$, there exists a unique solution $v^* = v^*(\bar f)$ of~\eqref{eq:IC-fid}, that is, the infimal convolution is exact. Moreover, the functional $\Phi(\bar f, \cdot) \colon \L^2 \to \R_+\cup\{+\infty\}$ is proper, convex and lower semicontinuous.
\end{prop}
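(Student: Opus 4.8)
The plan is to treat the two assertions separately: first the existence and uniqueness of the minimiser $v^*$ (exactness), then the three structural properties of the value function $f \mapsto \Phi(\bar f, f)$.

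For exactness, fix $\bar f$ and $f$ and write $G(v) \defeq \frac{1}{2}\norm{f - v}^2_{\L^2} + D_{KL}(v, \bar f)$ on the admissible set $\{v \in \L^2_+ \st \int_\Omega v \dif x = 1\}$; outside this set the objective is $+\infty$, so the infimum in~\eqref{eq:IC-fid} is effectively constrained there. I would then apply the direct method. The key observations are: (i) by Gibbs' inequality $D_{KL}(v, \bar f) \geq 0$ whenever $v, \bar f$ are probability densities, so $G(v) \geq \frac{1}{2}\norm{f - v}^2_{\L^2}$, which is coercive on $\L^2$; hence any minimising sequence $(v_n)$ is bounded and, along a subsequence, $v_n \weakto v^*$ weakly in $\L^2$. (ii) The admissible set is convex and strongly closed, hence weakly closed, and the constraint $\int_\Omega v \dif x = 1$ passes to the weak limit by testing against $\one \in \L^2(\Omega)$ (using that $\Omega$ is bounded). (iii) The quadratic term is convex and strongly continuous, hence weakly lsc, and $D_{KL}(\cdot, \bar f)$ is the integral of a convex normal integrand and is therefore weakly sequentially lsc on $\L^2$. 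Combining these, $G(v^*) \leq \liminf_n G(v_n) = \inf G$, so the infimum is attained. Uniqueness is immediate from strict convexity: the quadratic term is strictly convex and $D_{KL}(\cdot, \bar f)$ is convex, so $G$ is strictly convex and has at most one minimiser.

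For the properties of $\Phi(\bar f, \cdot)$, non-negativity of both terms gives $\Phi(\bar f, f) \geq 0$, so it never takes the value $-\infty$. To obtain properness (and in fact finiteness on all of $\L^2$) I would exhibit one admissible competitor with finite divergence: since $\int_\Omega \bar f \dif x = 1$, the level sets $A_\delta = \{\delta \leq \bar f \leq 1/\delta\}$ have positive measure for $\delta$ small, and $v_0 = \abs{A_\delta}^{-1}\one_{A_\delta} \in \L^2_+$ satisfies $\int_\Omega v_0 \dif x = 1$ and $D_{KL}(v_0, \bar f) < \infty$ because $\abs{\log \bar f}$ is bounded on $A_\delta$. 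Then $\Phi(\bar f, f) \leq \frac{1}{2}\norm{f - v_0}^2_{\L^2} + D_{KL}(v_0, \bar f) < \infty$ for every $f$. Convexity follows because $\Phi(\bar f, \cdot)$ is the infimal convolution of the two convex functions $\frac{1}{2}\norm{\cdot}^2_{\L^2}$ and $D_{KL}(\cdot, \bar f)$, equivalently the partial minimisation over $v$ of the jointly convex map $(f,v) \mapsto \frac{1}{2}\norm{f-v}^2_{\L^2} + D_{KL}(v, \bar f)$.

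Finally, for lower semicontinuity I would upgrade to continuity using the same competitor: the bound $\Phi(\bar f, f) \leq \frac{1}{2}\norm{f - v_0}^2_{\L^2} + D_{KL}(v_0, \bar f)$ shows that $\Phi(\bar f, \cdot)$ is bounded above on every bounded subset of $\L^2$. A convex functional bounded above on a neighbourhood of a point is continuous there; since this holds at every point, $\Phi(\bar f, \cdot)$ is continuous on all of $\L^2$ and, in particular, \lsc. I expect the main obstacle to be step (iii): justifying the weak lower semicontinuity of the Kullback--Leibler term under merely weak $\L^2$ convergence, which rests on the lower semicontinuity theory for integral functionals with convex integrands (via Mazur's lemma together with Fatou), and on checking that the normalisation constraint and the finiteness of the entropy term are compatible --- the latter guaranteed by the embedding $\L^2 \hookrightarrow \L\log\L$ noted after~\eqref{eq:LlogL}.
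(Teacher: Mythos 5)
Your proof is correct, and it reaches the conclusion by a genuinely different route than the paper. The paper's proof is essentially a one-step identification: it writes $\Phi(\bar f,\cdot) = \phi \infconv \psi$ with $\phi \defeq \chi_{\L^2_+} + D_{KL}(\cdot,\bar f)$ proper, convex, non-negative and lower semicontinuous, and $\psi \defeq \frac12 \norm{\cdot}_{\L^2}^2$ proper, convex, lower semicontinuous and coercive, and then invokes \cite[Prop. 12.14]{Bauschke_Combettes:2011} to obtain exactness, properness, convexity and lower semicontinuity of the infimal convolution all at once, with uniqueness following from strict convexity of $\psi$. You instead prove exactness by the direct method: coercivity of the quadratic term, weak compactness of bounded sets in $\L^2$, weak closedness of the convex constraint set, and weak sequential lower semicontinuity of the Kullback--Leibler term via Mazur plus Fatou --- legitimate because the pointwise integrand $\bar f - v + v\log(v/\bar f)$ is non-negative --- and you establish the properties of the value function by hand, using the explicit competitor $v_0$ given by the normalised characteristic function of $A_\delta = \{\delta \leq \bar f \leq 1/\delta\}$. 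Both arguments are sound, and there is no circularity in your ordering since the competitor construction does not use exactness. The paper's approach buys brevity by outsourcing the analysis to a standard Hilbert-space result; yours buys self-containedness and in fact proves strictly more than stated: finiteness of $\Phi(\bar f,\cdot)$ on all of $\L^2$ and continuity (a convex function bounded above near every point is continuous), rather than mere properness and lower semicontinuity. This stronger conclusion is consistent with the structural view underlying the paper's citation: $\Phi(\bar f,\cdot)$ is precisely a Moreau-type envelope of $\phi$, hence real-valued and continuous on $\L^2$.
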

\begin{proof}
  Fix $\bar f \in \L^1_+$ such that $\int_\Omega \bar f \dif x = 1$. Then~\eqref{eq:IC-fid} is the infimal convolution of the following two functionals on $\L^2$ 
  \begin{equation*}
    \phi(g) \defeq \chi_{\L^2_+}(g) + D_{KL}(g, \bar f), \quad \psi(g) \defeq \frac{1}{2}\norm{g}^2_{\L^2}, \quad g \in \L^2(\Omega),
  \end{equation*}
  where $\chi$ denotes the characteristic function.  The function $\phi$ is proper, convex, non-negative and lower semicontinuous, while $\psi$ is proper, convex, lower semicontinuous and coercive. Therefore, by~\cite[Prop. 12.14]{Bauschke_Combettes:2011}, the infimal convolution is exact and is itself a proper, convex and lower semicontinuous function. Uniqueness follows from strict convexity of $\psi$.
\end{proof}

Now we turn our attention to the lower semicontinuity of the functional $\Phi(\cdot,f)$ in its first argument.
\begin{prop}[Lower semicontinuity]\label{prop:lsc}
 For any $f \in \L^2_+(\Omega)$ such that $\int_\Omega f \dif x = 1$ the functional $\Phi(\cdot,f) \colon L^1(\Omega) \to \R_+\cup\{+\infty\}$ is lower semicontinuous.
\end{prop}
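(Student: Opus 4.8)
The plan is to establish \emph{sequential} lower semicontinuity with respect to strong convergence in $\L^1(\Omega)$: given $\bar f_n \to \bar f$ in $\L^1(\Omega)$, I want to show $\Phi(\bar f, f) \leq \liminf_{n} \Phi(\bar f_n, f)$. Passing to a subsequence (not relabelled) that realises the $\liminf$, I may assume the limit is finite, since otherwise there is nothing to prove. Because $D_{KL}(\cdot, \bar f_n)$ equals $+\infty$ unless $\int_\Omega \bar f_n \dif x = 1$, finiteness of $\Phi(\bar f_n, f)$ forces $\int_\Omega \bar f_n \dif x = 1$ along this subsequence, and strong $\L^1$ convergence then gives $\int_\Omega \bar f \dif x = 1$, so $\bar f$ is again admissible. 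For each $n$ let $v_n = v^*(\bar f_n) \in \L^2_+(\Omega)$ be the unique minimiser provided by \Cref{prop:infconv-exact}, so that $\Phi(\bar f_n, f) = \tfrac12\norm{f - v_n}^2_{\L^2} + D_{KL}(v_n, \bar f_n)$. Since $D_{KL} \geq 0$, the bound on $\Phi(\bar f_n, f)$ controls $\norm{f - v_n}_{\L^2}$ and hence $\norm{v_n}_{\L^2}$; by reflexivity of $\L^2(\Omega)$ I extract a further subsequence with $v_n \weakto v$ in $\L^2(\Omega)$. As $\L^2_+(\Omega)$ is convex and closed (hence weakly closed) we get $v \geq 0$, and testing against the constant $1 \in \L^2(\Omega)$ (here $\Omega$ is bounded) yields $\int_\Omega v \dif x = \lim_n \int_\Omega v_n \dif x = 1$, so $v$ is a legitimate competitor in the infimum defining $\Phi(\bar f, f)$.

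It then remains to pass to the limit in the two terms. The quadratic term is convex and strongly continuous, hence weakly lower semicontinuous, so $\tfrac12\norm{f-v}^2_{\L^2} \leq \liminf_n \tfrac12\norm{f - v_n}^2_{\L^2}$. The crucial step is the \emph{joint} lower semicontinuity of the Kullback--Leibler divergence under the mixed topology (weak $\L^2$ in the first slot, strong $\L^1$ in the second), namely $D_{KL}(v,\bar f) \leq \liminf_n D_{KL}(v_n,\bar f_n)$. For this I would invoke the Donsker--Varadhan variational representation
\[
  D_{KL}(v,\bar f) = \sup_{\phi \in C_b(\Omega)} \left\{ \int_\Omega \phi\, v \dif x - \log \int_\Omega e^{\phi}\, \bar f \dif x \right\},
\]
valid since $v$ and $\bar f$ are probability densities. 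For each fixed $\phi \in C_b(\Omega)$ the map $v \mapsto \int_\Omega \phi\, v \dif x$ is weakly continuous on $\L^2$ (as $\phi \in \L^\infty(\Omega) \subset \L^2(\Omega)$), while $\bar f \mapsto -\log\int_\Omega e^{\phi}\,\bar f \dif x$ is continuous along strong $\L^1$ convergence (since $e^{\phi} \in \L^\infty(\Omega)$ and the argument, lying in $[e^{\min\phi}, e^{\max\phi}]$, stays bounded away from $0$). Hence each functional inside the supremum converges along our sequences and is therefore a lower bound for $\liminf_n D_{KL}(v_n,\bar f_n)$; taking the supremum over $\phi$ gives the claim.

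Combining the two estimates with the trivial inequality $\Phi(\bar f, f) \leq \tfrac12\norm{f-v}^2_{\L^2} + D_{KL}(v,\bar f)$ then yields $\Phi(\bar f, f) \leq \liminf_n \Phi(\bar f_n, f)$, as required. I expect the main obstacle to be precisely this joint lower semicontinuity of $D_{KL}$: the two arguments converge in different and non-comparable topologies, so one cannot simply combine convexity with weak lower semicontinuity in a single space. The dual Donsker--Varadhan representation reconciles them, rewriting $D_{KL}$ as a supremum of functionals each of which is continuous in the product topology at hand; an alternative route would be a direct Fatou/Young's-inequality estimate on the integrand, but the variational formula keeps the argument cleanest.
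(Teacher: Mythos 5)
Your proof is correct in substance, but it takes a genuinely different route from the paper's. The paper's own argument is a reduction plus citation: using the exactness from \cref{prop:infconv-exact} it writes $\Phi(g,f) = \frac{1}{2}\norm{f-v^*(g)}^2_{\L^2} + D_{KL}(v^*(g),g) + \chi_{\mathcal C}(g)$ with $\mathcal C = \{g \in \L^1_+(\Omega) \colon \int_\Omega g \dif x = 1\}$, observes that $\chi_{\mathcal C}$ is lower semicontinuous because $\mathcal C$ is closed in $\L^1$, and defers the lower semicontinuity of the remaining term to \cite[Thm.~4.1]{calatroni2017infimal}. You instead give a self-contained direct-method proof: sequential lsc suffices because $\L^1$ is metrizable; finiteness along a subsequence realising the liminf forces the constraint; the optimal $v_n = v^*(\bar f_n)$ are bounded in $\L^2$ by nonnegativity of $D_{KL}$; weak compactness gives $v_n \weakto v$ with $v \geq 0$ and $\int_\Omega v \dif x = 1$; the quadratic term is weakly lsc; and the joint lower semicontinuity of $D_{KL}$ under mixed weak-$\L^2$/strong-$\L^1$ convergence comes from the Donsker--Varadhan representation. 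This last step is precisely the analytic content the paper outsources, and your mixed-topology argument is the standard, robust way to obtain it (it in fact yields lsc of relative entropy under mere weak convergence of both measures, which is more than needed here); the price is invoking the Donsker--Varadhan duality, which you should cite explicitly (e.g.\ Dupuis and Ellis, \emph{A Weak Convergence Approach to the Theory of Large Deviations}, Lemma~1.4.3, valid on the metric space $\Omega$ with $C_b$ test functions).

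One small point must be added for completeness: admissibility of the limit and the application of Donsker--Varadhan at the pair $(v,\bar f)$ require $\bar f \geq 0$ a.e., not only $\int_\Omega \bar f \dif x = 1$. This is immediate --- each $\bar f_n$ is nonnegative (again by finiteness of $D_{KL}(v_n,\bar f_n)$), and nonnegativity survives strong $\L^1$ limits, e.g.\ by passing to an a.e.\ convergent subsequence, or equivalently because the cone $\L^1_+$ is closed --- but it needs to be stated, since your variational formula is asserted for probability densities. With that one line added, your argument is complete.
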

\begin{proof}
  We have
  \begin{align*}
      \Phi(g,f) &= \inf_{v \in \L^2_+(\Omega)} \left\{ \frac{1}{2}\norm{f-v}^2_{\L^2} + D_{KL}(v, g) \right\} \\
      &= \frac{1}{2}\norm{f-v^*(g)}^2_{\L^2} + D_{KL}(v^*(g), g)\\
      &= \frac{1}{2}\norm{f-v^*(g)}^2_{\L^2} + \int_{\Omega}
     v(x) (\log v(x) - \log \bar f(x))
   \dif x  + \chi_{\mathcal C}(g), \quad g \in \L^1(\Omega),
  \end{align*}
  where $v^*(g)$ is as defined in \cref{prop:infconv-exact} and $\mathcal C := \{g \in \L^1_+(\Omega) \colon \int_\Omega g \dif x = 1\}$. The characteristic function is lower semicontinuous because $\mathcal C$ is closed in $\L^1$ and the rest is lower semicontinuous by~\cite[Thm. 4.1]{calatroni2017infimal}.
\end{proof}

The following fact is easily established.
\begin{prop}\label{prop:L-cont}
  The operator $L \colon \L^p(\Omega) \to \L^1(\Omega)$ defined in~\eqref{eq:lightsheet model} 
  is continuous for any $ p \geq 1$. Moreover, if $l$ and
  $h$ are non-negative and have overlapping support:
  \begin{equation*}
    \supp(l) \cap \supp(h) \ne \varnothing,
  \end{equation*}
  then $\mathbf{1} \notin \ker{L}$, where $\mathbf{1}$ is the constant one function
  and $\ker{L}$ is the null space of $L$.
\end{prop}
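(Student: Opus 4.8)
The plan is to treat the two assertions separately. For continuity I would use that $L$ is a positivity-preserving integral operator, so it suffices to bound $\norm{Lu}_{\L^1}$ by a constant times $\norm{u}_{\L^1}$ and then invoke the continuous embedding $\L^p(\Omega)\hookrightarrow\L^1(\Omega)$ on the bounded domain $\Omega$. Concretely, I would take absolute values inside \eqref{eq:lightsheet model}, integrate $\abs{Lu}$ over the $(x,y,z)$-domain, and apply Tonelli's theorem (legitimate since the integrand is non-negative) to interchange the order of integration. The inner integral factorises because $h(x-s,y-t,w)$ depends only on $(x,y)$ while $u(s,t,w-z)$ depends only on $z$; integrating $\abs{h}$ in $(x,y)$ and $\abs u$ in $z$ collapses the whole expression to $\big(\sup_w\iint\abs{h(\cdot,\cdot,w)}\big)\,\norm{l}_\infty\,\Omega_z\,\norm{u}_{\L^1}$, up to relabelling the integration variables. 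Since $l,h$ are bounded and $\Omega$ is bounded, the constant is finite, giving $\norm{Lu}_{\L^1}\leq C\norm{u}_{\L^1}\leq C'\norm{u}_{\L^p}$. No step here is delicate.

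For the second assertion I would show the equivalent statement $L\mathbf{1}\neq 0$. Substituting $u\equiv\mathbf 1$ into \eqref{eq:lightsheet model} removes the $z$-dependence and yields
\[
  L\mathbf{1}(x,y,z)=\iiint l(s,t,w)\,h(x-s,y-t,w)\dif s\dif t\dif w,
\]
which is non-negative and independent of $z$. Hence it is enough to exhibit a single $(x_0,y_0)$ at which this integral is strictly positive, and since the integrand is non-negative this reduces to producing a positive-measure set of $(s,t,w)$ on which both $l(s,t,w)>0$ and $h(x_0-s,y_0-t,w)>0$. I would fix a point $(a,b,c)\in\supp(l)\cap\supp(h)$ and use continuity of the PSFs: the sets $\{l>0\}$ and $\{h>0\}$ are open and non-empty arbitrarily close to $(a,b,c)$, and the convolution structure in $(x,y)$ lets me translate the positivity region of $h$ (via $(x_0,y_0)\approx(2a,2b)$) so that it meets that of $l$ on a set of positive measure.

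The main obstacle is the alignment in the $w$-variable (the optical axis), the one direction that is not convolved away: the factors $l(s,t,w)$ and $h(x_0-s,y_0-t,w)$ share the \emph{same} argument $w$, so no choice of $(x_0,y_0)$ helps there. Thus the argument genuinely needs the $z$-projections of $\{l>0\}$ and $\{h>0\}$ to overlap, not merely a shared boundary point of the two supports. A transparent way to isolate this is to integrate in $(x,y)$: by Tonelli,
\[
  \iint L\mathbf{1}(x,y,z)\dif x\dif y=\int\Big(\iint l(s,t,w)\dif s\dif t\Big)\Big(\iint h(\xi,\eta,w)\dif\xi\dif\eta\Big)\dif w,
\]
an integral of a product of two non-negative $w$-profiles, whose positivity follows once they are simultaneously positive on a set of positive $w$-measure. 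For the detection PSF in \eqref{eq:detection PSF} the inner energy $\iint h(\cdot,\cdot,w)$ is in fact constant in $w$ (Parseval, since the defocus factor has unit modulus on the pupil), so this profile is strictly positive for every $w$ and the overlap is automatic as soon as $l\not\equiv 0$, i.e. as soon as $\supp(l)\neq\varnothing$. This is the step I would be most careful to state precisely.
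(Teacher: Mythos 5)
Your continuity argument is essentially the paper's: take absolute values, bound the kernels (the paper uses $\norm{l}_\infty\norm{h}_\infty$ where you use $\norm{l}_\infty$ together with the $(x,y)$-mass of $h$), and convert $\norm{u}_{\L^1}$ into $\norm{u}_{\L^p}$ by H\"older's inequality on the bounded domain; no difference worth noting there. For the second claim you take a genuinely different, and more careful, route. The paper integrates $L\mathbf 1$ over $\Omega$ and bounds the resulting double integral below by $\int_{B_{l,h}} l(x,y,z)\,h(x,y,z)\dif x\dif y\dif z$ for some $B_{l,h}\subset\supp(l)\cap\supp(h)$, declaring this positive. That step is loose on exactly the two points you identified: (i) the double integral involves the shifted product $l(s,t,w)\,h(x-s,y-t,w)$, so dominating a ``diagonal'' integral of the pointwise product $l\,h$ needs an extra argument (continuity of $h$, a neighbourhood of $(x,y)\approx(2s,2t)$, and a check that such points lie in $\Omega$); and (ii) mere non-emptiness of $\supp(l)\cap\supp(h)$ does not suffice: with $l(s,t,w)=(-w)_+$ and $h(x,y,w)=w_+$ the supports meet only in the null set $\{w=0\}$, the product vanishes identically, and $\mathbf 1\in\ker{L}$, so the paper's proof, like its statement, implicitly assumes an overlap of positive measure on which both kernels are positive. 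Your Tonelli factorisation into the two axial profiles $w\mapsto\iint l(\cdot,\cdot,w)$ and $w\mapsto\iint h(\cdot,\cdot,w)$ isolates precisely the non-convolved variable where this can fail, and then discharges the overlap condition from the structure of \eqref{eq:detection PSF} rather than from the support hypothesis. Two caveats on that last step: the pupil $g_\sigma*p_Z$ has Gaussian tails outside the disk $\abs{\kappa}\le n/\lambda_h$, where the defocus factor is no longer unimodular (evanescent components), so Plancherel gives an axial profile that is only bounded below by $\iint_{\abs{\kappa}\le n/\lambda_h}\abs{g_\sigma*p_Z}^2$ rather than exactly constant --- but a uniform positive lower bound is all you need; and your identity requires the $(x,y)$-integration to run over all of $\R^2$, whereas $L$ maps into $\L^1(\Omega)$, so you should either extend by zero and check that the positivity set of $L\mathbf 1$ meets $\Omega$ (true for PSFs centred in the domain), or accept the same truncation looseness present in the paper. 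In exchange for invoking properties of $h$ beyond non-negativity, your argument both repairs the insufficient hypothesis and makes explicit the condition the paper actually needs.
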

\begin{proof}
  By \eqref{eq:lightsheet model}, we have
  \begin{equation*}
    Lu(x,y,z) = \int_{\Omega} l(s,t,w)h(x-s,y-t,w)u(s,t,w-z) \dif \mu_{stw},
  \end{equation*}
  where $\dif \mu_{stw} := \dif s \dif t \dif w$. Noting that 
  the light-sheet PSF $l$ and detection PSF $h$ are bounded 
  from above by some $C_1, C_2 > 0$, we have that:
  \begin{align*}
    \|Lu\|_{\L^1} &=  \int_{\Omega} \left| 
        \int_{\Omega} l(s,t,w)h(x-s,y-t,w)u(s,t,w-z) \dif\mu_{stw} 
    \right| \dif \mu_{xyz} \\
    &=
    \int_{\Omega} \left| 
        \int_{\Omega} l(s,t,w'+z)h(x-s,y-t,w'+z)u(s,t,w') \dif\mu_{stw'} 
    \right| \dif \mu_{xyz}   
    \quad (\text{by } w' = w-z) \\
    &\leq
    C_1 C_2 \int_{\Omega} \left| 
        \int_{\Omega} u(s,t,w') \dif\mu_{stw'} 
    \right| \dif \mu_{xyz} \\
    &= C_1 C_2 \abs{\Omega} \left| 
        \int_{\Omega} u(s,t,w') \dif\mu_{stw'} 
    \right| \\
    &\leq C(p) \norm{u}_{\L^p},
  \end{align*}
  where in the last inequality we applied H\"{o}lder's
  inequality and $C(p)$ is a constant that depends on $p$ 
  (as well as $C_{1,2}$ and $\Omega$). Hence, we obtain 
  the first claim.
  
  For the second claim, 
  we observe that
  \begin{equation*}
    L\mathbf 1(x,y,z) = 
    \int_{\Omega} l(s,t,w)h(x-s,y-t,w) \dif\mu_{stw} \geq 0.
  \end{equation*}
  Consider
  \begin{align*}
    \int_\Omega L\mathbf 1(x,y,z) \dif\mu_{xyz} &= 
    \int_\Omega \int_{\Omega} l(s,t,w)h(x-s,y-t,w) 
    \dif\mu_{stw} \dif\mu_{xyz} 
  \end{align*}
  and let $B_{l,h} \subset \supp{l} \cap \supp{h}$. Then,
  since both $l$ and $h$ are non-negative on $\Omega$, from
  the last equality above we have that:
  \begin{align*}
    \int_\Omega L\mathbf 1(x,y,z) \dif\mu_{xyz} &\geq
    \int_{B_{l,h}} l(x,y,z)h(x,y,z) \dif\mu_{xyz} > 0,
  \end{align*}
  which proves the second claim.
\end{proof}

\begin{remark}
Our setting with the measured data $f \in \L^2(\Omega)$ 
 differs slightly from~\cite{calatroni2017infimal}, where $f \in \L^\infty(\Omega)$ was assumed. 
\end{remark}

We will consider the following variational regularisation problem
\begin{equation}\label{eq:var-reg1}
    \min_{u \in \L^p_+(\Omega)} \Phi(f,Lu) + \alpha \reg(u),
\end{equation}
where $\Phi$ is the infimal-convolution fidelity as defined in~\eqref{eq:IC-fid}, $\reg \colon \L^p \to \R_+\cup\{+\infty\}$ is a regularisation functional,  $\alpha \in \R_+$ is a regularisation parameter and $1 < p < 3/2$. Without loss of generality, we assume that $\int_\Omega \bar f \dif x = 1$.

As the regulariser $\reg$ we choose the total variation~\cite{ROF}
\begin{equation*}
  \reg(u) = \TV(u) \defeq 
  \sup_{\substack{
    \xi \in C_0^{\infty}(\Omega,\mathbb{R}^3) \\ \|\xi\|_{\infty}\leq1
  }}
  \int_\Omega u \div(\xi) \dif x.
\end{equation*}

By the Rellich–Kondrachov theorem, the space 
\begin{equation*}
\BV(\Omega)\defeq\{u \in \L^1(\Omega) \colon \TV(u) < \infty\}, \quad \norm{u}_{\BV} \defeq \norm{u}_{\L^1} + \TV(u),    
\end{equation*}
is compactly embedded into $\L^p(\Omega)$ for $1 \leq p < 3/2$ and continuously embedded into $\L^{3/2}(\Omega)$ since $\Omega \subset \R^3$. Therefore, we consider $\TV \colon \L^p \to \R_+\cup\{+\infty\}$ 
\begin{equation*}
    \TV(u) \defeq
    \begin{cases}
      \sup\limits_{\substack{
    \xi \in C_0^{\infty}(\Omega,\mathbb{R}^3) \\ \|\xi\|_{\infty}\leq1
  }}
  \int_\Omega u \div(\xi) \dif x, \quad & u \in \BV(\Omega), \\
      \infty, \quad & u \in \L^p(\Omega) \setminus { \BV(\Omega)}.
    \end{cases}
\end{equation*}

We will denote by $\Jminsol$ the $\TV$-minimising solution of~\eqref{eq:IP}, i.e. a solution that satisfies
\begin{equation*}
    L \Jminsol = \bar f \text{\quad and \quad $\TV(\Jminsol) \leq \TV(u)$ for all $u$ s.t. $Lu=\bar f$.}
\end{equation*}
The existence of such solution is obtained by standard arguments~\cite{Benning_Burger_modern:2018}. We will make the reasonable assumption that the $\TV$-minimising solution is positive, i.e. $\Jminsol \geq 0$ a.e. Due to the positivity of the kernels involved in~\eqref{eq:lightsheet model}, it is clear that $\Jminsol \geq 0$ implies $L \Jminsol = \bar f \geq 0$.

Since by Proposition~\ref{prop:infconv-exact} the infimal convolution~\eqref{eq:IC-fid} is exact, we can equivalently rewrite~\eqref{eq:var-reg1} as follows
\begin{equation}\label{eq:var-reg2}
    \min_{\substack{u \in \L^p_+(\Omega) \\ v \in \L^2_+(\Omega)}} 
    \frac{1}{2}\norm{f-v}^2_{\L^2} + D_{KL}(v, Lu) + \alpha \reg(u).
\end{equation}

Existence of minimisers in~\eqref{eq:var-reg1} and~\eqref{eq:var-reg2} is obtained by standard arguments~\cite[Thm. 4.1]{calatroni2017infimal}.
\begin{prop}\label{prop:existence}
Each of the opimisation problems~\eqref{eq:var-reg1} and~\eqref{eq:var-reg2} admits a unique minimiser.
\end{prop}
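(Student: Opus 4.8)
The plan is to use the direct method of the calculus of variations for existence and strict convexity for uniqueness. By the exactness of the infimal convolution (\cref{prop:infconv-exact}), problems \eqref{eq:var-reg1} and \eqref{eq:var-reg2} are equivalent: any minimiser $u^*$ of $u \mapsto \Phi(f,Lu) + \alpha\reg(u)$ over $\L^p_+(\Omega)$ yields a joint minimiser $(u^*,v^*)$ of \eqref{eq:var-reg2} via the unique optimal $v^* = v^*(Lu^*)$, and conversely. I would therefore prove existence for the reduced functional and read off the joint statement from exactness.

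For existence, first note the functional is finite at $\Jminsol$ (since $L\Jminsol = \bar f$ with $\int_\Omega \bar f \dif x = 1$, so $D_{KL}$ is finite) and bounded below by $0$; hence $m \defeq \inf_u \{\Phi(f,Lu)+\alpha\reg(u)\} \in [0,\infty)$. Take a minimising sequence $u_n \geq 0$; finiteness of $D_{KL}$ forces $\int_\Omega Lu_n \dif x = 1$, and from $\alpha\TV(u_n) \leq m+1$ we get a uniform bound on $\TV(u_n)$. The crux is to bound $\norm{u_n}_{\L^1}$. I would split $u_n = (u_n - m_n) + m_n$ with $m_n$ the mean of $u_n$: the Poincar\'e--Wirtinger inequality gives $\norm{u_n - m_n}_{\L^{3/2}} \leq C\,\TV(u_n)$, hence a bound in $\L^p$ (as $p<3/2$ and $\Omega$ is bounded) and, by continuity of $L\colon \L^p \to \L^1$ (\cref{prop:L-cont}), $\abs{\int_\Omega L(u_n - m_n)\dif x} \leq C$. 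The normalisation then gives $m_n \int_\Omega L\mathbf 1\,\dif x = 1 - \int_\Omega L(u_n-m_n)\dif x$, which is bounded; since $\int_\Omega L\mathbf 1\,\dif x > 0$ by \cref{prop:L-cont} and $m_n \geq 0$, the means $m_n$ are bounded, so $\norm{u_n}_{\BV}$ is bounded. Compactness of $\BV(\Omega) \hookrightarrow \L^p(\Omega)$ yields (along a subsequence) $u_n \to u^*$ in $\L^p$ and $u_n \wsto u^*$ in $\BV$, with $u^* \geq 0$ and $\int_\Omega Lu^*\dif x = 1$ passing to the limit. Lower semicontinuity of $\TV$ under $\L^p$-convergence, continuity of $L\colon \L^p \to \L^1$, and lower semicontinuity of $\Phi$ in the reconstruction argument (\cref{prop:lsc}) give $\Phi(f,Lu^*) + \alpha\reg(u^*) \leq \liminf_n [\Phi(f,Lu_n)+\alpha\reg(u_n)] = m$, so $u^*$ is a minimiser. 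The main obstacle in this part is precisely the coercivity bound on the mean, which is what makes the hypothesis $\mathbf 1 \notin \ker{L}$ (\cref{prop:L-cont}) essential.

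For uniqueness I would work with \eqref{eq:var-reg2} and use convexity of the set of minimisers. Strict convexity of $v \mapsto \tfrac12\norm{f-v}^2_{\L^2}$ forces the $v$-component to be unique, say $v^*$; then, with $v^*$ fixed, the term $D_{KL}(v^*,Lu)$ contains $-\int v^*\log(Lu)\dif x$, which is strictly convex in $Lu$ on the set $\{v^* > 0\}$ and hence pins down $Lu$ almost everywhere on $\{v^*>0\}$. The step I expect to be hardest is uniqueness of $u$ itself: on $\{v^* = 0\}$ the Kullback--Leibler integrand reduces to the affine quantity $Lu$ and $\reg = \TV$ is merely convex, so nothing there forces $Lu_1 = Lu_2$, and even a.e.\ equality of $Lu$ does not give $u_1 = u_2$ without injectivity of $L$. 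To close the argument I would need both $v^* > 0$ a.e.\ (which I would try to derive from positivity of $f$) \emph{and} injectivity of $L$ on the feasible set $\{u \geq 0 : \int_\Omega Lu\,\dif x = 1\}$ — a property stronger than the $\mathbf 1 \notin \ker{L}$ established so far. This is where the statement is most delicate, and I would either invoke such an injectivity assumption explicitly or restrict the claim to uniqueness of $(v^*, Lu^*)$.
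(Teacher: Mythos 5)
Your existence argument is correct and is, in substance, the proof the paper intends: the paper writes nothing beyond deferring to ``standard arguments''~\cite[Thm. 4.1]{calatroni2017infimal}, and the argument behind that phrase is exactly the direct method you carry out. The ingredients you assemble --- finiteness of the objective at $\Jminsol$, the normalisation $\int_\Omega Lu_n \dif x = 1$ forced by~\eqref{eq:dkl continuous}, the Sobolev--Poincar\'e bound $\norm{u_n - m_n}_{\L^{3/2}} \leq C \TV(u_n)$, control of the means via $\int_\Omega L\mathbf 1 \dif x > 0$ from \cref{prop:L-cont}, compactness of $\BV(\Omega) \hookrightarrow \L^p(\Omega)$ for $p < 3/2$, and lower semicontinuity of $\TV$ and of the fidelity (\cref{prop:infconv-exact,prop:lsc}) --- are precisely the bullet points the paper collects right after this proposition for use in its convergence-rate theorems. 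So on existence your proposal coincides with the paper's (implicit) route, only written out in full, and your identification of $\mathbf 1 \notin \ker{L}$ as the step that makes coercivity work is exactly why \cref{prop:L-cont} is in the paper.

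On uniqueness you have found a genuine gap, but it lies in the paper's claim rather than in your reasoning. Your analysis is the correct one: convexity of the set of minimisers plus strict convexity of $v \mapsto \tfrac12\norm{f-v}^2_{\L^2}$ gives uniqueness of $v^*$; strict convexity of $t \mapsto v^*\log(v^*/t) + t - v^*$ for $t > 0$ then pins down $Lu$ only on $\{v^* > 0\}$; and neither $\TV$ (which is not strictly convex) nor \cref{prop:L-cont} (which gives only $\mathbf 1 \notin \ker{L}$, far weaker than injectivity) can force $u_1 = u_2$ from there. The result cited by the paper is proved in a setting where the operator is essentially the identity, so this issue does not arise there; for the light-sheet operator $L$ the unqualified statement ``admits a unique minimiser'' would require injectivity of $L$ (and positivity of $v^*$ a.e.), neither of which is established anywhere in the paper. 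Note also that the downstream theorems only ever invoke ``a solution'' $u_{\sigma_G,\gamma}$, so your proposed repair --- either assume injectivity of $L$ explicitly, or weaken the conclusion to existence together with uniqueness of $v^*$ and of $Lu^*$ on $\{v^* > 0\}$ --- is exactly what a corrected version of this proposition should say.
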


We will also need the following coercivity result. 
\begin{prop}\label{prop:coercivity}
 The functional $\Phi(f,\cdot) \colon \L^1(\Omega) \to \R_+\cup\{+\infty\}$ is strongly coercive with exponent $2$, i.e. there exists a constant $C>0$ such that
 \begin{equation*}
     \Phi(f,g) \geq C \norm{g-f}_{\L^1}^2, \quad g \in \L^1(\Omega).
 \end{equation*}
\end{prop}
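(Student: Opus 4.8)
The plan is to obtain the estimate $C\norm{g-f}_{\L^1}^2$ as a uniform lower bound for the objective inside the infimum that defines $\Phi(f,\cdot)$, so that the bound is inherited by the infimum itself. Recalling~\eqref{eq:IC-fid}, we have
\begin{equation*}
  \Phi(f,g) = \inf_{v \in \L^2_+(\Omega)} \left\{ \frac{1}{2}\norm{g-v}^2_{\L^2} + D_{KL}(v,f) \right\},
\end{equation*}
so it suffices to show that
\begin{equation*}
  \frac{1}{2}\norm{g-v}^2_{\L^2} + D_{KL}(v,f) \geq C \norm{g-f}_{\L^1}^2 \quad \text{for every } v \in \L^2_+(\Omega),
\end{equation*}
with $C$ independent of $v$ and $g$.

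First I would dispose of the degenerate cases. If $g \notin \L^2(\Omega)$ the $\L^2$ term is $+\infty$, and if $D_{KL}(v,f) = +\infty$ the divergence term is $+\infty$; since $g,f \in \L^1(\Omega)$ make the right-hand side finite, the inequality holds trivially in either case. Hence I may assume $g \in \L^2(\Omega)$ and $D_{KL}(v,f) < \infty$. By the definition~\eqref{eq:dkl continuous} of $D_{KL}$, finiteness forces $v, f \geq 0$ and $\int_\Omega v \dif x = \int_\Omega f \dif x = 1$; that is, $v$ and $f$ are probability densities, which is exactly the regime in which Pinsker's inequality is available.

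The argument then rests on three standard estimates. Pinsker's inequality (with the natural logarithm) yields $D_{KL}(v,f) \geq \tfrac{1}{2}\norm{v-f}_{\L^1}^2$. The continuous embedding $\L^2(\Omega) \hookrightarrow \L^1(\Omega)$ on the bounded domain $\Omega$, i.e.\ Cauchy--Schwarz, yields $\tfrac{1}{2}\norm{g-v}_{\L^2}^2 \geq \tfrac{1}{2\abs{\Omega}}\norm{g-v}_{\L^1}^2$. Finally the triangle inequality combined with $(a+b)^2 \leq 2a^2 + 2b^2$ gives $\norm{g-f}_{\L^1}^2 \leq 2\norm{g-v}_{\L^1}^2 + 2\norm{v-f}_{\L^1}^2$. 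Adding the first two lower bounds and writing $c = \min\{\tfrac{1}{2\abs{\Omega}}, \tfrac{1}{2}\}$, the left-hand side dominates $c\big(\norm{g-v}_{\L^1}^2 + \norm{v-f}_{\L^1}^2\big)$, which by the third estimate is at least $\tfrac{c}{2}\norm{g-f}_{\L^1}^2$. Thus $C = \tfrac{1}{2}\min\{\tfrac{1}{2\abs{\Omega}}, \tfrac{1}{2}\}$ works, and taking the infimum over $v$ completes the proof.

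I expect no serious obstacle here: the only point requiring care is the bookkeeping of finiteness, and in particular noticing that $D_{KL}(v,f) < \infty$ automatically places us among probability densities, so that Pinsker applies without extra hypotheses. Should one wish to avoid citing Pinsker, the bound $D_{KL}(v,f) \geq \tfrac{1}{2}\norm{v-f}_{\L^1}^2$ can be re-derived by a standard pointwise convexity estimate for the integrand of the Kullback--Leibler divergence, but invoking Pinsker keeps the argument short.
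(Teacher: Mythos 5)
Your proof is correct and takes essentially the same route as the paper's: Pinsker's inequality on the Kullback--Leibler term, the embedding $\L^2(\Omega)\hookrightarrow\L^1(\Omega)$ on the quadratic term, and the triangle inequality combined with the elementary bound $(a+b)^2 \leq 2(a^2+b^2)$ to assemble $\norm{g-f}_{\L^1}^2$. The only differences are cosmetic: you work pointwise in $v$ before passing to the infimum, track explicit constants, and dispose of the degenerate infinite cases, details the paper leaves implicit (noting, as you do, that finiteness of $D_{KL}$ enforces the unit-mass normalisation needed for Pinsker).
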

\begin{proof}
Using Pinsker's inequality for the Kullback-Leibler divergence, we get
\begin{align*}
    \Phi(f,g) &= \inf_{v \in \L^2_+} \frac12 \norm{v-f}_{\L^2}^2 + D_{KL}(v,g) \\
    &\geq \inf_{v \in \L^2_+} \frac12 \norm{v-f}_{\L^2}^2 +  \norm{g-v}_{\L^1}^2 \\
    &\geq 2 C\inf_{v \in \L^2_+} \norm{v-f}_{\L^1}^2 +  \norm{g-v}_{\L^1}^2
\end{align*}
for some $C>0$. Note that Pinsker's inequality assumes that $f,g \geq 0$ and $\inf_\Omega f \dif x = \int_\Omega g \dif x = 1$, which we ensure by definition in~\eqref{eq:dkl continuous}.

Now, using the inequality $\frac12(a+b)^2 \leq a^2+b^2$ that holds for all $a,b \in \R$ and the triangle inequality, we obtain the claim
\begin{align*}
    \Phi(f,g) &\geq C\inf_{v \in \L^2_+} \left( \norm{v-f}_{\L^1} +  \norm{g-v}_{\L^1} \right)^2 \\
    &\geq C \inf_{v \in \L^2_+} \norm{v-f+g-v}_{\L^1}^2 \\
    &= C \norm{g-f}_{\L^1}^2.
\end{align*}
\end{proof}



\subsection{Convergence rates}

Our aim in this section is to establish convergence rates of minimisers of~\eqref{eq:var-reg1} as the amount of noise in the data decreases. But first we need to specify what we mean by the amount of noise in our setting.

We argue as follows. Since the noise in the measurement is generated sequentially, i.e.  photo-electrons are first counted by the sensor leading to a Poisson noise and later they are collected by the electronic circuit generating an additive Gaussian noise,
for any exact data $\bar f$ there exists $\bar z \sim Pois(\bar f)$ such that $D_{KL}(\bar z, \bar f) \leq \gamma$, where $\gamma>0$ depends on the exposure time $t$ and vanishes as $t \to \infty$~\cite{hohage2013poisson}. Further, there exists $w \in \L^2(\Omega)$ with $\norm{w}_{\L^2} \leq \sigma_G$ such that $f = \bar z + w$. Since $\bar z \geq 0$ is feasible in~\eqref{eq:IC-fid}, we get the following upper bound on the fidelity term~\eqref{eq:IC-fid} evaluated at the measurement $f$ and the exact data $\bar f$
\begin{equation}\label{eq:delta-def}
    \Phi(\bar f, f) \leq \frac12 \norm{f-\bar z}^2_{\L^2} + D_{KL}(\bar z,\bar f) = \frac12 \norm{w}^2_{\L^2} + D_{KL}(\bar z,\bar f) \leq \frac{\sigma_G^2}{2} + \gamma.
\end{equation}

The standard tool for establishing convergence rates are Bregman distances associated with the regulariser $\reg$. We briefly recall the necessary definitions.

\begin{defn} Let $X$ be a Banach space and $\reg \colon X \to \R_+\cup\{+\infty\}$ a proper convex functional.  The generalised Bregman distance between $x,y \in X$ corresponding to the subgradient $q \in \dJ(y)$ is defined as follows
	\begin{equation*}
	D_\reg^q(x,y) \defeq \reg(x) - \reg(y) - \sp{q,x-y}.
	\end{equation*}
	Here $\dJ(v)$ denotes the subdifferential of $\reg$ at $y \in X$. If, in addition, $p \in \dJ(x)$, the symmetric Bregman distance between $x,y \in X$ corresponding to the subgradients $p,q$ is defined as follows
	\begin{equation*}
	D_\reg^{p,q}(x,y) \defeq D_\reg^q(x,y) + D_\reg^p(y,x) = \sp{p-q,x-y}.
	\end{equation*}
\end{defn}

To obtain convergence rates, an additional assumption on the regularity of the $\TV$-minimising solution, called the~\emph{source condition}, needs to be made. We use the following variant~\cite{Burger_Osher:2004}.
\begin{assumption}[Source condition]\label{ass:sc}
	There exists an element $\mu^\dagger \in L^\infty(\Omega)$ such that
	\begin{equation*}
	q^\dagger := L^*\mu^\dagger \in \dJ(\Jminsol).
	\end{equation*}
\end{assumption}

\subsubsection{Parameter choice rules}

Let us summarise what we know about the fidelity function $\Phi$ as defined in~\eqref{eq:IC-fid}, the regularisation functional $\TV$ and the forward operator $L$:
\begin{itemize}
    \item $\Phi(f,\cdot)$ is proper, convex and coercive (\cref{prop:coercivity}) in $\L^1(\Omega)$;
    \item $\Phi(\cdot,\cdot)$ is jointly convex~\cite{resmerita:2007-KL} and lower semicontinuous (\cref{prop:infconv-exact,prop:lsc});
    \item $\Phi(f,g) = 0$ if and only if $f=g$;
    \item $\TV \colon \L^1(\Omega) \to \R\cup\{+\infty\}$ is proper, convex and lower semicontinuous~\cite{Burger_Osher_TV_Zoo} and its null space is given by $\ker{\TV} = \vecspan\{\mathbf 1\}$, where $\mathbf 1$ denotes the constant one function;
    \item $\TV$ is coercive on the complement of its null space in $\L^1(\Omega)$~\cite{Burger_Osher_TV_Zoo};
    \item $L \colon \L^p(\Omega) \to \L^1(\Omega)$ is continuous and $\ker{\TV}\cap\ker{L} = \{0\}$ (\cref{prop:L-cont}).
\end{itemize}

Using these facts and slightly modifying the proofs from~\cite{bungert:2020-arbitrary}, we obtain the following
\begin{thm}[Convergence rates under a priori parameter choice rules]
Let assumptions made in Section~\ref{sec:IP} hold and let the source condition (\cref{ass:sc}) be satisfied at the $\TV$-minimising solution $\Jminsol$. Let $u_{\sigma_G,\gamma}$ be a solution of~\eqref{eq:var-reg1} and let $\alpha$ be chosen such that
\begin{equation*}
    \alpha(\sigma_G,\gamma) = \bigO(\sigma_G+\sqrt{\gamma}).
\end{equation*}
Then
\begin{equation*}
    D^{q^\dagger}_{\TV} (u_{\sigma_G,\gamma},\Jminsol) = \bigO(\sigma_G+\sqrt{\gamma}),
\end{equation*}
where $q^\dagger = L^*\mu^\dagger$ is the subgradient from~\cref{ass:sc} and $\sigma_G,\gamma>0$ are as defined in~\eqref{eq:delta-def}.
\end{thm}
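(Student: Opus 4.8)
The plan is to run the classical Burger--Osher convergence-rate argument (as in~\cite{bungert:2020-arbitrary}), adapted to the fact that the fidelity $\Phi(f,\cdot)$ is coercive only in the $\L^1$ norm (\cref{prop:coercivity}) rather than in a Hilbert norm, which is precisely why the source element is required to lie in $\L^\infty$. Write $u = u_{\sigma_G,\gamma}$ for brevity. First I would exploit the optimality of $u$ in~\eqref{eq:var-reg1}: since $\Jminsol$ is a competitor and $L\Jminsol = \bar f$, minimality yields
\[
  \Phi(f, Lu) + \alpha\TV(u) \leq \Phi(f, \bar f) + \alpha\TV(\Jminsol).
\]
Subtracting $\alpha\TV(\Jminsol)$, inserting the definition of the Bregman distance, and using the source condition $q^\dagger = L^*\mu^\dagger \in \dJ(\Jminsol)$ from \cref{ass:sc} gives
\[
  \Phi(f, Lu) + \alpha\,\D{\TV}{q^\dagger}(u, \Jminsol) \leq \Phi(f, \bar f) - \alpha\sp{q^\dagger,\, u - \Jminsol}.
\]

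Next I would rewrite the pairing by moving $L$ across the adjoint, $\sp{q^\dagger, u-\Jminsol} = \sp{\mu^\dagger, Lu - \bar f}$, and estimate it by $\L^\infty$--$\L^1$ duality as $\alpha\norm{\mu^\dagger}_{\L^\infty}\norm{Lu-\bar f}_{\L^1}$. The key observation is that this residual is measured in exactly the norm controlled by the coercivity bound, up to the mismatch between $f$ and $\bar f$. Using the triangle inequality $\norm{Lu - \bar f}_{\L^1} \leq \norm{Lu - f}_{\L^1} + \norm{f - \bar f}_{\L^1}$ and Young's inequality, the term $\alpha\norm{\mu^\dagger}_{\L^\infty}\norm{Lu-f}_{\L^1}$ is dominated by $C\norm{Lu-f}_{\L^1}^2 + \tfrac{\alpha^2\norm{\mu^\dagger}_{\L^\infty}^2}{4C} \leq \Phi(f, Lu) + \tfrac{\alpha^2\norm{\mu^\dagger}_{\L^\infty}^2}{4C}$, where the last step is \cref{prop:coercivity}. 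The coercive fidelity on the left then absorbs $\Phi(f,Lu)$, leaving, after dividing by $\alpha$,
\[
  \D{\TV}{q^\dagger}(u,\Jminsol) \leq \frac{\Phi(f,\bar f)}{\alpha} + \frac{\alpha\norm{\mu^\dagger}_{\L^\infty}^2}{4C} + \norm{\mu^\dagger}_{\L^\infty}\norm{f-\bar f}_{\L^1}.
\]

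It then remains to estimate the two data-dependent quantities. The fidelity at the exact data satisfies $\Phi(f,\bar f) \leq \tfrac{\sigma_G^2}{2}+\gamma = \bigO\big((\sigma_G+\sqrt\gamma)^2\big)$ by~\eqref{eq:delta-def}. For the residual I would use the noise splitting $f = \bar z + w$ and bound $\norm{f-\bar f}_{\L^1} \leq \norm{w}_{\L^1} + \norm{\bar z - \bar f}_{\L^1}$: the Gaussian part is controlled by the embedding $\L^2\hookrightarrow\L^1$ on the bounded domain, $\norm{w}_{\L^1}\leq\abs{\Omega}^{1/2}\sigma_G$, and the Poisson part by Pinsker's inequality, $\norm{\bar z-\bar f}_{\L^1}\leq\sqrt{2\,D_{KL}(\bar z,\bar f)}\leq\sqrt{2\gamma}$, so that $\norm{f-\bar f}_{\L^1}=\bigO(\sigma_G+\sqrt\gamma)$. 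Substituting the choice $\alpha = \bigO(\sigma_G+\sqrt\gamma)$ then makes all three terms $\bigO(\sigma_G+\sqrt\gamma)$: the first because $(\sigma_G^2/2+\gamma)/\alpha$ balances a quadratic over a linear quantity, the second directly, and the third since it does not involve $\alpha$. This yields the claimed rate.

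The main obstacle I anticipate is the bookkeeping around the coercivity step: because $\Phi$ is coercive only in $\L^1$ and only against the \emph{noisy} data $f$ (not $\bar f$), one must split off the $\norm{f-\bar f}_{\L^1}$ term \emph{before} applying Young's inequality, and verify that the $\L^\infty$ regularity of $\mu^\dagger$ is exactly what pairs with the $\L^1$ residual to make the Young balance close. A secondary point to check is that the normalisation and non-negativity hypotheses underlying \cref{prop:coercivity} and~\eqref{eq:delta-def} --- namely $\int_\Omega Lu\,\dif x = \int_\Omega f\,\dif x = 1$ and feasibility of the Kullback--Leibler divergence --- are in force for the relevant functions, since this is where Pinsker's inequality and the assumptions from Section~\ref{sec:IP} are genuinely used.
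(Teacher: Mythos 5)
Your proof is correct and takes essentially the same route as the paper: the paper's own proof is a one-line appeal to \cite[Thm.~3.9]{bungert:2020-arbitrary}, and your argument --- optimality plus the source condition, $\L^\infty$--$\L^1$ duality on the pairing $\sp{\mu^\dagger, Lu-\bar f}$, Young's inequality absorbed by the $\L^1$-coercivity of $\Phi$ (Proposition~\ref{prop:coercivity}), and Pinsker/Cauchy--Schwarz to bound $\|f-\bar f\|_{\L^1}$ --- is precisely the ``slight modification'' of that proof which the paper leaves implicit. The only caveat, inherited from the theorem statement itself rather than introduced by you, is that the choice $\alpha = \bigO(\sigma_G+\sqrt{\gamma})$ must in fact be read as $\alpha \asymp \sigma_G+\sqrt{\gamma}$, since the term $\Phi(f,\bar f)/\alpha$ in your final estimate also requires a lower bound on $\alpha$ of that order.
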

\begin{proof}
The proof is similar to~\cite[Thm. 3.9]{bungert:2020-arbitrary}.
\end{proof}

In a similar manner, we can obtain convergence rates for an a posteriori parameter choice rule known as the discrepancy principle~\cite{Morozov:1966,engl:1996,Sixou:2018}. Let $f$ be the noisy data and 
$\delta>0$ the amount of noise such that $\Phi(\bar f, f) \leq \delta$, where $\Phi$ is as defined in \eqref{eq:IC-fid}. In our case, $\delta = \frac{\sigma_G^2}{2} + \gamma$ by \eqref{eq:delta-def}. The discrepancy principle amounts to selecting $\alpha=\alpha(f,\delta)$ such that
\begin{equation}\label{eq:discr_pr1}
    \alpha = \sup\{\alpha>0 \colon \Phi(Lu^\alpha,f) \leq \tau\delta \},
\end{equation}
where $u^\alpha$ is the regularised solution corresponding the the regularisation parameter $\alpha$  and $\tau>1$ is a parameter.

Again, slightly modifying the proofs from~\cite{bungert:2020-arbitrary}, we obtain the following
\begin{thm}[Convergence rates under the discrepancy principle]
Let assumptions made in Section~\ref{sec:IP} hold and let the source condition (\cref{ass:sc}) be satisfied at the $\TV$-minimising solution $\Jminsol$. Let $u_{\sigma_G,\gamma}$ be a solution of~\eqref{eq:var-reg1} with $\alpha$ chosen according to the discrepancy principle \eqref{eq:discr_pr1}. 
Then
\begin{equation*}
    D^{q^\dagger}_{\TV} (u_{\sigma_G,\gamma},\Jminsol) = \bigO(\sigma_G+\sqrt{\gamma}),
\end{equation*}
where $q^\dagger = L^*\mu^\dagger$ is the subgradient from~\cref{ass:sc} and $\sigma_G,\gamma>0$ are as defined in~\eqref{eq:delta-def}.
\end{thm}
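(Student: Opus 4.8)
The plan is to adapt the standard Bregman-distance argument for variational regularisation (following~\cite{bungert:2020-arbitrary}) to the infimal-convolution fidelity $\Phi$, using the coercivity of \cref{prop:coercivity} to control the data misfit in $\L^1$ and the source condition (\cref{ass:sc}) to dispose of the linear part of the Bregman distance. First I would record the fundamental inequality coming from optimality of the regularised solution $u_{\sigma_G,\gamma}$ (abbreviated $u_\alpha$) in~\eqref{eq:var-reg1}. Testing the objective against the $\TV$-minimising solution $\Jminsol$ and using $L\Jminsol = \bar f$ together with the noise bound $\Phi(\bar f, f) \leq \delta$ from~\eqref{eq:delta-def}, with $\delta = \tfrac{\sigma_G^2}{2}+\gamma$, gives
\[
  \Phi(Lu_\alpha, f) + \alpha \TV(u_\alpha) \leq \Phi(\bar f, f) + \alpha \TV(\Jminsol) \leq \delta + \alpha \TV(\Jminsol).
\]
Rearranging and inserting $D^{q^\dagger}_{\TV}(u_\alpha,\Jminsol) = \TV(u_\alpha) - \TV(\Jminsol) - \sp{q^\dagger, u_\alpha - \Jminsol}$ yields
\[
  \Phi(Lu_\alpha, f) + \alpha D^{q^\dagger}_{\TV}(u_\alpha,\Jminsol) \leq \delta - \alpha\sp{q^\dagger, u_\alpha - \Jminsol}.
\]

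Next I would use the source condition $q^\dagger = L^*\mu^\dagger$ with $\mu^\dagger \in \L^\infty$ to move the remaining inner product into the data space, $\sp{q^\dagger, u_\alpha - \Jminsol} = \sp{\mu^\dagger, Lu_\alpha - \bar f}$, so that by H\"older's inequality $\adaptabs{\sp{q^\dagger, u_\alpha - \Jminsol}} \leq \norm{\mu^\dagger}_{\L^\infty}\norm{Lu_\alpha - \bar f}_{\L^1}$. I then split the residual by the triangle inequality, $\norm{Lu_\alpha - \bar f}_{\L^1} \leq \norm{Lu_\alpha - f}_{\L^1} + \norm{f - \bar f}_{\L^1}$, and control each piece by the fact that $\Phi$ dominates the squared $\L^1$-distance between its arguments (\cref{prop:coercivity}): $\norm{Lu_\alpha - f}_{\L^1} \leq \sqrt{\Phi(Lu_\alpha,f)/C}$ and $\norm{f - \bar f}_{\L^1} \leq \sqrt{\Phi(\bar f,f)/C} \leq \sqrt{\delta/C}$.

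Finally I would bring in the discrepancy principle~\eqref{eq:discr_pr1}, which directly gives the upper bound $\Phi(Lu_\alpha, f) \leq \tau\delta$, whence $\norm{Lu_\alpha - f}_{\L^1} \leq \sqrt{\tau\delta/C}$. The decisive point — and the step I expect to be the main obstacle — is to show that the selected parameter \emph{meets} the discrepancy from below, i.e. $\Phi(Lu_\alpha, f) \geq \delta$ (in fact $=\tau\delta$). This requires that $\alpha \mapsto \Phi(Lu_\alpha, f)$ is continuous and monotone and that the supremum in~\eqref{eq:discr_pr1} is attained, which is the technical heart of the discrepancy-principle analysis and rests on the convexity, exactness and lower semicontinuity of $\Phi$ established in \cref{prop:infconv-exact,prop:lsc}. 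Granting this lower bound, the term $\delta - \Phi(Lu_\alpha, f) \leq (1-\tau)\delta \leq 0$ may be discarded, which crucially removes any need for a lower bound on $\alpha$; dividing through by $\alpha$ then leaves
\[
  D^{q^\dagger}_{\TV}(u_\alpha,\Jminsol) \leq \norm{\mu^\dagger}_{\L^\infty}\bigl(\sqrt{\tau}+1\bigr)\sqrt{\delta/C} = \bigO(\sqrt{\delta}) = \bigO(\sigma_G + \sqrt{\gamma}),
\]
since $\sqrt{\delta} = \sqrt{\sigma_G^2/2 + \gamma} = \bigO(\sigma_G + \sqrt{\gamma})$.

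One should also verify that the normalisation hypotheses underlying \cref{prop:coercivity} (whose proof via Pinsker's inequality assumes unit mass of both arguments) apply to the pair $(Lu_\alpha, f)$; this follows from the conventions fixed in Section~\ref{sec:IP}, where we assume without loss of generality that $\int_\Omega \bar f \dif x = 1$. I would treat the a priori rule analogously, replacing the lower discrepancy bound by a Young-type inequality $\alpha\norm{\mu^\dagger}_{\L^\infty}\sqrt{\Phi(Lu_\alpha,f)/C} \leq \tfrac12 \Phi(Lu_\alpha,f) + \tfrac12\alpha^2\norm{\mu^\dagger}^2_{\L^\infty}/C$ to absorb the fidelity term, which is why the a priori choice must balance $\delta/\alpha$ against $\alpha$ through $\alpha \sim \sqrt{\delta}$, whereas the discrepancy principle achieves the same rate without tuning.
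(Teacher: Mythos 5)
Your proposal is correct and follows essentially the same route as the paper, whose proof is simply a pointer to \cite[Thm. 4.10]{bungert:2020-arbitrary}: the optimality inequality tested against $\Jminsol$, the source condition moved into data space via $L$ and H\"older in the $\L^\infty$--$\L^1$ pairing, the Pinsker-based coercivity of \cref{prop:coercivity}, and the lower discrepancy bound used to discard $\delta - \Phi(Lu_\alpha,f) \leq 0$ are exactly the ingredients of that cited argument, adapted to the infimal-convolution fidelity. The one step you grant rather than prove --- that the supremum in \eqref{eq:discr_pr1} is attained with $\Phi(Lu_\alpha,f) \geq \delta$, via monotonicity and continuity of $\alpha \mapsto \Phi(Lu_\alpha,f)$ along the solution path --- is precisely the technical part the paper likewise outsources to the cited work, so your identification of both the argument and its genuine obstacle is accurate.
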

\begin{proof}
The proof is similar to~\cite[Thm. 4.10]{bungert:2020-arbitrary}.
\end{proof}

\section{Solving the minimisation problem}
\label{sec:solving min problem}

\subsection{PDHG for infimal convolution model}

In practice, due to the joint convexity of the 
Kullback-Leibler divergence, we solve the minimisation 
problem \eqref{eq:var-reg2}, where we treat the 
reconstructed sample $u$ and the Gaussian denoised 
image $v$ jointly and, in addition, we impose lower and upper bound 
constraints on $u$ and $v$ by including the corresponding
characteristic functions in the objective:
\begin{equation}
  \min_{u,v} \left\{
    \alpha \TV(u) + 
    \frac{1}{2\sigma_G^2} \|f-v\|_2^2 + D_{KL}(v,Lu) 
    + \chi_{[l_1,l_2]^{2N}}([u, v]^T)
  \right\}.
  \label{eq:min problem impl}
\end{equation}
Note that the objective function in \eqref{eq:min problem impl}
is a sum of convex functions 
(the Kullback-Leibler 
divergence $D_{KL}$ is jointly convex~\cite{lindblad:1973}), and therefore
is itself convex.
We then write the problem \eqref{eq:min problem impl} as:
\begin{equation}
  \min_{w} \left\{ G(w) + \sum_{i=1}^m H_i(L_i w) \right\},
  \label{eq:min problem general}
\end{equation}
where we solve for $w = \begin{bmatrix}u \\ v\end{bmatrix}$, $m=3$ 
and:
\begin{align}
  &G(w) = \chi_{[l_1,l_2]^{2N}}\left(
  \begin{bmatrix}
    u\\v
  \end{bmatrix}\right),\\
  &H_1(\cdot) = \frac{1}{2\sigma_G^2} \left\|
    \cdot - f
  \right\|_2,
  && 
  L_1 = \begin{bmatrix} 0 & 1 \end{bmatrix},\\
  &H_2(w) = D_{KL}(v, u),
  && 
  L_2 = \begin{bmatrix}
    L & 0 \\ 0 & 1 
  \end{bmatrix}, \\
  &H_3(\cdot) = \alpha \left\|  
    \cdot
  \right\|_1,
  &&
  L_3 = \begin{bmatrix}
    \nabla_x & 0 \\ \nabla_y & 0 \\ \nabla_z & 0
  \end{bmatrix},
\end{align}
where $L$ is the forward operator corresponding
to the image formation model from Section~\ref{sec:ifm}.

Rather than solving the problem~\eqref{eq:min problem general}
directly, a common approach is to reformulate it 
as a saddle point problem using the Fenchel conjugate
$G^*(y) = \sup_z \langle z,y \rangle - G(z)$. For proper, convex 
and lower semicontinuous function $G$, we have
that $G^{**} = G$, so \eqref{eq:min problem general}
can be written as the saddle point problem
\begin{equation}
  \min_w \sup_{y_1,\ldots,y_m}
  \left\{
    G(w) + 
    \sum_{i=1}^m \langle y_i, L_i x \rangle - H_i^*(y_i)
  \right\},
  \label{eq:saddle point problem}
\end{equation}
and by swapping the $\min$ and the $\sup$ and applying
the definition of the convex conjugate $G^*$, one obtains
the dual of \eqref{eq:min problem general}:
\begin{equation}
  \max_{y_1,\ldots,y_m} 
  \left\{
    -G^*( -\sum_{i=1}^m L_i^* y_i )
    - \sum_{i=1}^m H_i^*(y_i) 
  \right\}.
  \label{eq:dual}
\end{equation}
The saddle point problem \eqref{eq:saddle point problem} 
is commonly solved using the primal-dual hybrid gradient 
(PDHG) algorithm \cite{esser2010general,chambolle2011,Chambolle2016}, 
and by doing so, both the primal problem \eqref{eq:min problem general}
and the dual \eqref{eq:dual} are solved.
We apply the variant of PDHG from \cite{condat2013primal},
which accounts for the sum of composite terms
terms $H_i \circ L_i$.
Given an initial guess for $(w_0,y_{1,0},\ldots,y_{m,0})$
and the parameters $\sigma, \tau > 0$, 
and $\rho \in [\epsilon,2-\epsilon]$ for some $\epsilon > 0$,
each iteration $k \geq 0$ consists of the following steps:
\begin{align}
  &1.\quad 
    \tilde{w}_{k+1} := \prox_{\tau G}
    (w_k - \tau \sum_{i=1}^m L_i^* y_{i,k}),
    \nonumber \\
  &2.\quad 
    w_{k+1} := \rho_k \tilde{w}_{k+1} + (1-\rho_k) w_k,
    \nonumber \\
  &3.\quad
    \forall i = 1,\ldots,m: \quad
    \tilde{y}_{i,k+1} := \prox_{\sigma H_i^*} \left(
      y_{i,k} + \sigma L_i(2 \tilde{w}_{k+1} - w_k)
    \right),
    \nonumber \\
  &4.\quad
    \forall i = 1,\ldots,m: \quad
    y_{i,k+1} := \rho \tilde{y}_{i,k+1} + (1-\rho)y_{i,k}.
    \label{eq:algorithm}
\end{align}
where for a proper, lower semi-continuous, convex function $G$,
$\prox_{\tau G}$ is its proximal operator, defined as:
\begin{equation}
  \prox_{\tau G}(y) := \argmin_x 
  \left\{  
    \frac{1}{2\tau} \|x - y\|_2^2 + G(x) 
  \right\}.
\end{equation}

The iterates $(w_k)_{k \in \mathbb{N}}$
and $(y_{i,k})_{k \in \mathbb{N}}$ ($i=1,\ldots,m$)
are shown to converge 
if the parameters $\sigma$ and $\tau$ are chosen such that
$ \sigma \tau \| \sum_{i=1}^m L_i^* L_i \| \leq 1$
(see \cite{condat2013primal}, Theorem 5.3).
In step 3 in \eqref{eq:algorithm}, we use Moreau's 
identity to obtain $\prox_{\sigma H^*_i}$ 
from $\prox_{H_i/\sigma}$:
\begin{equation}
  \prox_{\sigma H_i^*}(y) + 
  \sigma \prox_{H_i/{\sigma}} (y/{\sigma})
  = y.
\end{equation}

As a stopping criterion, one can use the primal-dual gap
i.e. the difference between the primal objective cost at the
current iterate and the dual objective cost at the current
(dual) iterate:
\begin{equation}
  D_{pd}(w,y_1,\ldots,y_m) = 
  G(w) + \sum_{i=1}^m H_i(L_i w)
  + G^*(-\sum_{i=1}^m L_i^*y_i)
  + \sum_{i=1}^m H_i^*(y_i)
  \label{eq:pd gap}
\end{equation}
Due to strong duality, optimality is reached when the 
primal-dual gap is zero, so a practical stopping criterion
is when the gap reaches a certain threshold set in advance.

Lastly, note that the optimisation is performed jointly over
both $u$ and $v$, which introduces a difficulty for the 
term $H_2(L_2 w)$ in Step 3 above, as this
requires the proximal operator of the joint Kullback-Leibler
divergence $D_{KL}(u,v)$. Similarly, the computation of
the primal-dual gap in \eqref{eq:pd gap} requires the convex
conjugate of the joint Kullback-Leibler divergence.
We describe the details of these computations in
Section~\ref{sec:prox} and Section~\ref{sec:conv conj}
respectively. 

\subsection{Computing the proximal operator of 
  the joint Kullback–Leibler divergence} 
\label{sec:prox}

When writing the optimisation problem in the 
form~\eqref{eq:min problem general}, it is common that
the functions $G$ and $H_i$ ($i=1,\ldots,m$) are ``simple'',
meaning that their proximity operators have a closed form
solution or can be easily computed with high precision.
This is certainly true for $G$ and $H_1$, but not obvious
for the joint Kullback-Leibler divergence.

First, for discrete images
$u=[u_1,\ldots,u_N]^T,[v_1,\ldots,v_N]^T$, 
the definition \eqref{eq:dkl continuous} becomes:
\begin{equation}
  D_{KL}(v,u)= \sum_{j=1}^N 
  u_j - v_j + v_j \log\frac{v_j}{u_j}
  \label{eq:discrete dkl}
\end{equation}
and then:
\begin{align}
  \prox_{\gamma D_{KL}}(u^*,v^*) 
  &= \argmin_{u,v} \left\{
    D_{KL}(u,v) + \frac{1}{2\gamma}
    \left\|
      \begin{bmatrix} u\\v \end{bmatrix} -
      \begin{bmatrix} u^*\\v^* \end{bmatrix}
    \right\|^2_2
  \right\}
  \nonumber\\
  &= \argmin_{u,v} \left\{
    \sum_{j=1}^N
    u_j - v_j + v_j \log\frac{v_j}{u_j}
    + \frac{1}{2\gamma}
    [(u_j-u_j^*)^2 + (v_j-v_j^*)^2]
  \right\}
  \nonumber\\
  &=
  \sum_{j=1}^N \argmin_{u_j,v_j}
  \Phi(u_j,v_j),
\end{align}
where we define the function $\Phi$ as:
\begin{equation}
  \Phi(u_j,v_j) :=
  u_j - v_j + v_j \log\frac{v_j}{u_j}
  + \frac{1}{2\gamma}
  [(u_j-u_j^*)^2 + (v_j-v_j^*)^2].
\end{equation}
To find the minimiser of $\Phi(u_j,v_j)$, we let its gradient
be equal to zero:
\begin{equation}
  \begin{cases}
    \partial_{u_j} \Phi(u_j,v_j) = 0 \\
    \partial_{v_j} \Phi(u_j,v_j) = 0
  \end{cases}
  \iff
  \begin{cases}
    1 - \frac{v_j}{u_j} + \frac{1}{\gamma}(u_j-u_j^*) = 0\\
    \log v_j - \log u_j + \frac{1}{\gamma}(v_j-v_j^*) = 0
  \end{cases}
\end{equation}
In the second equation, we write $u_j$ as a function 
of $v_j$, which we substitute in the first equation 
to obtain:
\begin{equation}
  \begin{cases}
    1 - e^{-\frac{1}{\gamma}(v_j-v_j^*)} + \frac{1}{\gamma}
    \left(v_j e^{\frac{1}{\gamma}(v_j-v_j^*)} - u_j^*\right)
    = 0
    \\
    u_j = v_j e^{\frac{1}{\gamma}(v_j-v_j^*)}
  \end{cases}
\end{equation}
The first equation is then solved using Newton's method,
where the iteration is given by:
\begin{equation}
  v_j^{(k+1)} = v_j^{(k)} - \frac{
    \gamma - \gamma e^{-\frac{1}{\gamma}(v_j^{(k)}-v_j^*)}
    + v_j^{(k)} e^{\frac{1}{\gamma} (v_j^{(k)} - v_j^*)} 
    - u_j^*
  }{
    e^{-\frac{1}{\gamma}(v_j^{(k)}-v_j^*)} 
    + (1 + \frac{1}{\gamma} v_j^{(k)})
    e^{\frac{1}{\gamma}(v_j^{(k)}-v_j^*)} 
  }.
  \label{eq:newton v}
\end{equation}

\subsection{Computing the convex conjugate of 
  the joint Kullback–Leibler divergence}
\label{sec:conv conj}

We compute the convex conjugate of the discrete joint
Kullback-Leibler divergence $D_{KL}(v,u)$ 
in \eqref{eq:discrete dkl} for $u,v \in [l_1,l_2]^N$:
\begin{align}
  D^*_{KL}(v^*,u^*) &= \sup_{v,u \in [l_1,l_2]^N} \left\{
    \left\langle 
      \begin{bmatrix} u\\v \end{bmatrix},
      \begin{bmatrix} u^*\\v^* \end{bmatrix}
    \right\rangle
    - D_{KL}(v,u)
  \right\}
  \nonumber \\
  &= \sup_{v,u \in [l_1,l_2]^N } \left\{
    \sum_{j=1}^N u_j u_j^* + v_j v_j^*
    - u_j + v_j - v_j \log \frac{v_j}{u_j}
  \right\}
  \nonumber \\
  &= \sum_{j=1}^N \sup_{v_j,u_j \in [l_1,l_2]} \Psi(v_j,u_j),
  \label{eq:dkl psi}
\end{align}
where $\Psi$ is defined as:
\begin{equation}
  \Psi(v_j,u_j) := 
    u_j u_j^* + v_j v_j^*
    - u_j + v_j - v_j \log \frac{v_j}{u_j}.
\end{equation}
To solve the optimisation problem on the last line
in \eqref{eq:dkl psi}, we write the KKT conditions
(where we use $u,v$ instead of $u_j, v_j$ to simplify
the notation:
\begin{align}
  -\nabla \Psi(v,u) 
  &+ \sum_{i=1}^4 \mu_i \nabla g_i(v,u) = 0,
  \label{eq:psi mu g}
  \\
  g_i(v,u) &\leq 0, \quad \forall i=1,\ldots,4,
  \\
  \mu_i &\geq 0, \quad \forall i=1,\ldots,4,
  \\
  \mu_i g_i(v,u) &= 0, \quad \forall i=1,\ldots,4.
  \label{eq:compl}
\end{align}
where the functions $g_i$ correspond to the bound constraints:
\begin{align}
  g_1(v,u) &= u-l_2; \\
  g_2(v,u) &= v-l_2; \\
  g_3(v,u) &= -u+l_1; \\
  g_4(v,u) &= -v+l_1; \\
\end{align}
Noting that \eqref{eq:psi mu g} is equivalent to:
\begin{align}
  &-u^* + 1 - \frac{v}{u}  + \mu_1 - \mu_3 = 0, 
  \\
  &-v^* + \log v - \log u + \mu_2 - \mu_4 = 0,
\end{align}
we solve the last two equations by using the 
complementarity conditions \eqref{eq:compl} for 
cases when the Lagrange multipliers $\mu_i$ are 
zero or non-zero.

\section{Numerical results}
\label{sec:numerics}

In this section, we describe a number of numerical experiments
that illustrate the performance of our deconvolution method.
We start with four examples of simulated data, where we are 
able to quantify the reconstructed image in relation to the
known ground truth image. Then, we show how our method performs
on microscopy data, where we reconstruct an image of spherical
beads and a sample of a Marchantia thallus.

  \subsection{Simulated data}

We consider four images of size $128\times125\times64$: 
a $5 \times 5 \times 5$ grid of beads where the 
effect of the light-sheet in the $z$ coordinate and the
shape of the objective PSF are noticeable, 
a piecewise constant image of ``steps''
where the Poisson noise affects each step differently 
based on intensity, and an image that replicates a 
realistic biological samples of tissue.
These are shown in the top row of Figure~\ref{fig:simulated data}.

To obtain the measured data, we proceed as follows. Given the
ground truth image $u_0$, the forward operator described in
Section~\ref{sec:ifm} is applied to obtain the blurred image
$Lu_0$. The parameters for the forward model are taken to be
those of the microscope used in the experimental setup, and
are given in Table~\ref{table:microscope params}.
Then, the image corrupted with a mixture 
of Poisson and Gaussian noise. For the vectorised image
$Lu_0$, at each pixel $i=1,\ldots,N$, the Poisson noise
component follows the Poisson distribution with 
parameter $(Lu_0)_i$ and the additive Gaussian component
has zero mean and standard deviation $\sigma_G=10$.
The original image, which has intensity in $[0,1]$
is scaled so that the intensity of $Lu_0$ is 
in $[0,2000]$, to replicate a realistic scenario
for the Poisson noise intensity. The resulting simulated
measured data is shown in the bottom row of 
Figure~\ref{fig:simulated data}.

\begin{table}[h]
  \centering
   \begin{tabular}{| l | l | l |} 
     \hline
     Parameter & Value & Description, units \\
     \hline
     $n$ & $1.35$ & refractive index \\
     $NA_h$ & $1$ & numerical aperture (objective lens) \\
     $NA_l$ & $0.25$ & numerical aperture (light-sheet) \\
     $\lambda_h$ & $0.525$ & wave length (objective lens), $\mu m$ \\
     $\lambda_l$ & $0.488$ & wave length (light-sheet), $\mu m$ \\
     $px_x$ & $0.3250$ & pixel size ($x$), $\mu m$ \\
     $px_y$ & $0.3250$ & pixel size ($y$), $\mu m$ \\
     $step_z$ & $1$ & light-sheet step size ($z$), $\mu m$ \\
     \hline
   \end{tabular}
   \caption{Forward model parameters used 
     in Section~\ref{sec:numerics}.}
   \label{table:microscope params}
\end{table}

\begin{figure}[h!]
  \centering
  \includegraphics[width=0.3\textwidth]{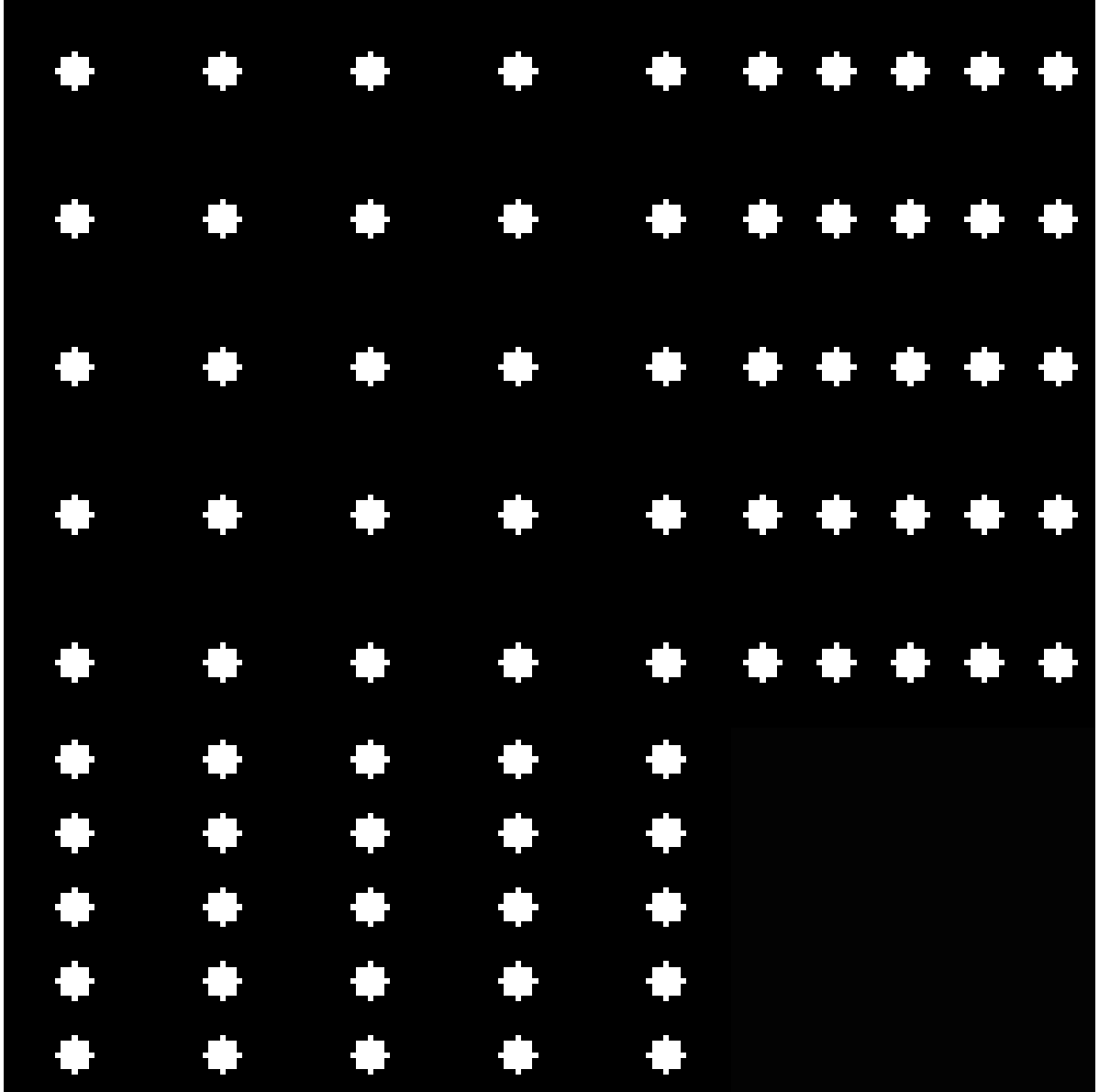}
  \includegraphics[width=0.3\textwidth]{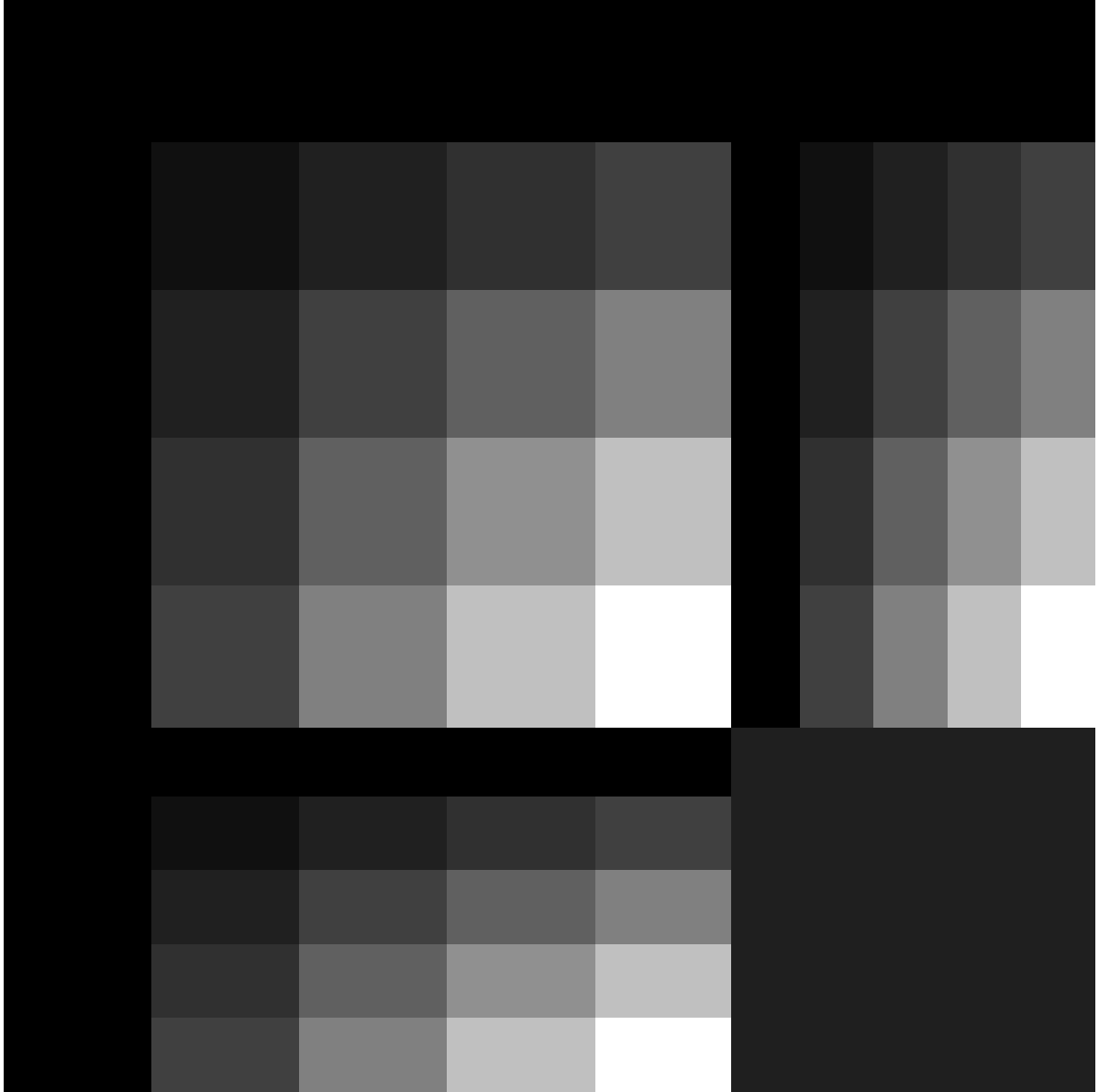}
  \includegraphics[width=0.3\textwidth]{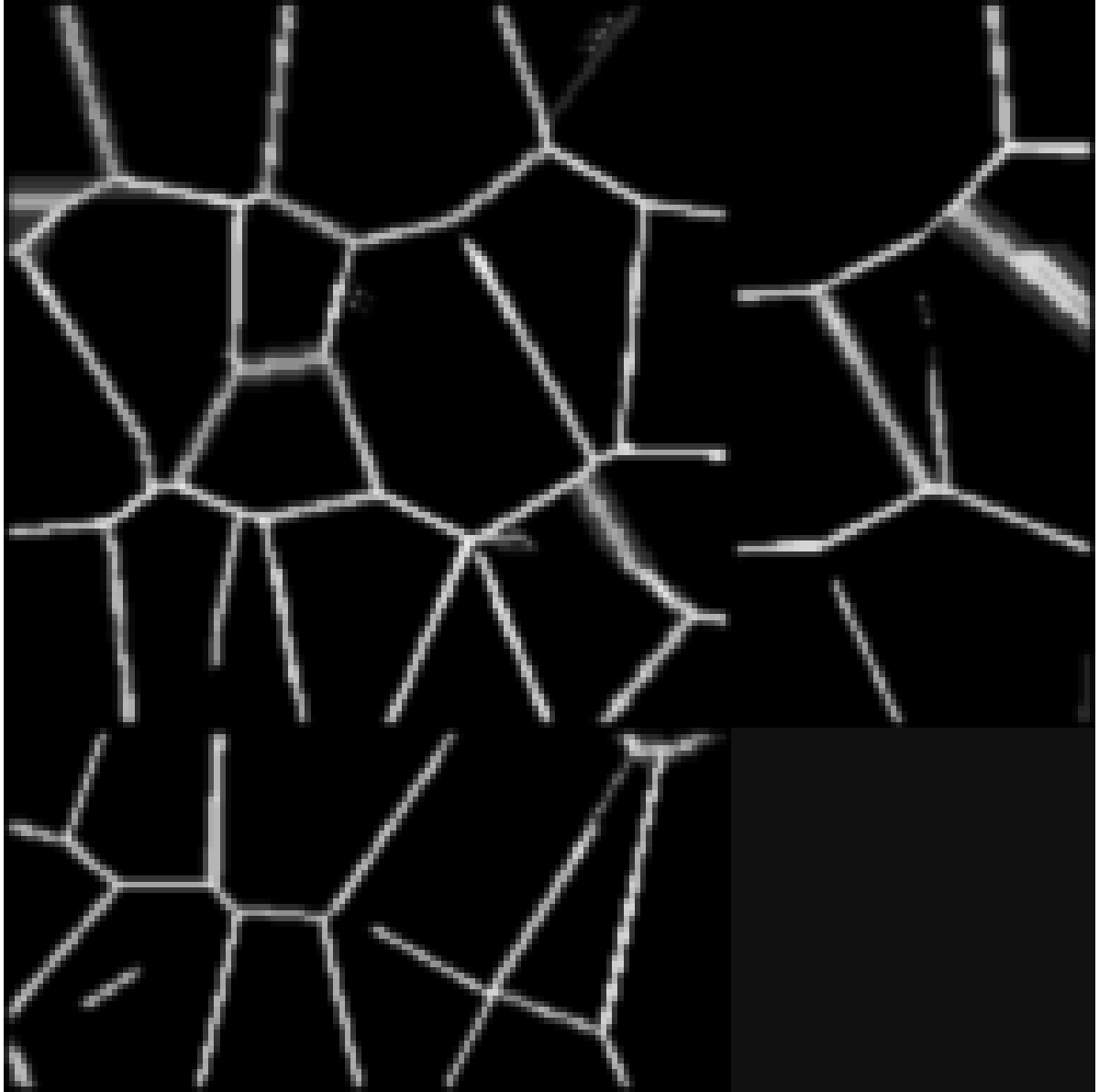}
  \includegraphics[width=0.3\textwidth]{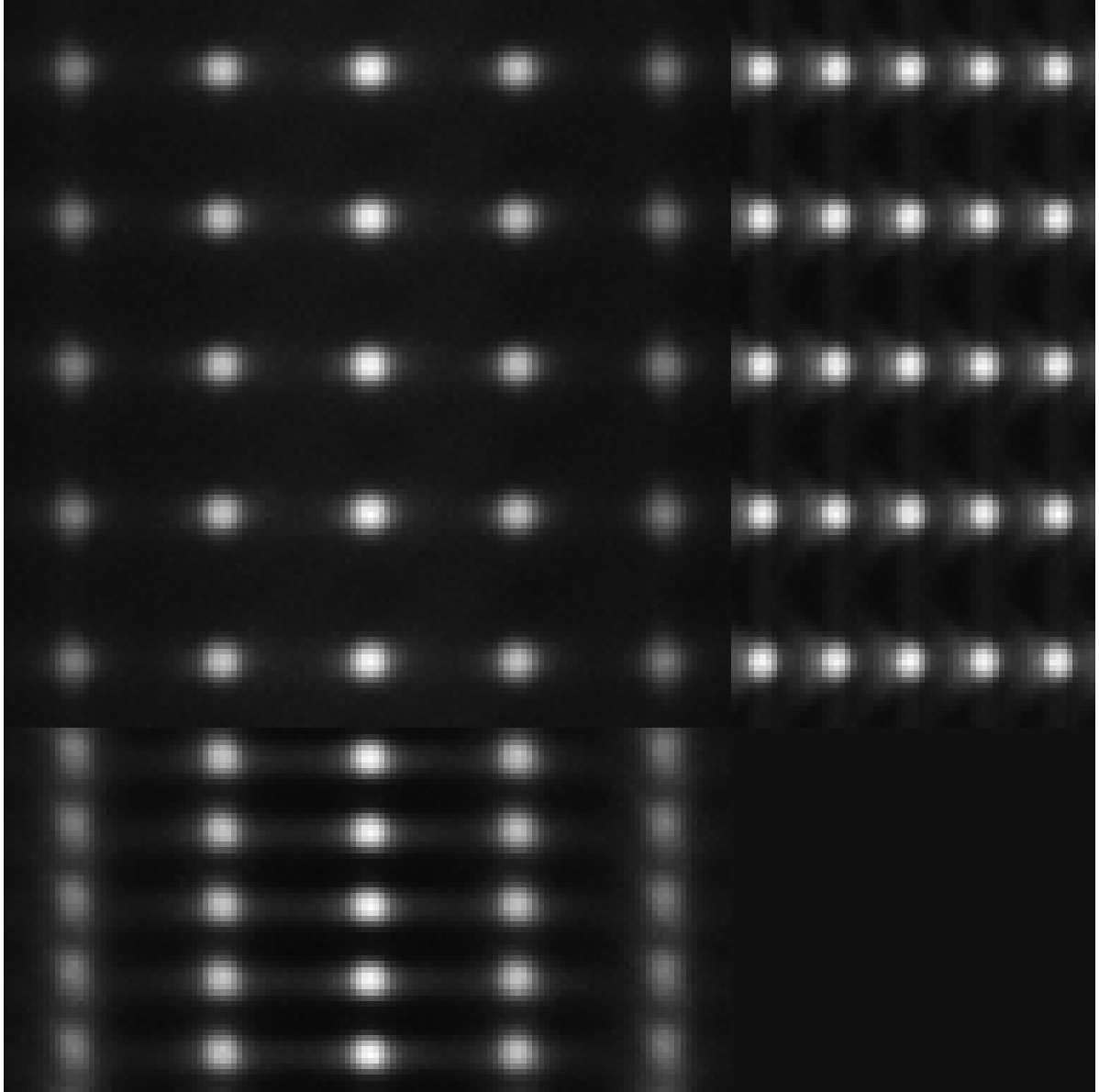}
  \includegraphics[width=0.3\textwidth]{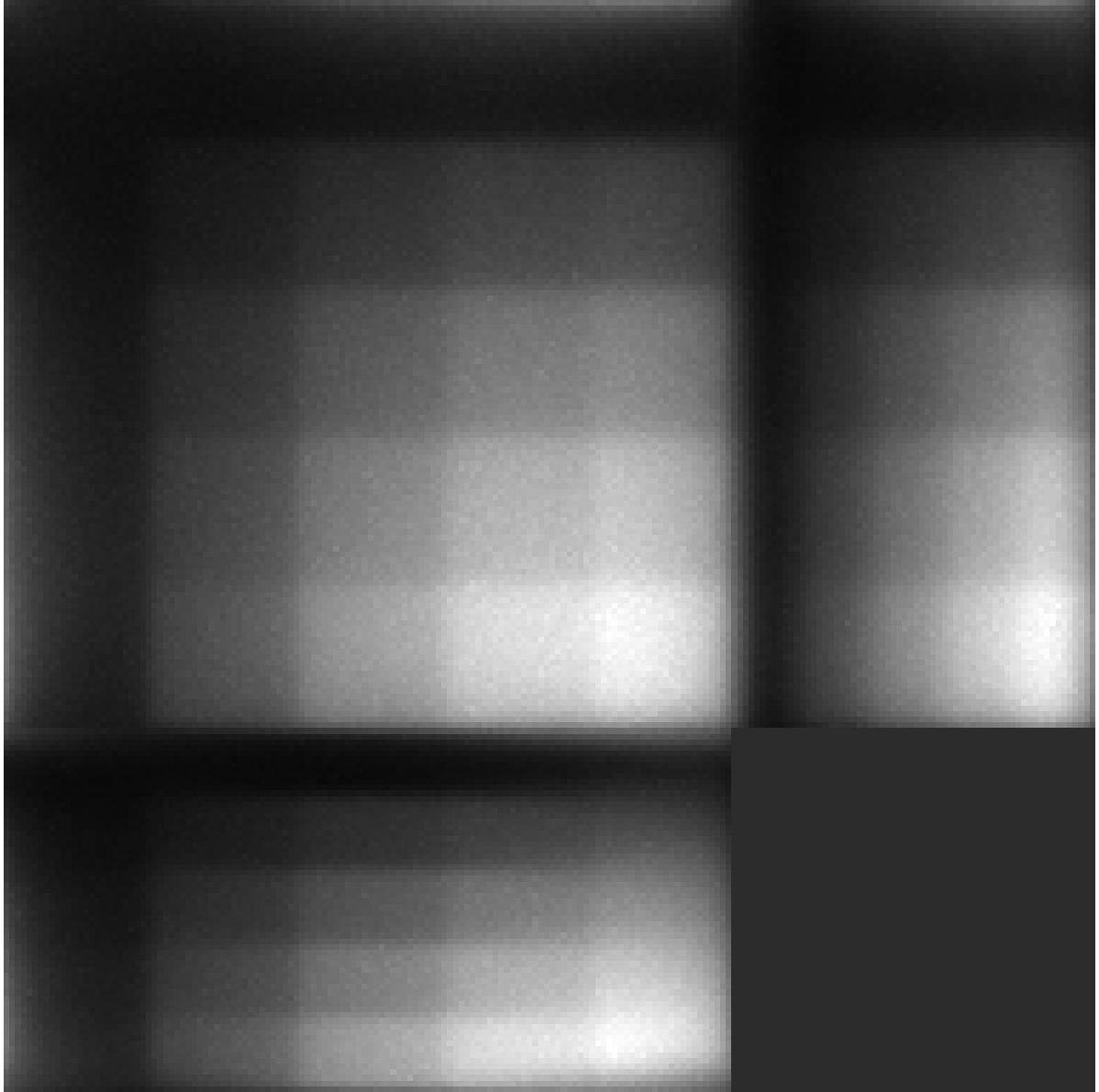}
  \includegraphics[width=0.3\textwidth]{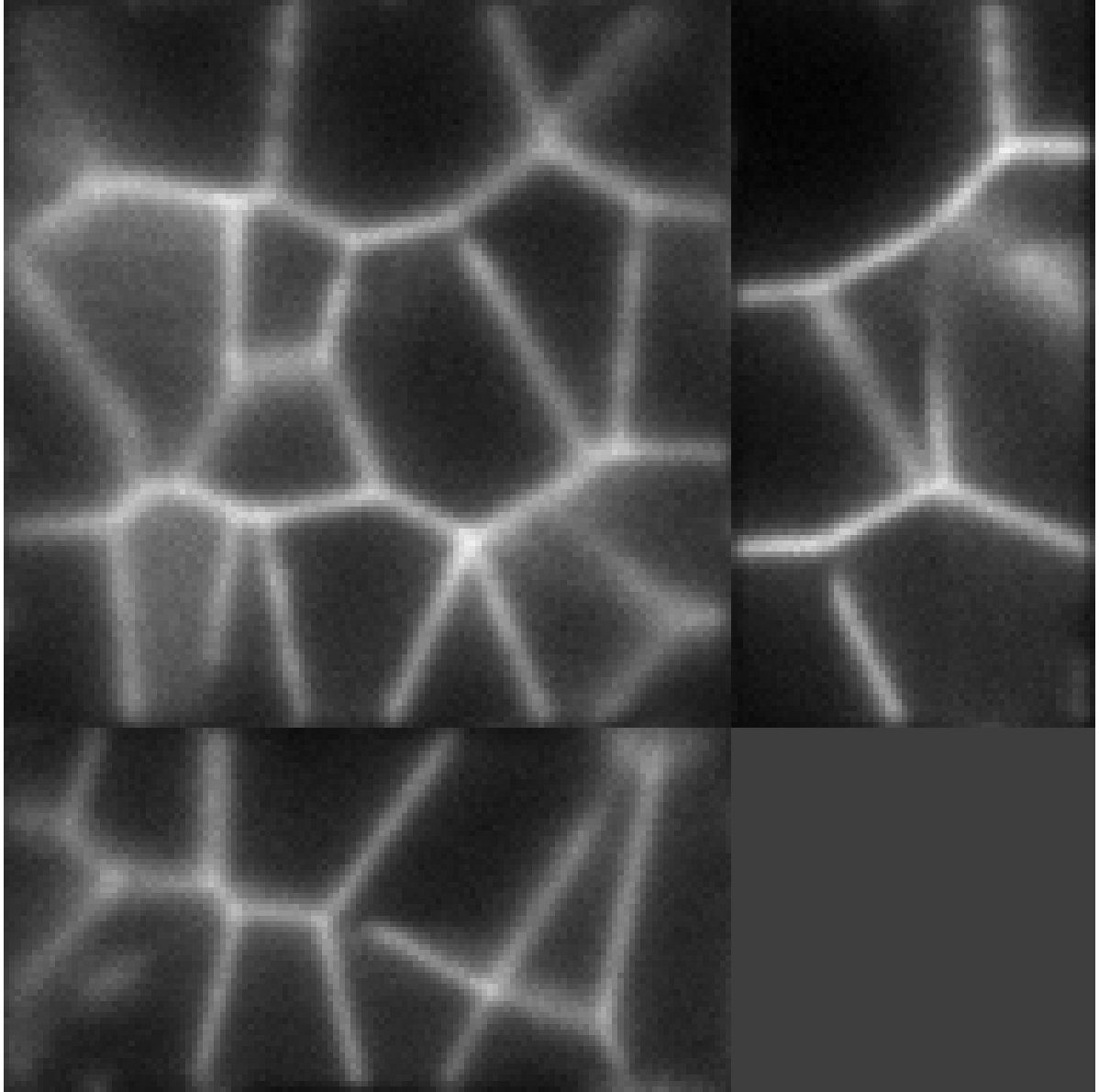}

  \caption{Ground truth (top row) and measured images
    (bottom row), shown using maximum intensity projections,
    except for tissue, where slices in each direction 
    are shown.}
  \label{fig:simulated data}
\end{figure}

We compare the 
reconstruction obtained using the proposed approach,
which we will refer to as LS-IC (light-sheet - infimal convolution), 
with the
reconstructions obtained by using an $L^2$ data fidelity 
term instead of the infimal convolution term, or using 
a convolution operator corresponding to the objective PSF
instead of the light-sheet forward model from 
Section~\ref{sec:ifm}. Specifically, we compare the solution
of \eqref{eq:min problem impl} with the solutions to the
following problems, all solved using PDHG as
described in Section~\ref{sec:solving min problem}:
\begin{align}
  &\min_{u} \left\{
    \alpha \TV(u) +
    \frac{1}{2\sigma_G^2} \|f-Hu\|_2^2 
    + \chi_{[0,B]^{2N}}([u, v]^T)
  \right\},
  \tag{PSF-L2}
  \label{eq:psf l2}\\
  &\min_{u,v} \left\{
    \alpha \TV(u) + 
    \frac{1}{2\sigma_G^2} \|f-v\|_2^2 + D_{KL}(v,Hu) 
    + \chi_{[0,B]^{2N}}([u, v]^T)
  \right\},
  \tag{PSF-IC} 
  \label{eq:psf ic}\\
  &\min_{u} \left\{
    \alpha \TV(u) + 
    \frac{1}{2\sigma_G^2} \|f-Lu\|_2^2 
    + \chi_{[0,B]^{2N}}([u, v]^T)
  \right\},
  \tag{LS-L2}
  \label{eq:ls l2}
\end{align}
where $H$ is the convolution operator with the 
detection objective PSF $h_z$ as given
in \eqref{eq:psf zernike}.

For each test image and each method above, the PDHG
parameters $\rho$ and $\sigma$ used are given 
in Table~\ref{table:pdhg params sim} and $\tau$ is set
to $\tau = 1/\sigma \| \sum_{i=1}^m L_i^* L_i \|$ 
to ensure convergence according to Theorem 5.3
in \cite{condat2013primal}. 
As a stopping criterion, we used the primal-dual gap~\eqref{eq:pd gap},
normalised by the number of pixels $N$ and the dynamic range
of the measured image $f$:
\begin{equation}
  \tilde{D}_{pd} = \frac{D_{pd}}{N \cdot \max_{j=1,\ldots,N} f_j},
\end{equation}
with a threshold of $10^{-6}$ and a maximum number 
of $10000$ iterations.

\begin{table}[h]
  \tabcolsep=0.09cm
  \centering
   \begin{tabular}{| c | c c c | c c c | c c c | c c c |} 
     \hline
      method & & LS-IC & & & \ref{eq:ls l2} & & & \ref{eq:psf ic} & & & \ref{eq:psf l2} & \\
      image & beads & steps & tissue & beads & steps & tissue & beads & steps & tissue & 
      beads & steps & tissue \\
     \hline
     $\rho$ & 0.9 & 0.9 & 0.9 & 0.9 & 0.9 & 0.9 & 0.9 & 0.9 & 0.8 & 0.9 & 0.9 & 0.9 \\
     $\sigma$ & 0.0001 & 0.0001 & 0.00001 & 0.0001 & 0.001 & 0.0001 & 0.0001 & 0.0001 & 0.0001 & 0.0001 & 0.001 & 0.0001 \\
     \hline
   \end{tabular}
   \caption{Values of the PDHG parameters $\rho$ and $\sigma$
     used in the numerical experiments with simulated data.}
   \label{table:pdhg params sim}
\end{table}

The results of the four methods applied to the test 
images are given in Figure~\ref{fig:simulated results} and 
quantitative results are given in Table~\ref{table:simulated results}.
For each test image and each method, the regularisation parameter
has been chosen to optimise the normalised $l^2$ error and the 
structural similarity index (SSIM) respectively.

We note that \ref{eq:psf l2} and \ref{eq:psf ic} perform
particularly poorly, highlighting the importance of an
accurate representation of the image formation model
instead of simply using the detection objective PSF
as the forward operator. Comparing LS-IC and \ref{eq:ls l2},
we see better results when using the infimal convolution
data fidelity for the beads and the steps image,
both visually and quantitatively.
The deblurring is performed better on the beads image, 
while on the steps image we see a better denoising
effect, especially along the edges in the image.
For the tissue image, both fidelities give 
comparable results, but as we see in Figure~\ref{fig:simulated discrepancy}, 
when the ground truth is not known, choosing $\alpha$
using the discrepancy principle gives a better
result for the infimal convolution model.

The reconstructions shown in Figure~\ref{fig:simulated
  discrepancy} are obtained by applying the discrepancy
principle corresponding to each method. For LS-IC, we choose
a value of $\alpha$ such that it satisfies a variation of
the discrepancy principle given in \eqref{eq:discr_pr1},
where we enforce that the single noise fidelities are
bounded by their respective noise bounds, rather than the sum
of the fidelities being bounded by the sum of the noise
bounds, as stated in \eqref{eq:discr_pr1}.
While both versions give good results, we found the former
to give more accurate reconstructions. Here, the bound on
the Poisson noise is set to $\frac12$, motivated by 
the following lemma from~\cite{zanella2009}, which gives the
expected value of the Kullback-Leibler divergence:
\begin{lem}
  Let $Y_{\beta}$ be a Poisson random variable with 
  expected value $\beta$ and consider the function:
  \begin{equation*}
   F(Y_{\beta}) = 2\left\{ 
     Y_{\beta} \log\left(\frac{Y_{\beta}}{\beta}\right)
     +\beta - Y_{\beta}
   \right\}.
  \end{equation*}
  Then, for large $\beta$,the following estimate of the 
  expected value of $F(Y_{\beta})$ holds:
  \begin{equation*}
   \mathbb{E}[F(Y_{\beta})] 
   = 1 + \mathcal{O}\left(\frac{1}{\beta}\right).
  \end{equation*}  
  \label{lem:kld pois}
\end{lem}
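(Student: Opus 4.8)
The plan is to reduce the statement to a Taylor expansion of the integrand around the mean, combined with the explicit low-order central moments of the Poisson distribution. First I would introduce the centred and rescaled variable $t \defeq (Y_\beta - \beta)/\beta$ and write
\begin{equation*}
  \tfrac12 F(Y_\beta) = \beta\, g(t), \qquad g(t) \defeq (1+t)\log(1+t) - t,
\end{equation*}
which holds for all $t > -1$, the boundary case $Y_\beta = 0$ (i.e.\ $t=-1$, where $g(-1)=1$ under the convention $0\log 0 = 0$) being handled separately. Since $g$ is convex with $g(0)=g'(0)=0$, it is nonnegative, so $F(Y_\beta)\ge 0$ throughout, which makes the subsequent splitting clean. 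Near $t=0$ one has the expansion $g(t) = \tfrac{t^2}{2} - \tfrac{t^3}{6} + \tfrac{t^4}{12} - \cdots$, whence
\begin{equation*}
  \tfrac12 F(Y_\beta) = \frac{(Y_\beta-\beta)^2}{2\beta} - \frac{(Y_\beta-\beta)^3}{6\beta^2} + \frac{(Y_\beta-\beta)^4}{12\beta^3} - \cdots .
\end{equation*}
This already exhibits the mechanism behind the result: the quadratic term, after taking the expectation, will produce the constant.

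Next I would take expectations term by term and insert the central moments of a Poisson variable, namely $\mathbb{E}[(Y_\beta-\beta)^2] = \beta$, $\mathbb{E}[(Y_\beta-\beta)^3] = \beta$ and $\mathbb{E}[(Y_\beta-\beta)^4] = 3\beta^2+\beta$. The leading term contributes $\tfrac{\beta}{2\beta} = \tfrac12$, so $\mathbb{E}[F(Y_\beta)] = 1 + (\text{corrections})$. The $k$-th term of the series equals $\mathbb{E}[(Y_\beta-\beta)^k]/\beta^{k-1}$ up to a constant, and since the $k$-th central moment of a Poisson law grows only like $\beta^{\lfloor k/2\rfloor}$, every term with $k\ge 3$ is $\mathcal{O}(1/\beta)$ (the dominant corrections coming from $k=3,4$). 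Summing, the corrections are $\mathcal{O}(1/\beta)$, giving $\mathbb{E}[F(Y_\beta)] = 1 + \mathcal{O}(1/\beta)$.

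The genuinely delicate point, and the one I expect to be the main obstacle, is that this term-by-term manipulation is not literally valid: $g$ is not a polynomial, the series for $\log(1+t)$ converges only for $|t|<1$ (that is $0<Y_\beta<2\beta$), and $Y_\beta$ has unbounded support. The real work is therefore to justify truncating the expansion uniformly in $\beta$. I would split $\mathbb{E}[F(Y_\beta)]$ according to whether $Y_\beta$ lies in a central band, say $\abs{Y_\beta-\beta}\le \beta^{2/3}$, or in its complement. On the band one has $\abs{t}\le \beta^{-1/3}\to 0$, so a finite Taylor expansion of $g$ with Lagrange remainder applies and the neglected remainder contributes $\mathcal{O}(1/\beta)$ after integration, the band-truncated moments differing from the full central moments only by negligible tail terms. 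On the tail I would invoke the exponential concentration of the Poisson law (a Chernoff/Bennett bound gives $\mathbb{P}(\abs{Y_\beta-\beta}>\beta^{2/3}) \le 2e^{-c\beta^{1/3}}$) together with Cauchy--Schwarz, $\mathbb{E}[F(Y_\beta)\mathbf 1_{\{\abs{Y_\beta-\beta}>\beta^{2/3}\}}] \le \sqrt{\mathbb{E}[F(Y_\beta)^2]}\,\sqrt{\mathbb{P}(\abs{Y_\beta-\beta}>\beta^{2/3})}$; since $F$ grows at most like $Y_\beta\log Y_\beta$, the factor $\mathbb{E}[F(Y_\beta)^2]$ is only polynomially large in $\beta$, so the product of a super-polynomially small probability with a polynomially bounded integrand is $o(1/\beta)$. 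Combining the central contribution $1+\mathcal{O}(1/\beta)$ with the negligible tail yields the claim; the interior expansion and moment computation are routine, and only the tail estimate requires care.
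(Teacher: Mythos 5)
Your proof is correct: the reduction to $g(t)=(1+t)\log(1+t)-t$ with $t=(Y_\beta-\beta)/\beta$, the Poisson central moments $\beta$, $\beta$, $3\beta^2+\beta$ giving the leading $\tfrac12$ and $\mathcal O(1/\beta)$ corrections, and the band/tail splitting with a Chernoff bound plus Cauchy--Schwarz to justify the truncated expansion are all sound. Note that the paper does not prove this lemma itself but quotes it from~\cite{zanella2009}, where the argument is the same Taylor-expansion-plus-moments computation; your addition is the explicit, rigorous control of the remainder and of the tail where the series expansion is invalid.
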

  
The experiments were run using 
Matlab version R2020b Update 2 (9.9.0.1524771) 64-bit
in Scientific Linux 7.9
on a machine with
Intel Xeon E5-2680 v4 2.40 GHz CPU,
256 GB memory and 
Nvidia P100 16 GB GPU.
The running times, averaged over 5
runs for each method and each image, are given
in Table~\ref{table:times}.

\begin{table}[h!]
  \centering
   \begin{tabular}{| l | c c | c c | c c |} 
     \hline
     image  & beads & & steps & & tissue & \\
     error metric  & $l_2$ & SSIM & $l_2$ & SSIM & $l_2$ & SSIM \\
     \hline
     \ref{eq:psf l2} & 1.74 & 0.845 & 0.499 & 0.561 & 1.57 & 0.592 \\
     \ref{eq:psf ic} & 1.54 & 0.844 & 0.324 & 0.659 & 1.65 & 0.582 \\
     \ref{eq:ls l2} & 0.282 & 0.982 & 0.055 & 0.971 & 0.301 & 0.951 \\
     LS-IC & 0.258 & 0.983 & 0.012 & 0.998 & 0.349 & 0.931 \\
     \hline
   \end{tabular}
   \caption{Results of the numerical experiments 
     on simulated data, with the regularisation 
     parameter $\alpha$ chosen to optimise the normalised
     $l_2$ error and the SSIM respectively.}
   \label{table:simulated results}
\end{table}

\begin{table}[h!]
  \centering
 
  \begin{tabular}{| c | r r r|}
    \hline
    image & beads & steps & tissue \\
    \hline
    \ref{eq:psf l2} & 233  & 1793 & 903 \\
    \ref{eq:psf ic} & 689 & 1077 & 1805 \\
    \ref{eq:ls l2} & 2913 & 2194 & 2273 \\
    LS-IC & 972  & 601 & 850 \\
    \hline
  \end{tabular}
  \caption{Running times for each method and each 
    simulated test image, averaged over 5 runs, in seconds. 
    The minimisation is stopped when the primal-dual gap is 
    lower than $10^{-6}$ or the the maximum number of
    $10000$ iterations is reached.}
  \label{table:times}
\end{table}

\begin{figure}[H]
  \centering
  \includegraphics[width=0.3\textwidth]{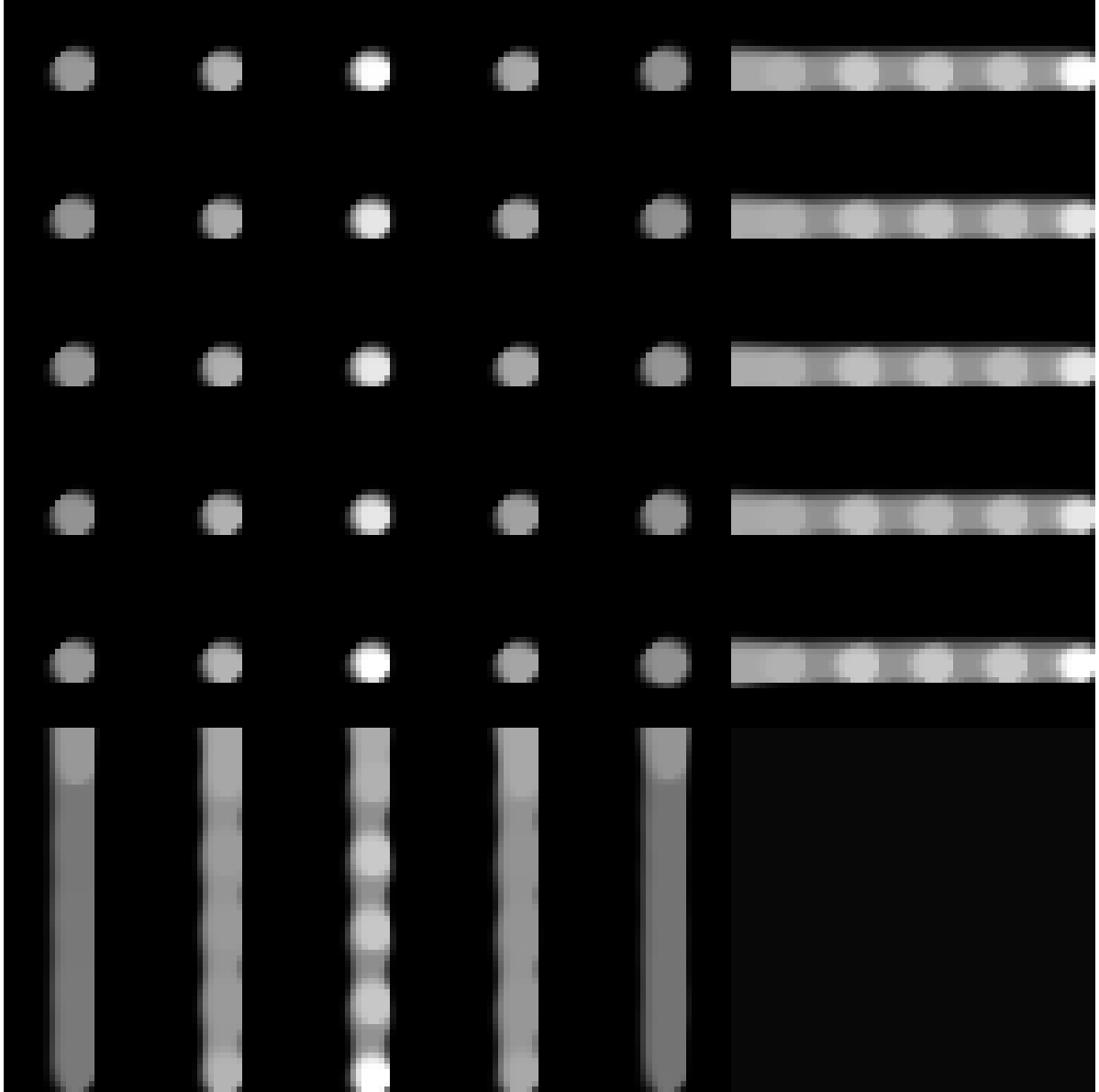}
  \includegraphics[width=0.3\textwidth]{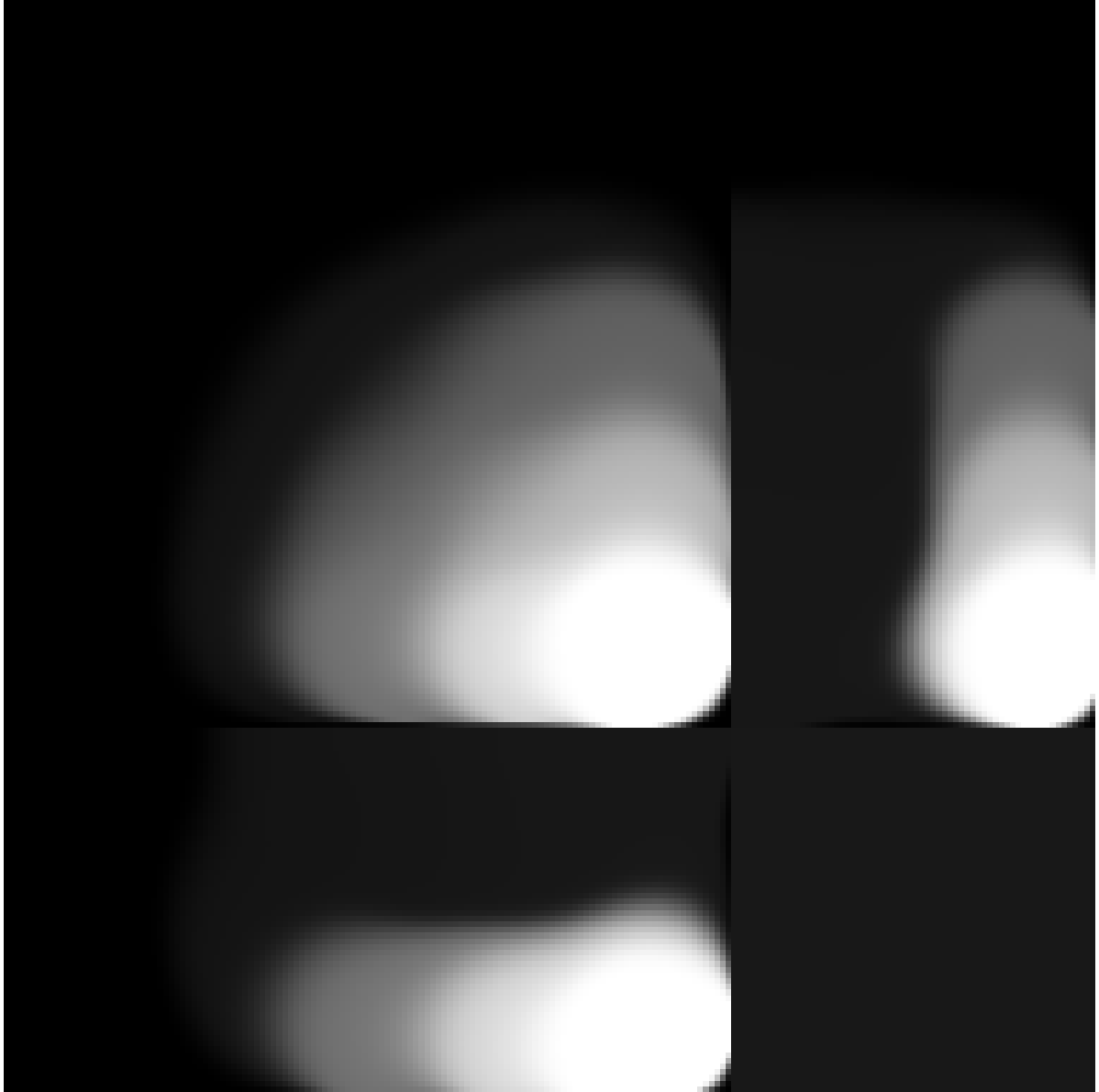}
  \includegraphics[width=0.3\textwidth]{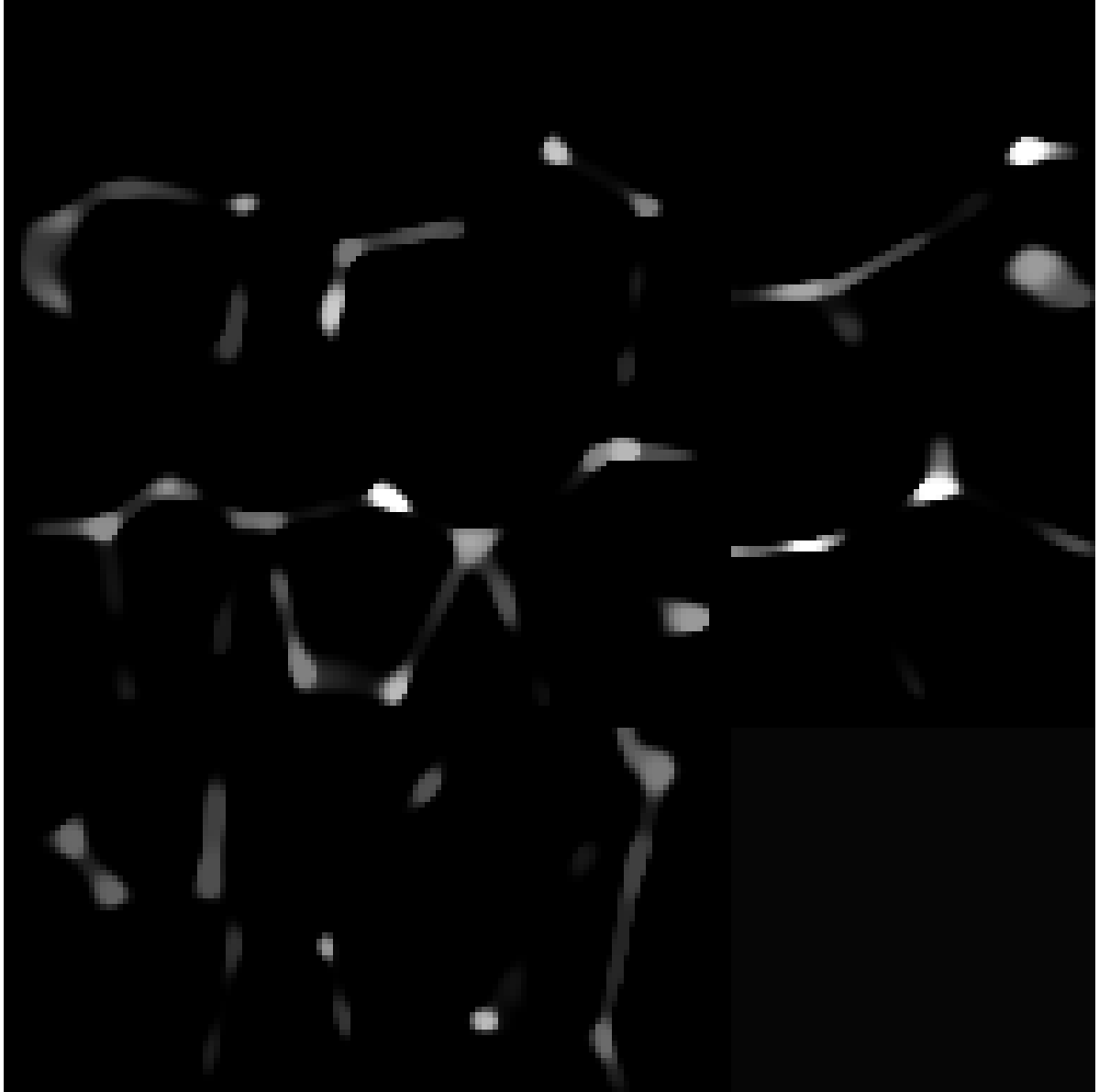}
  \includegraphics[width=0.3\textwidth]{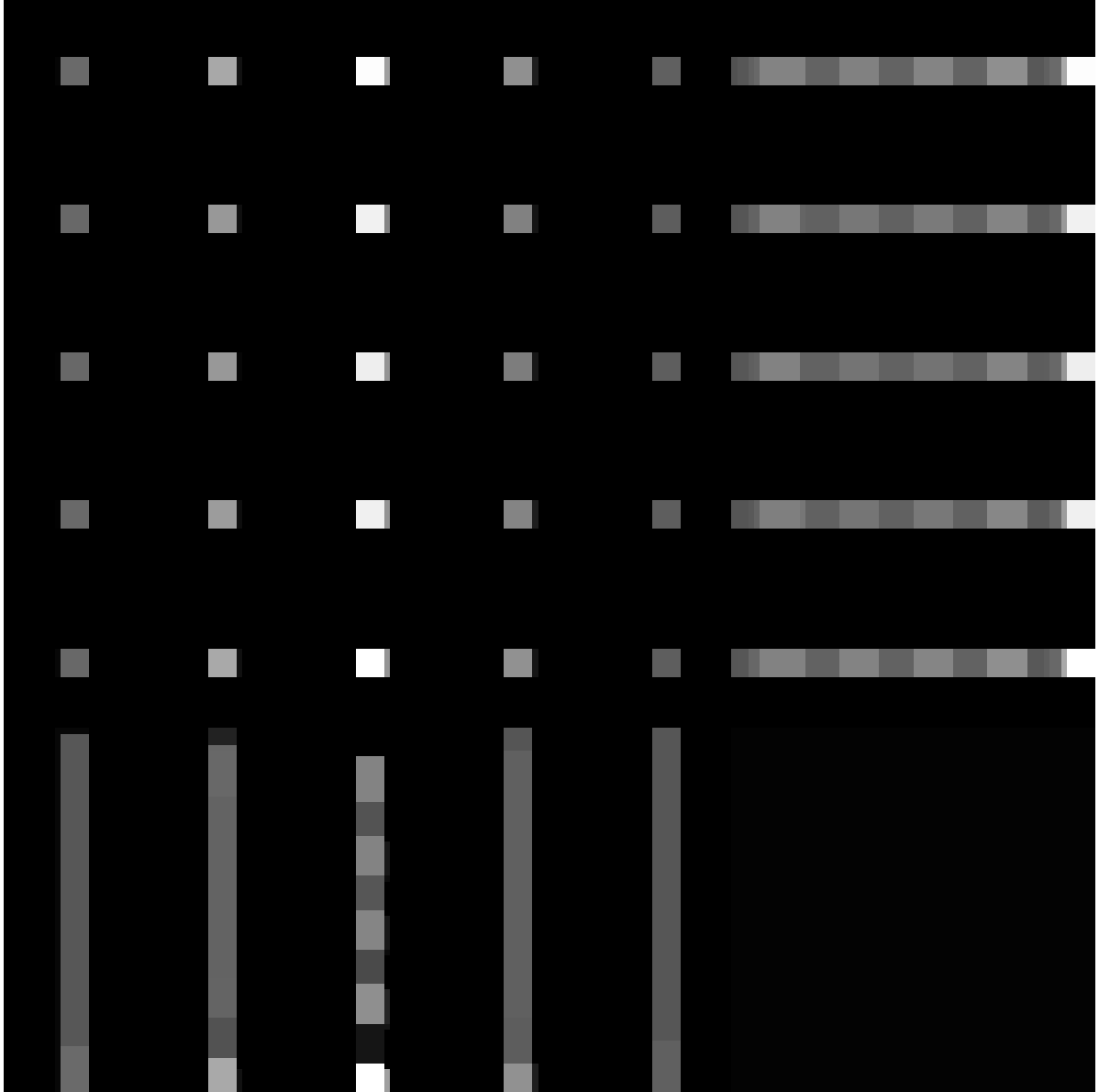}
  \includegraphics[width=0.3\textwidth]{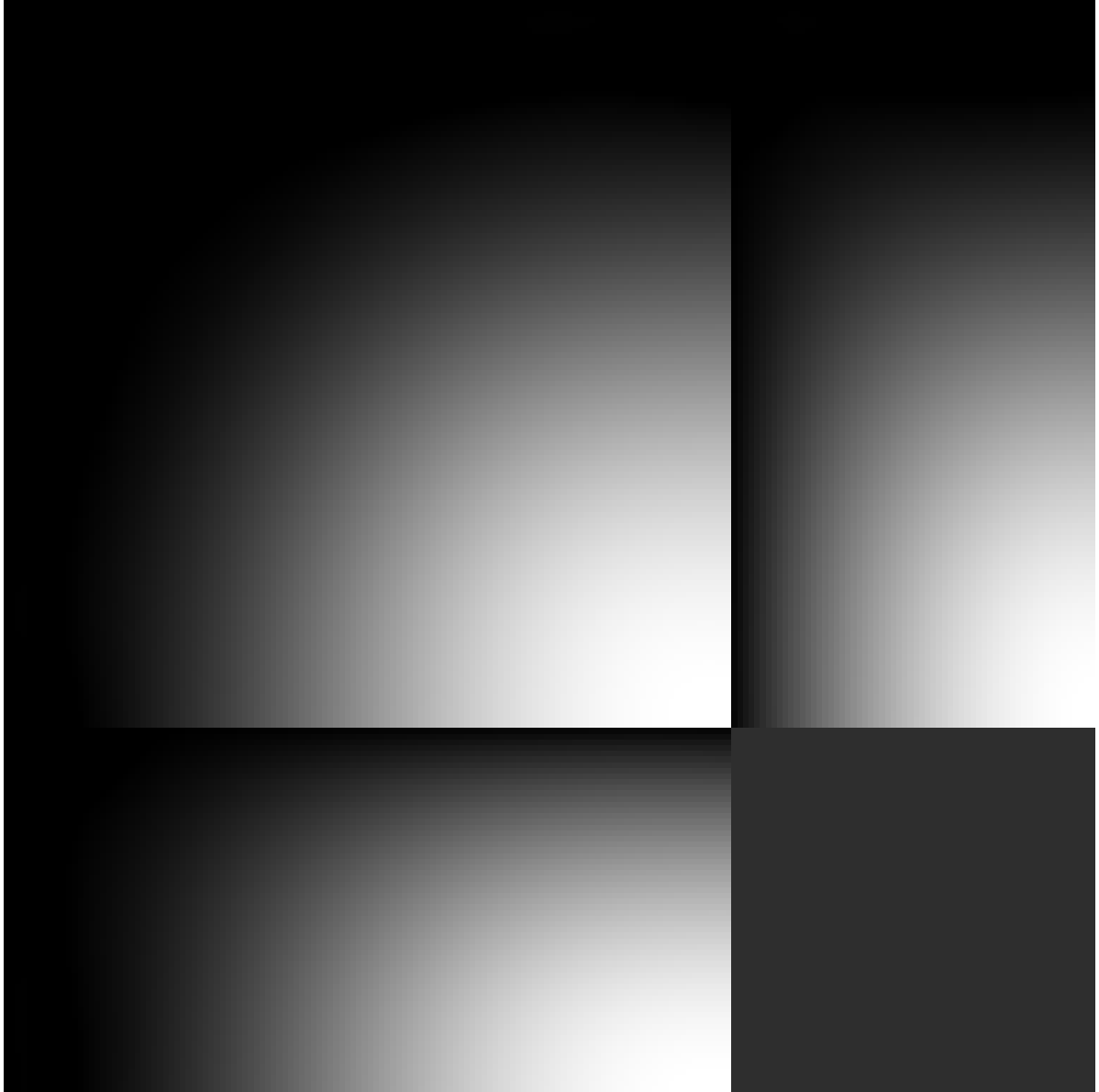}
  \includegraphics[width=0.3\textwidth]{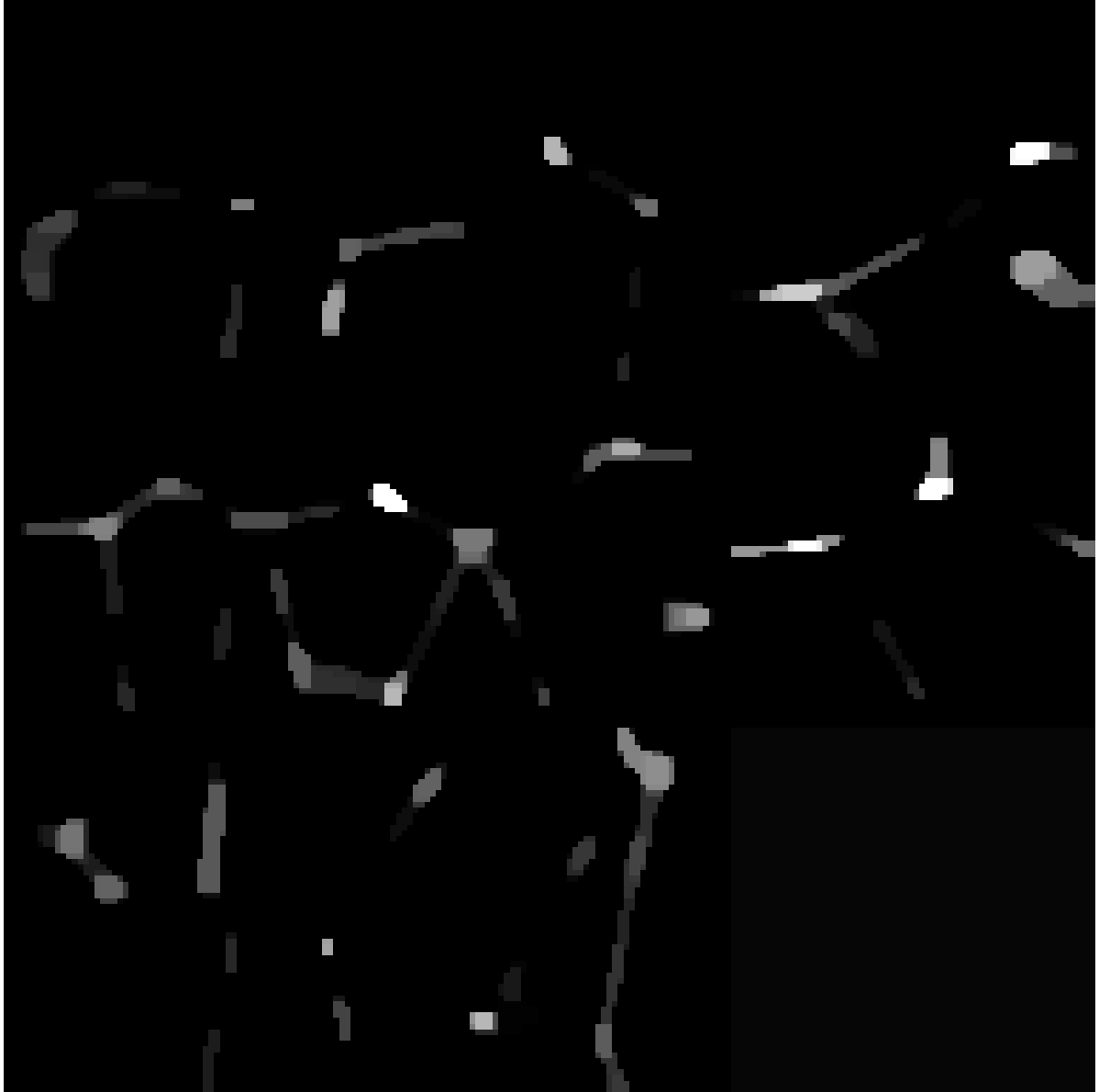}
  \includegraphics[width=0.3\textwidth]{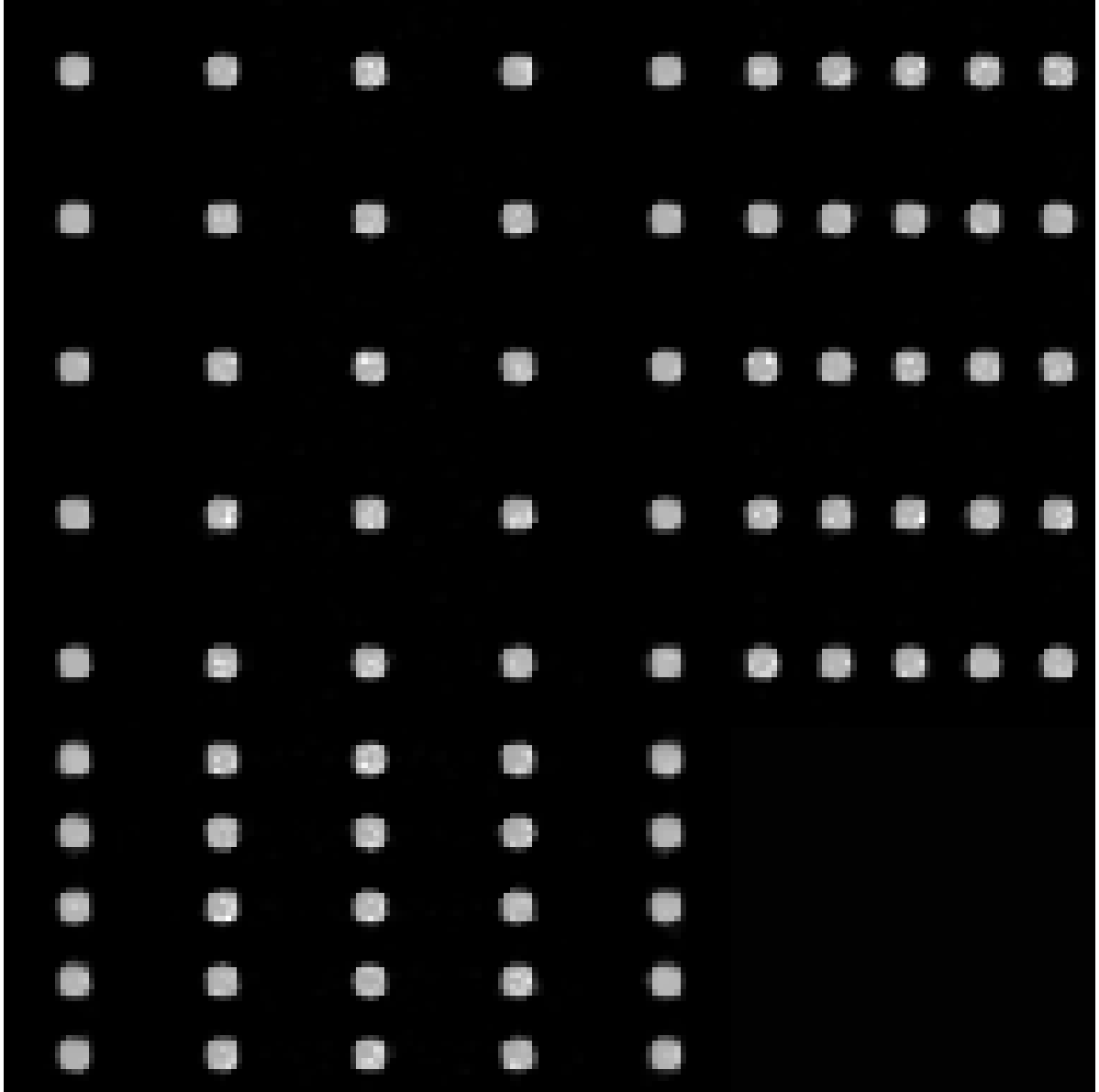}
  \includegraphics[width=0.3\textwidth]{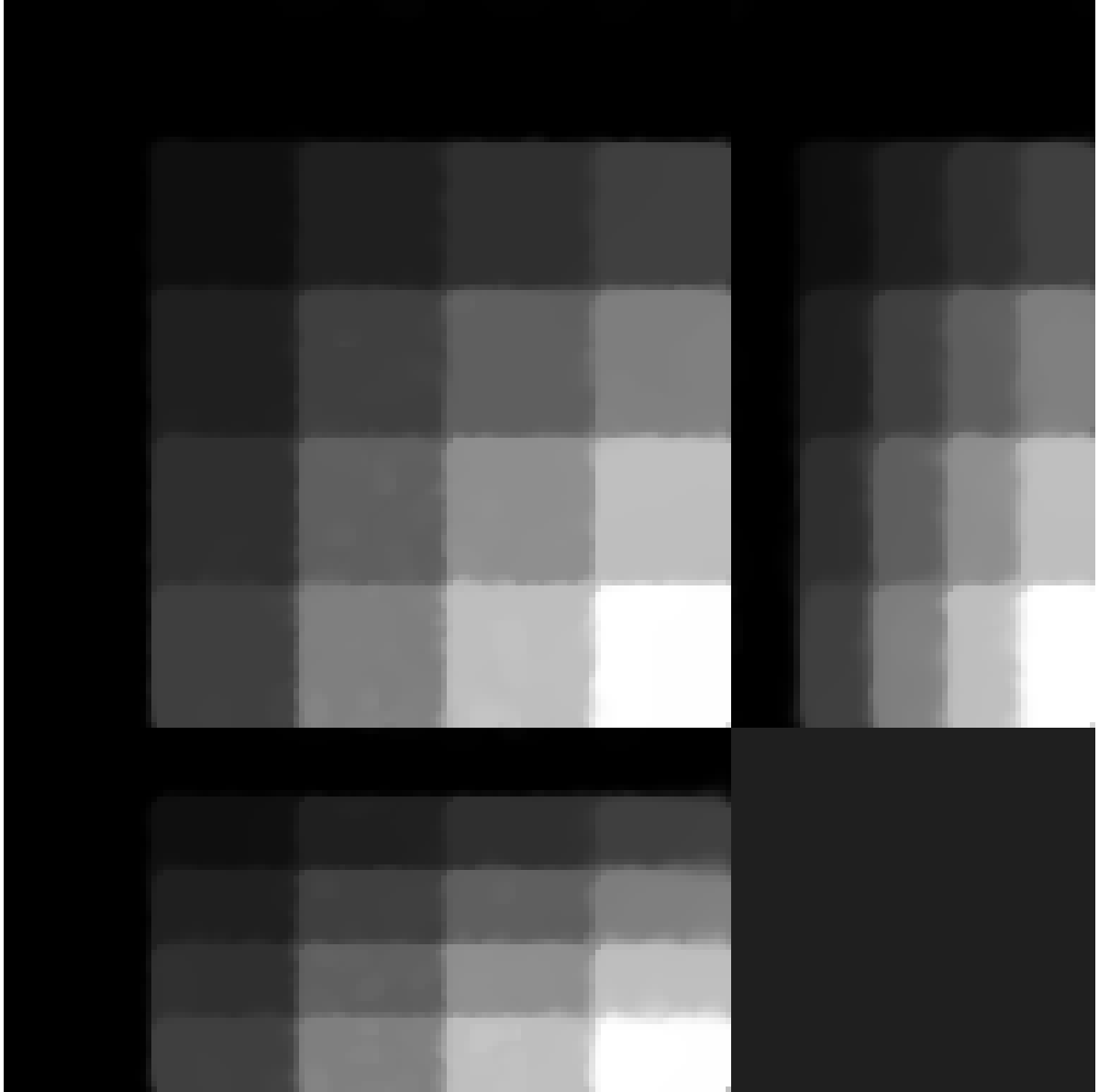}
  \includegraphics[width=0.3\textwidth]{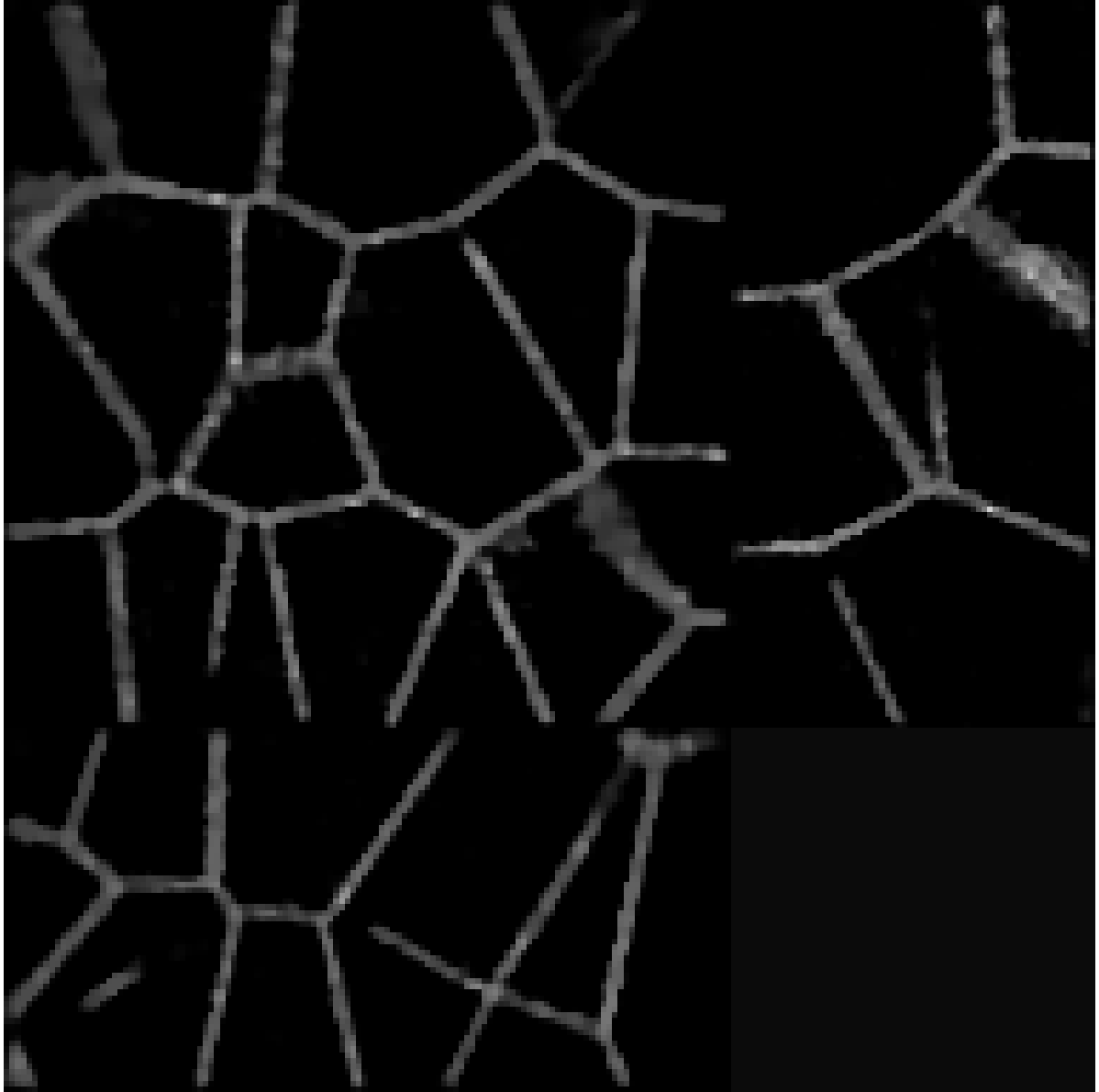}
  \includegraphics[width=0.3\textwidth]{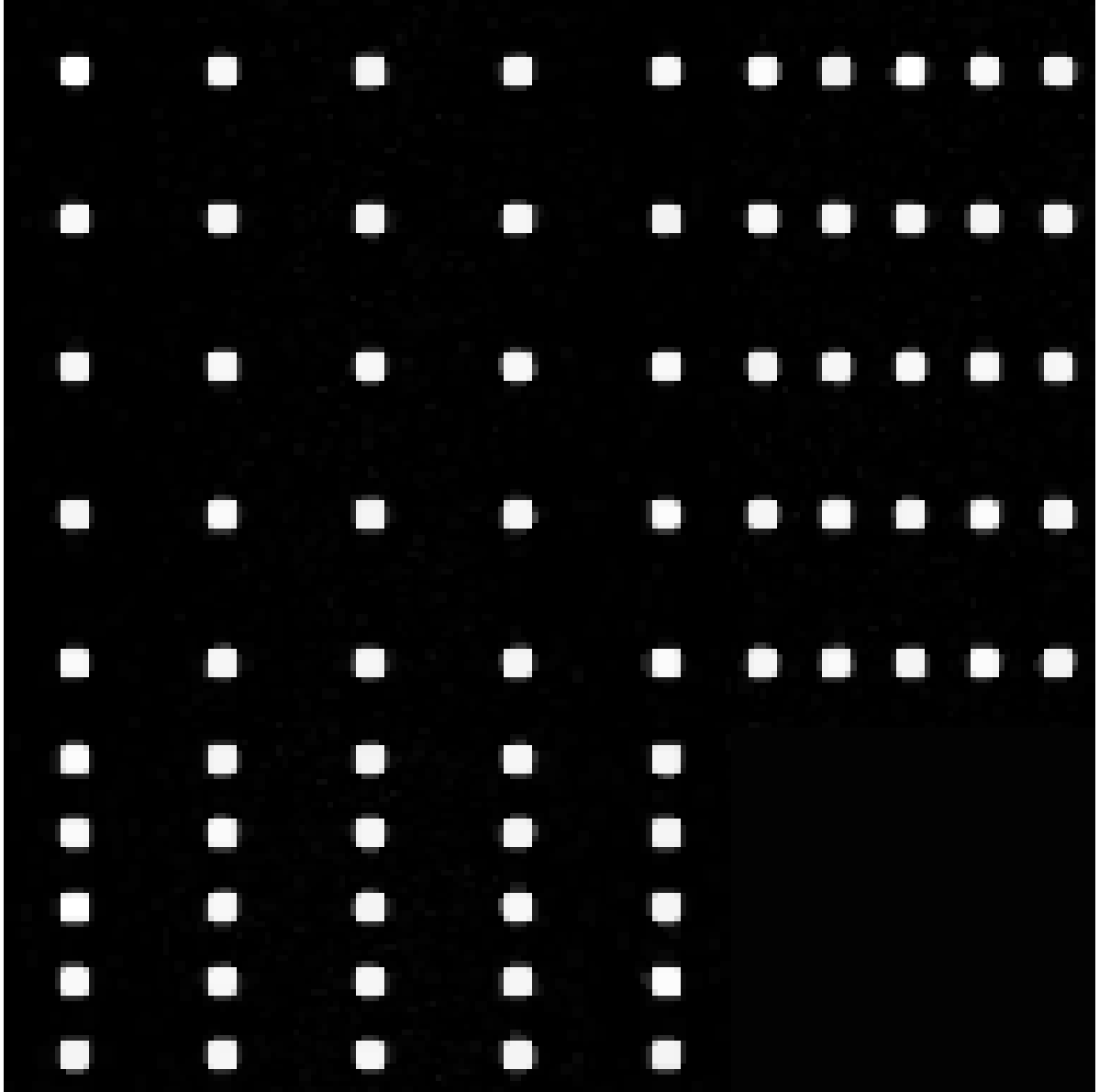}
  \includegraphics[width=0.3\textwidth]{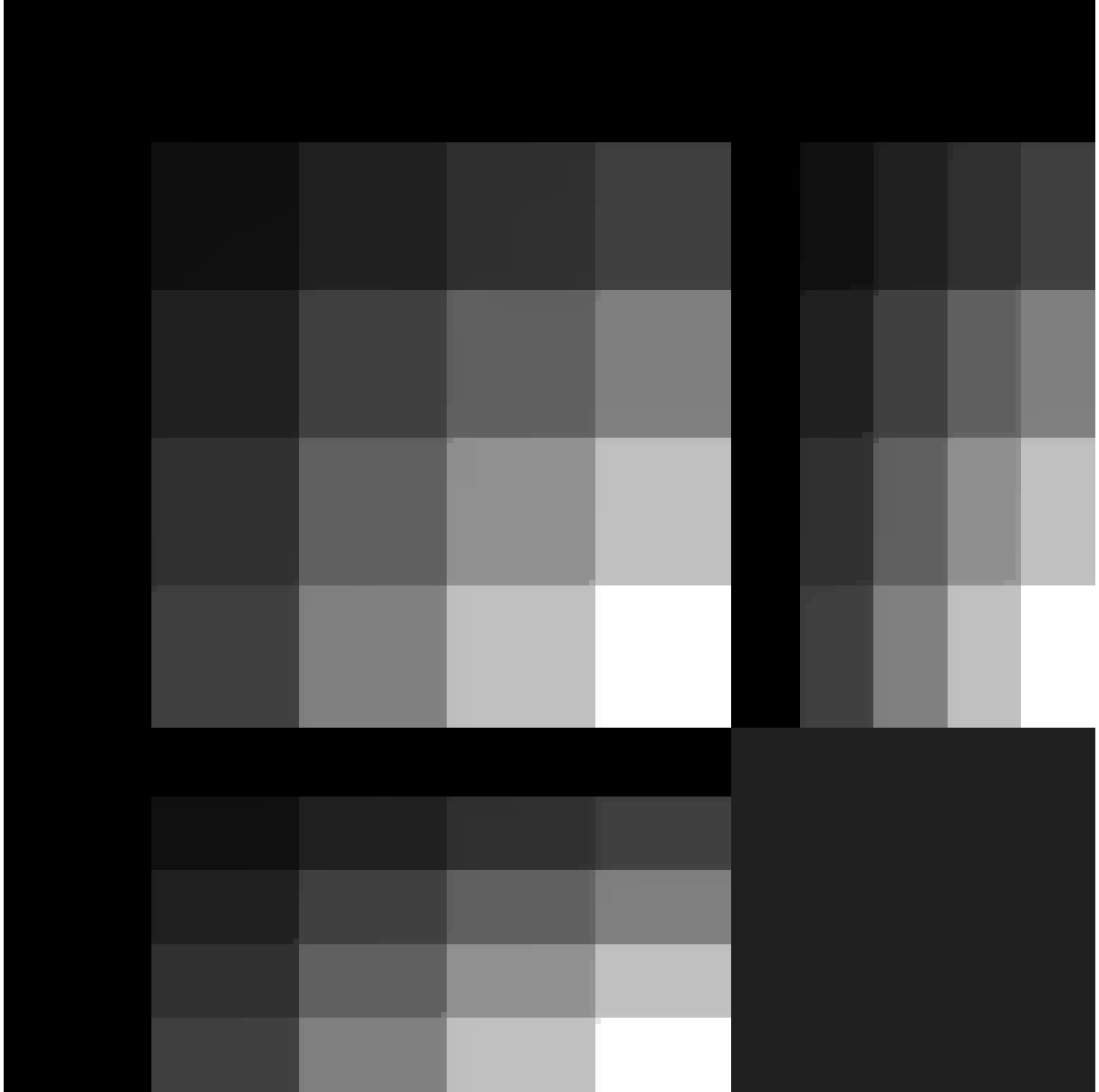}
  \includegraphics[width=0.3\textwidth]{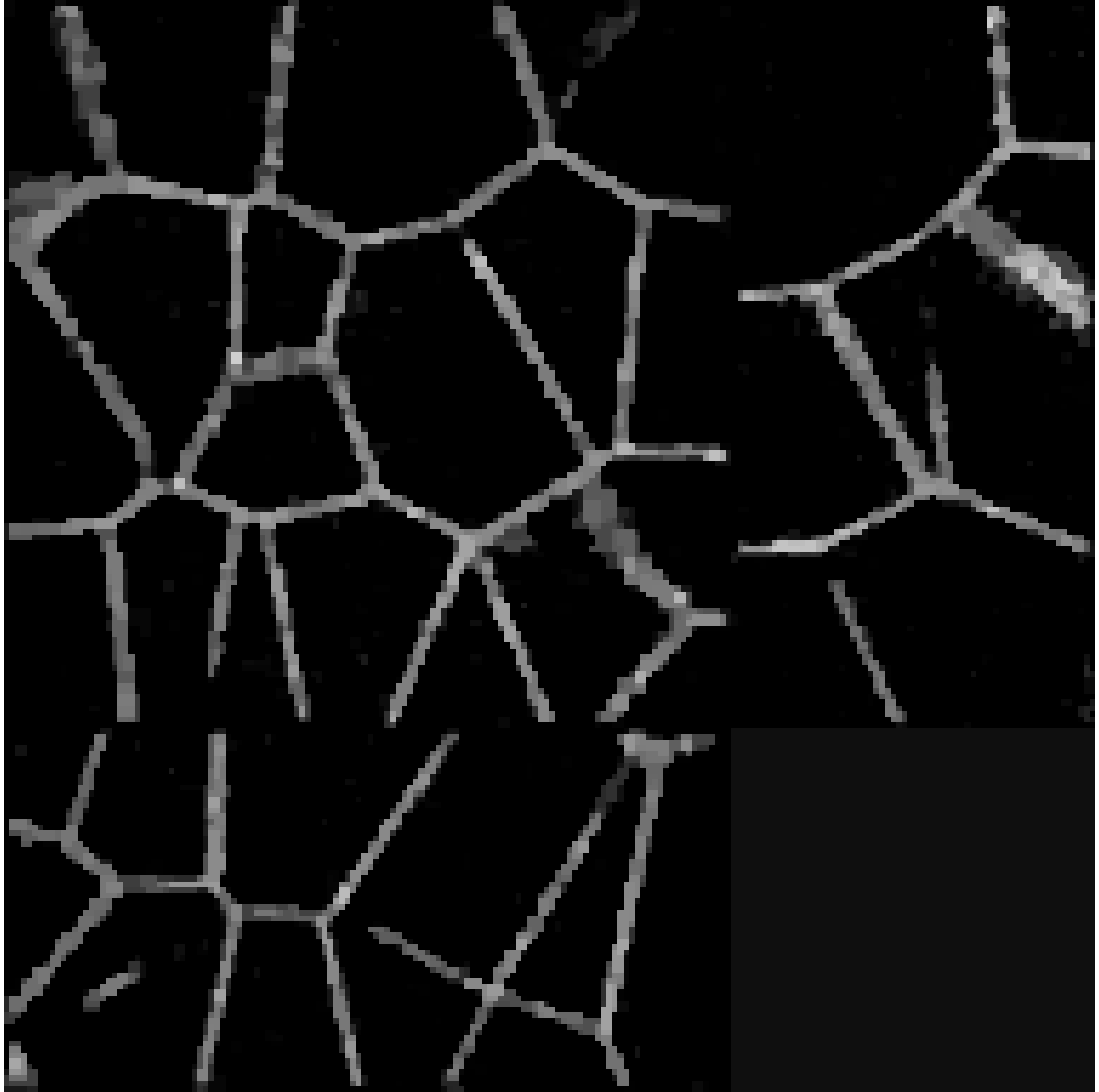}

  \caption{Reconstruction on simulated data with regularisation 
    parameter $\alpha$ such that best MSE 
    is achieved for each method and each image. Shown as 
    maximum intensity projections, except for tissue, 
    where slices in each direction in the centre of the
    sample are shown.
    \textbf{First row:} \ref{eq:psf l2}.
    \textbf{Second row:} \ref{eq:psf ic}.
    \textbf{Third row:} \ref{eq:ls l2}.
    \textbf{Fourth row:} LS-IC.}
  \label{fig:simulated results}
\end{figure}

\begin{figure}[H]
  \centering
  \includegraphics[width=0.3\textwidth]{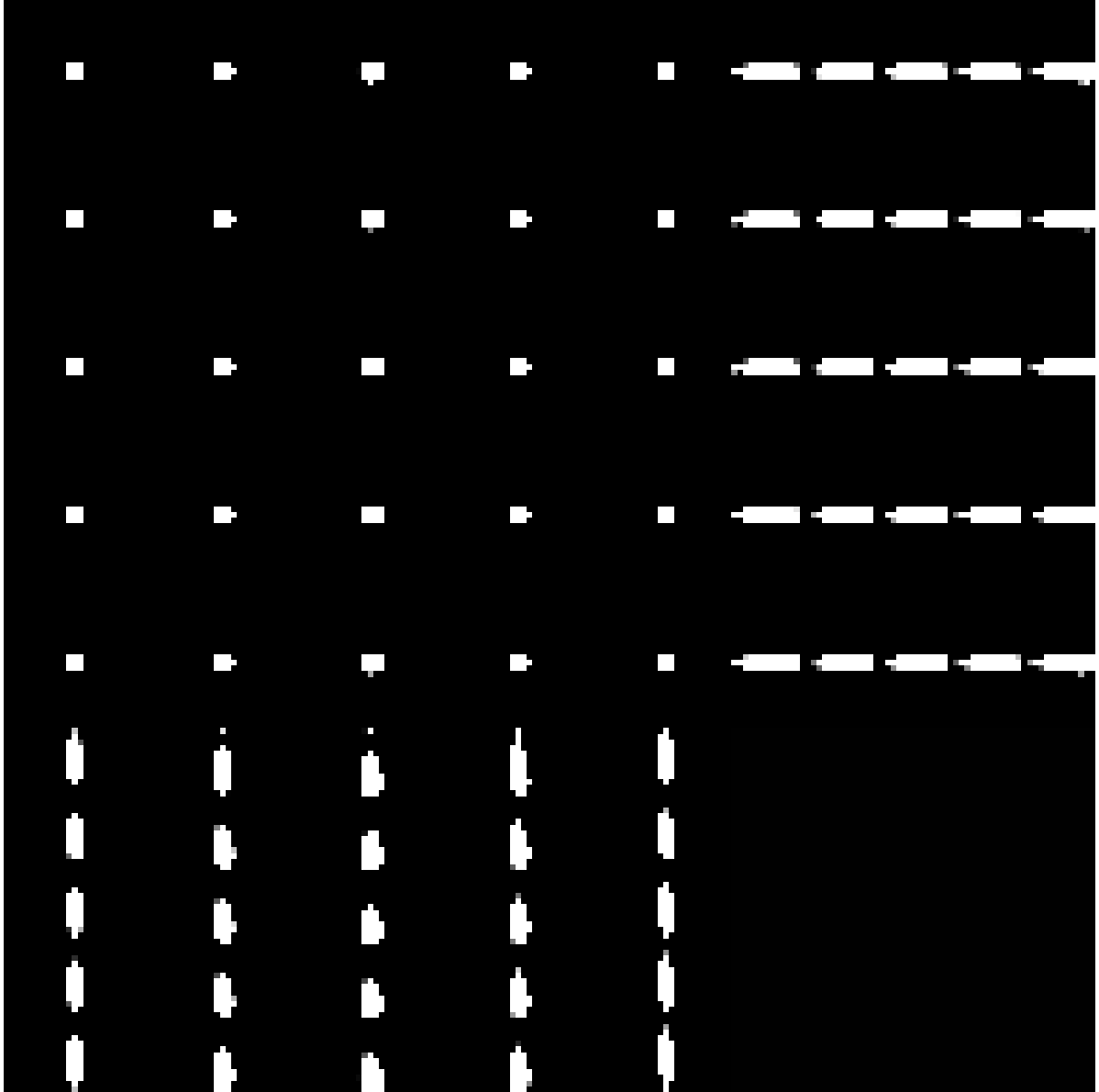}
  \includegraphics[width=0.3\textwidth]{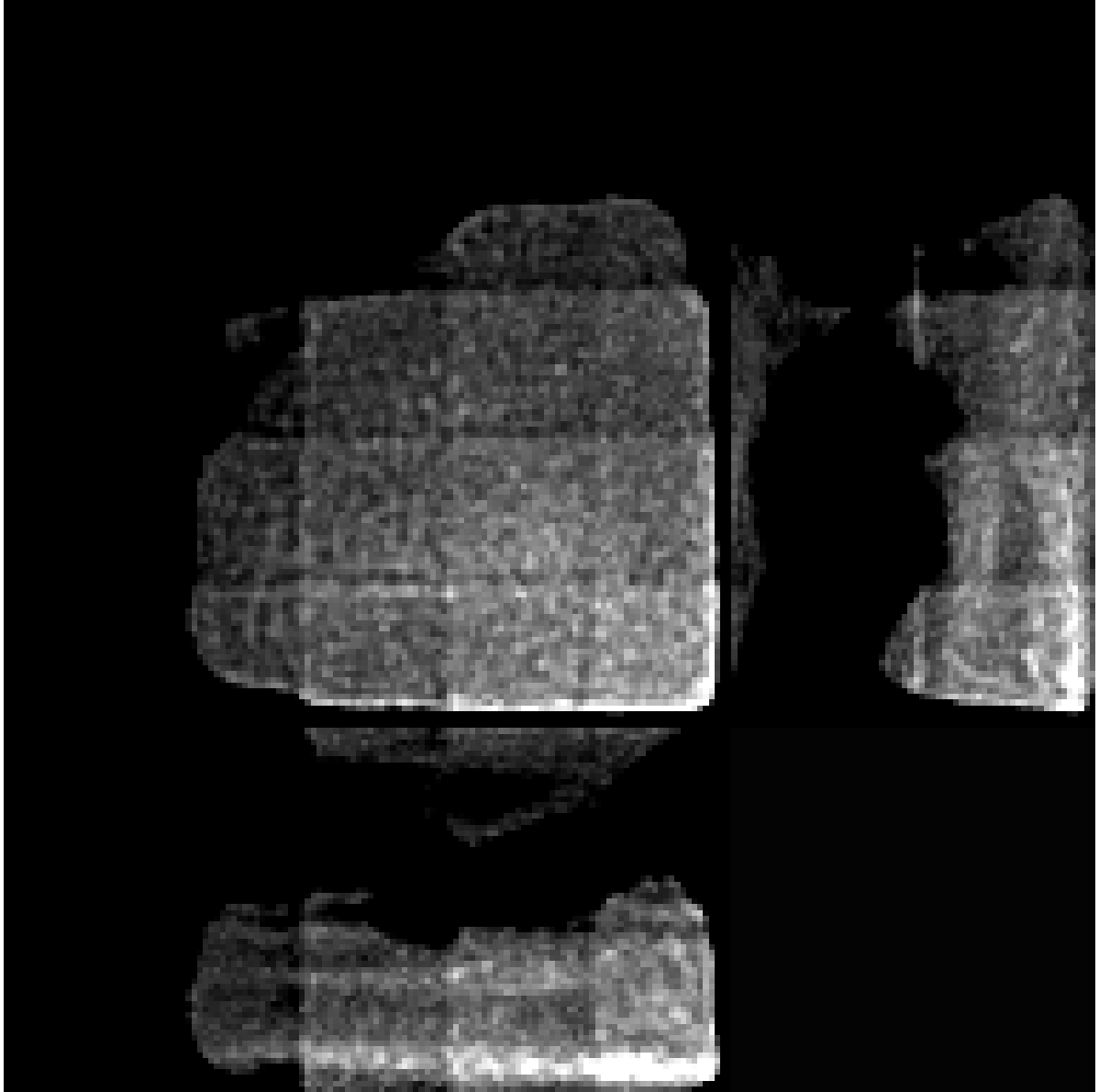}
  \includegraphics[width=0.3\textwidth]{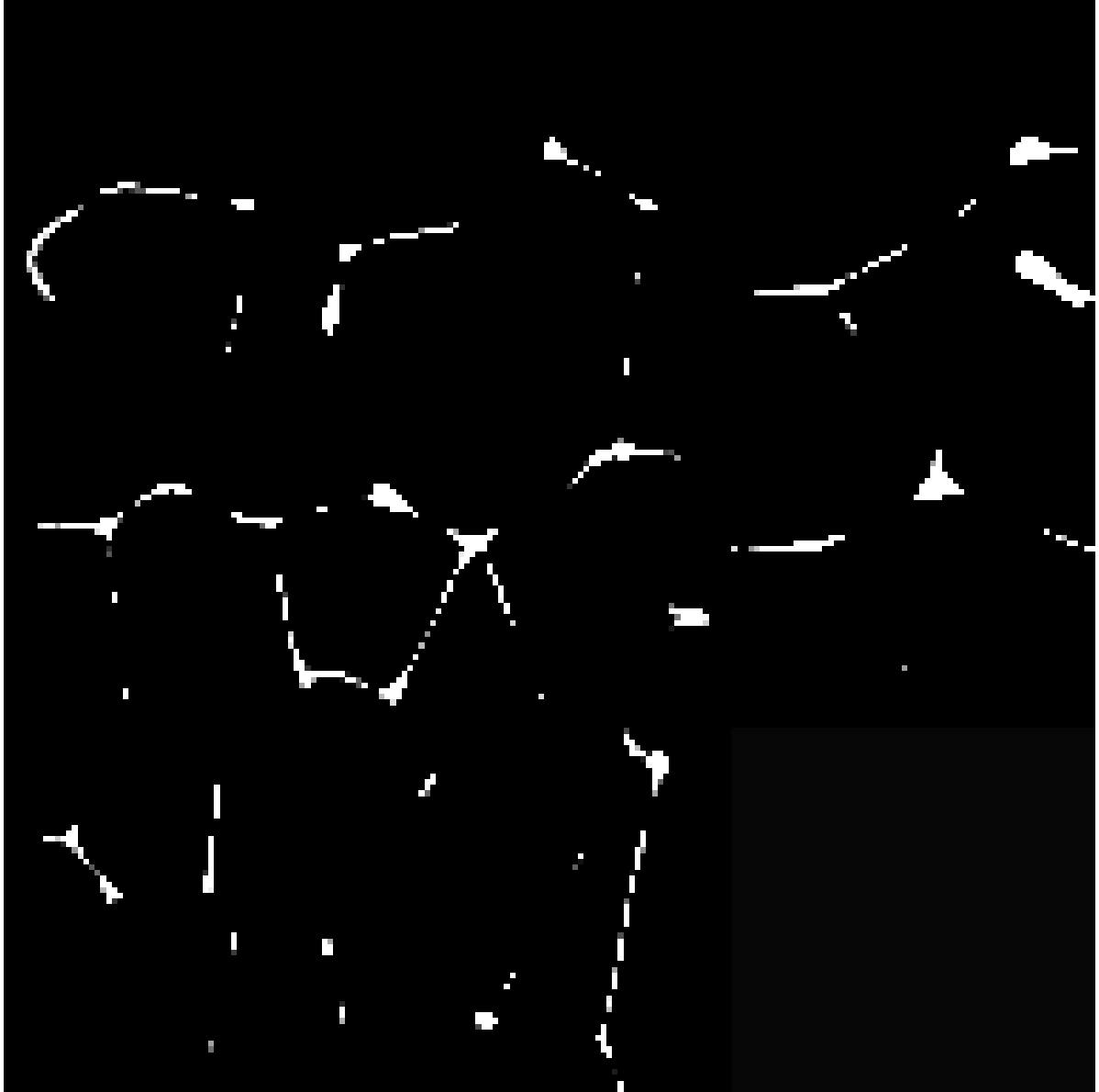}
  \includegraphics[width=0.3\textwidth]{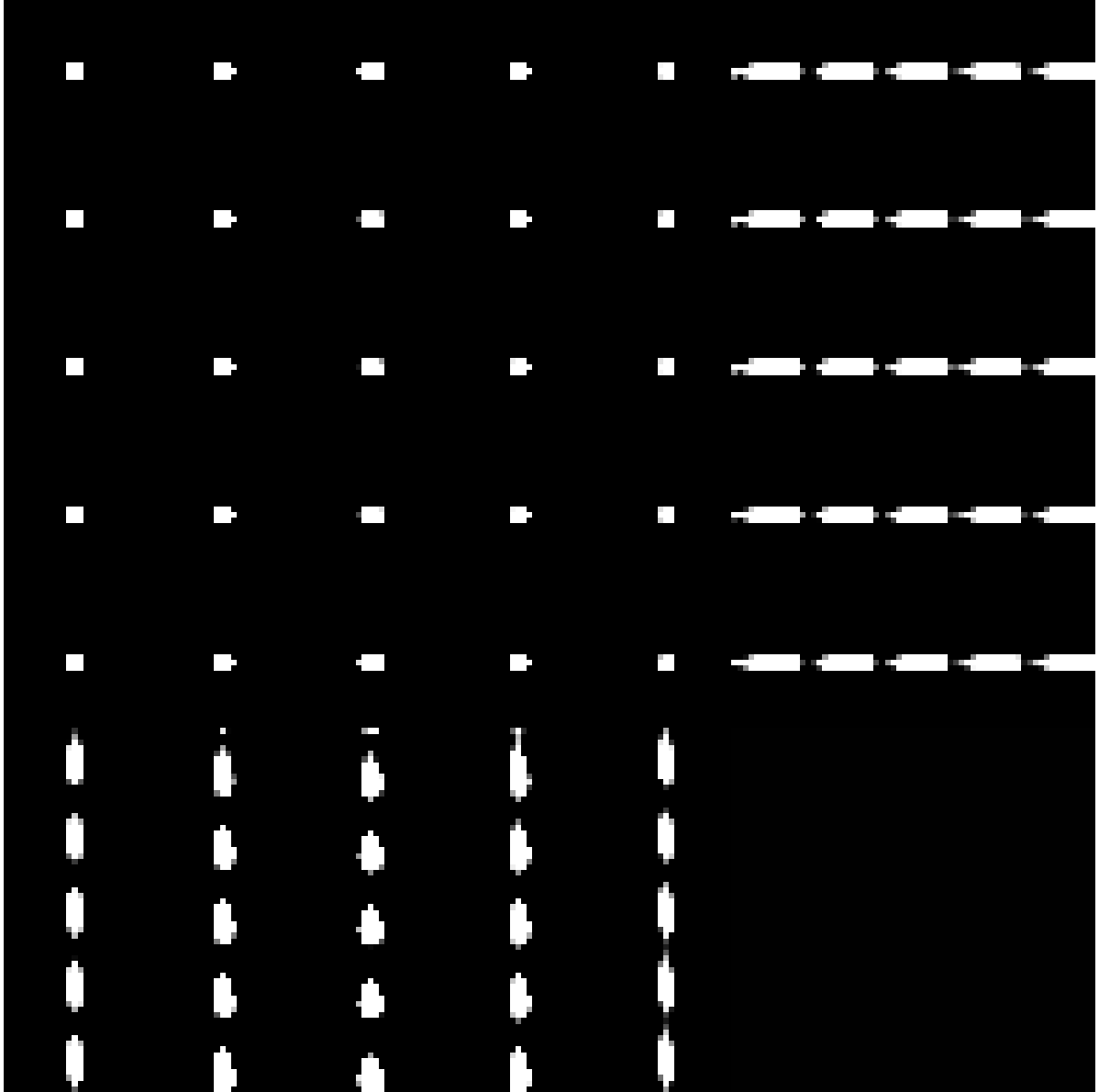}
  \includegraphics[width=0.3\textwidth]{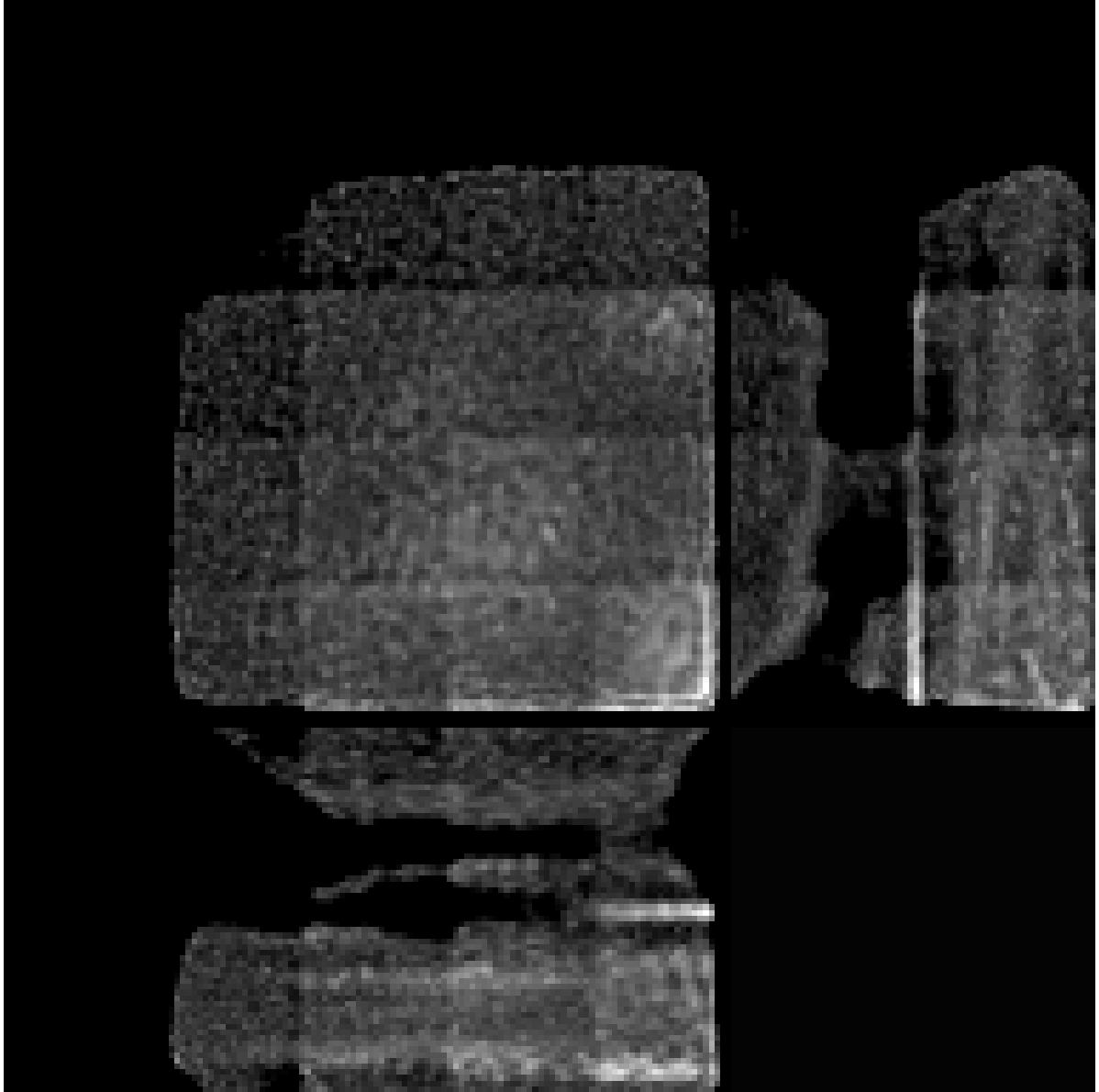}
  \includegraphics[width=0.3\textwidth]{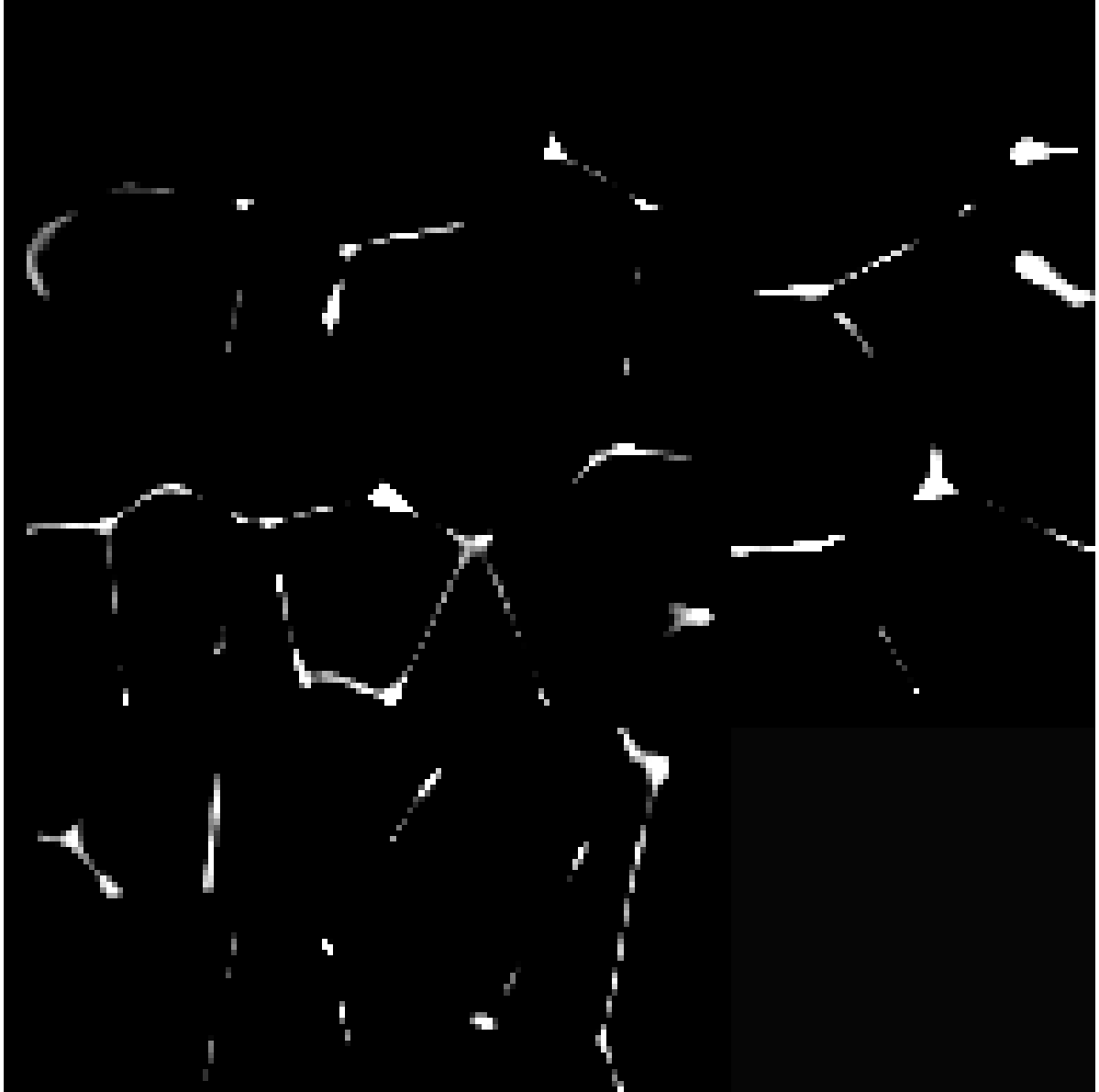}
  \includegraphics[width=0.3\textwidth]{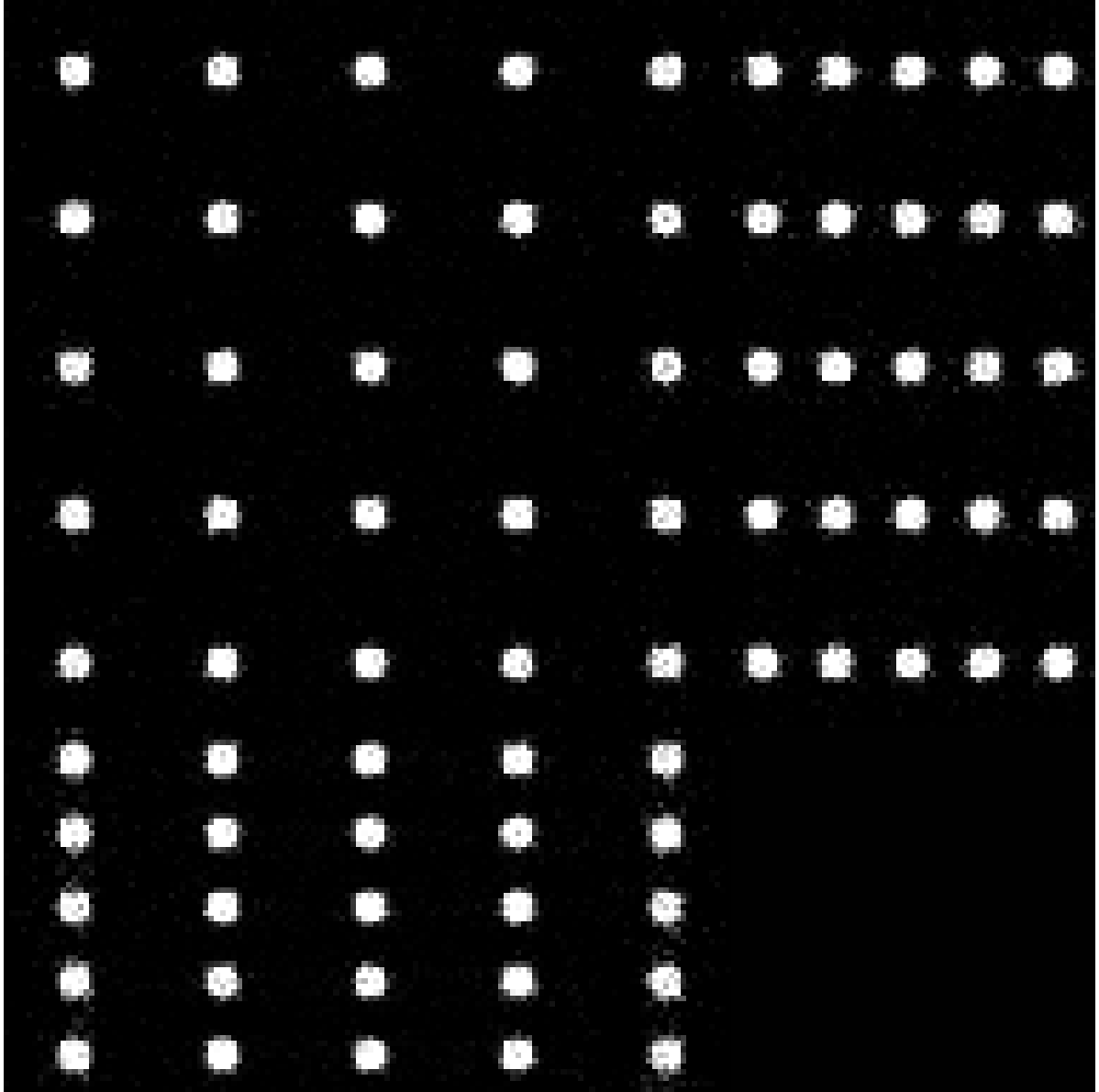}
  \includegraphics[width=0.3\textwidth]{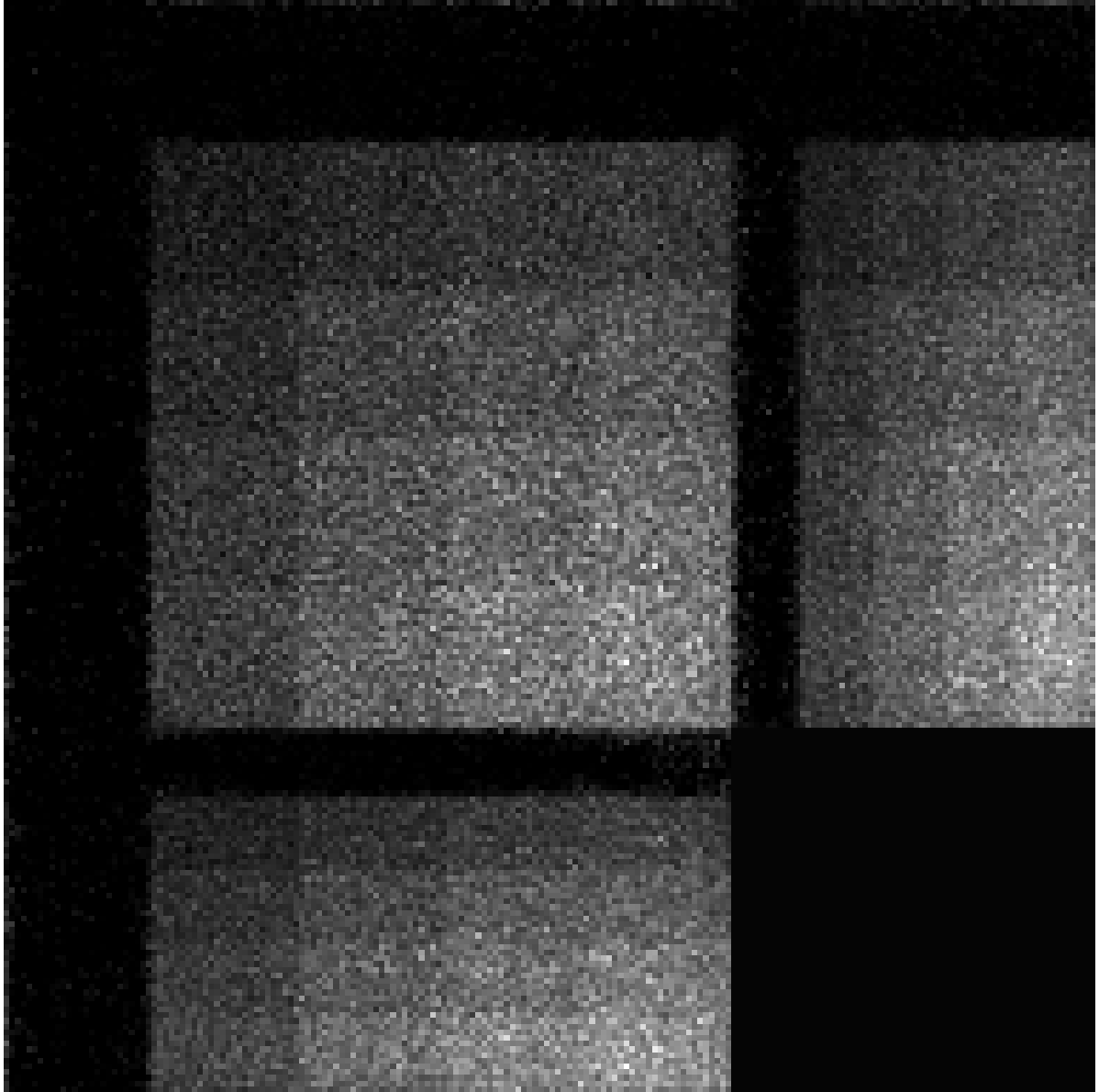}
  \includegraphics[width=0.3\textwidth]{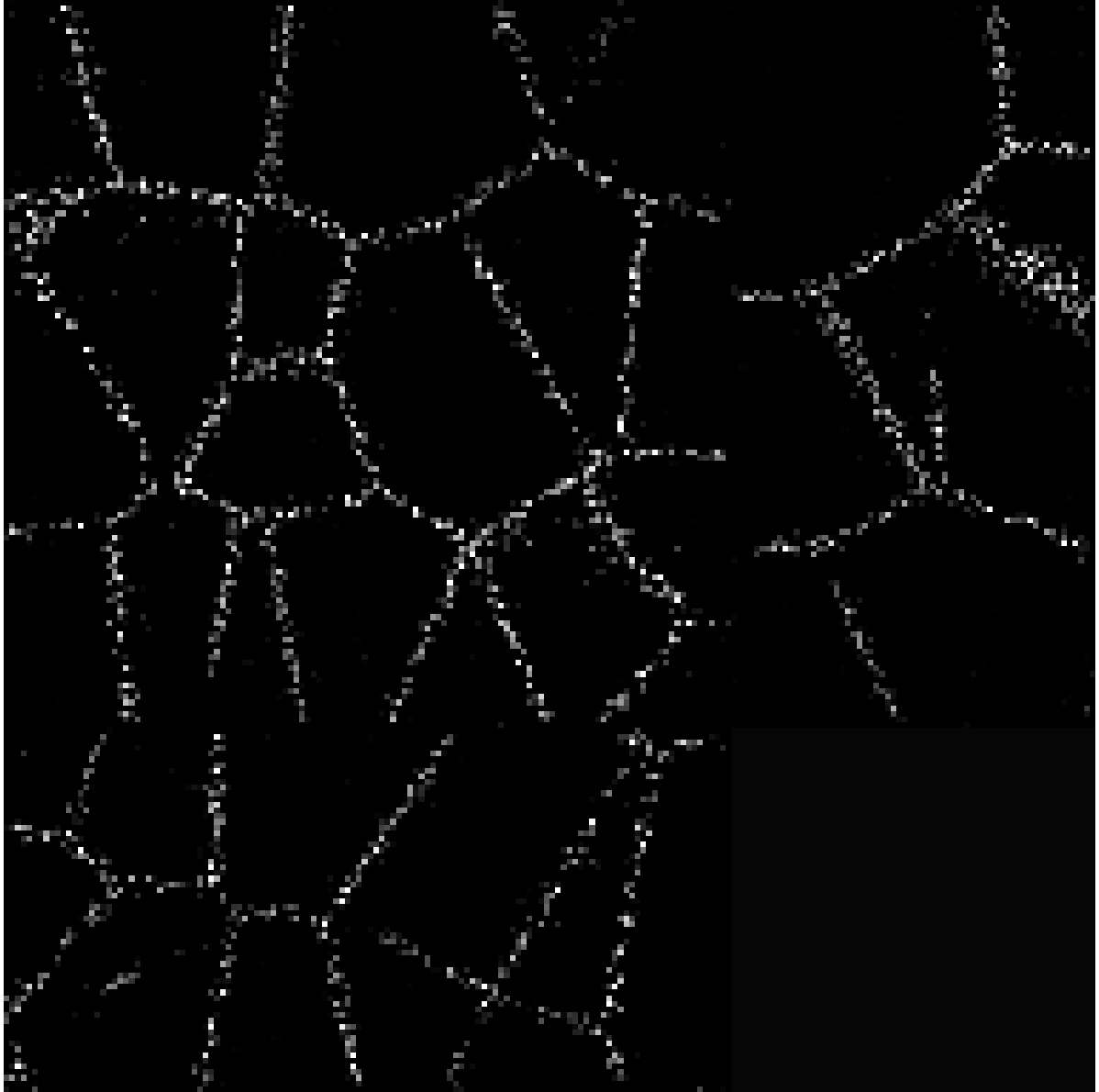}
  \includegraphics[width=0.3\textwidth]{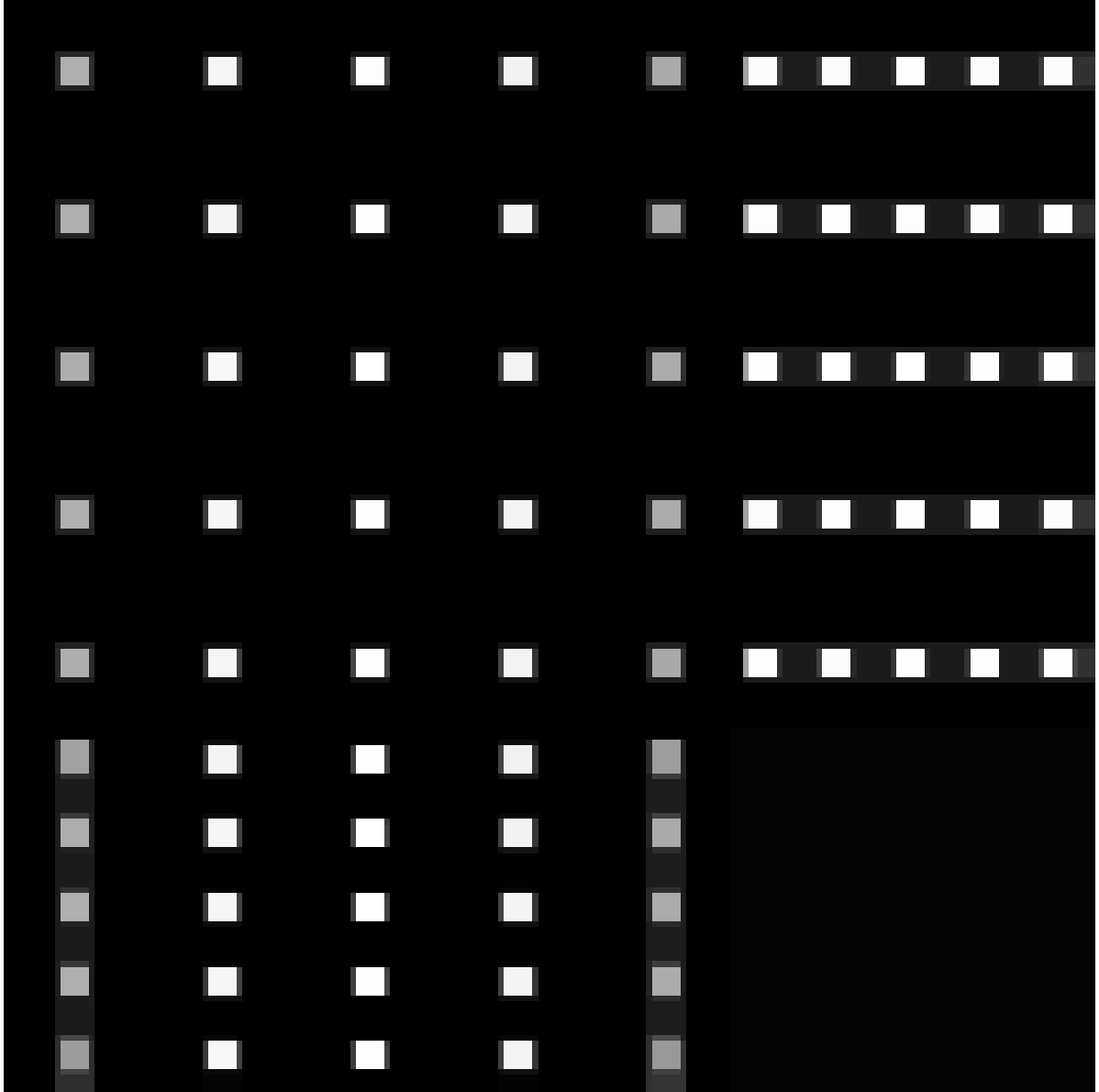}
  \includegraphics[width=0.3\textwidth]{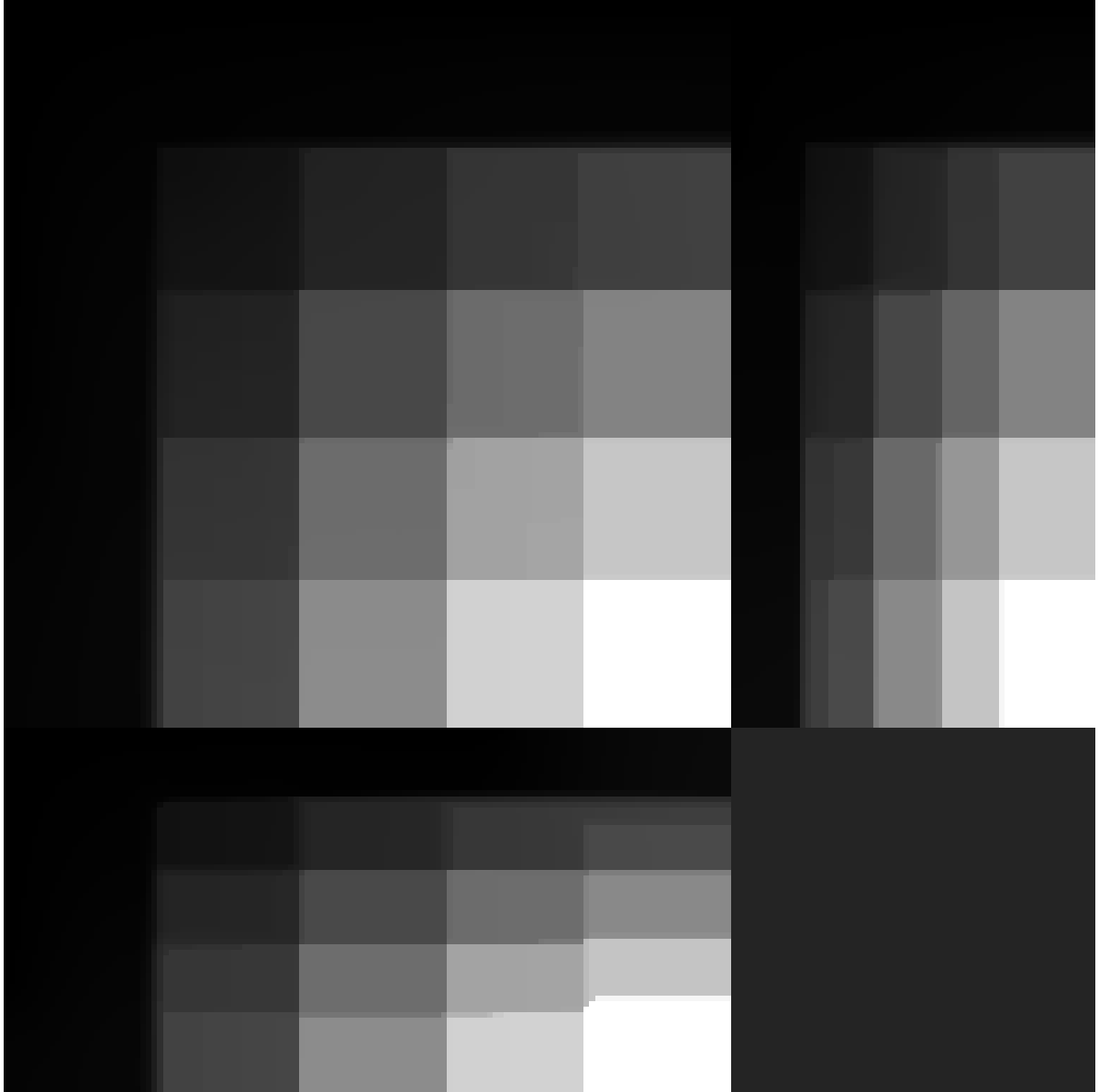}
  \includegraphics[width=0.3\textwidth]{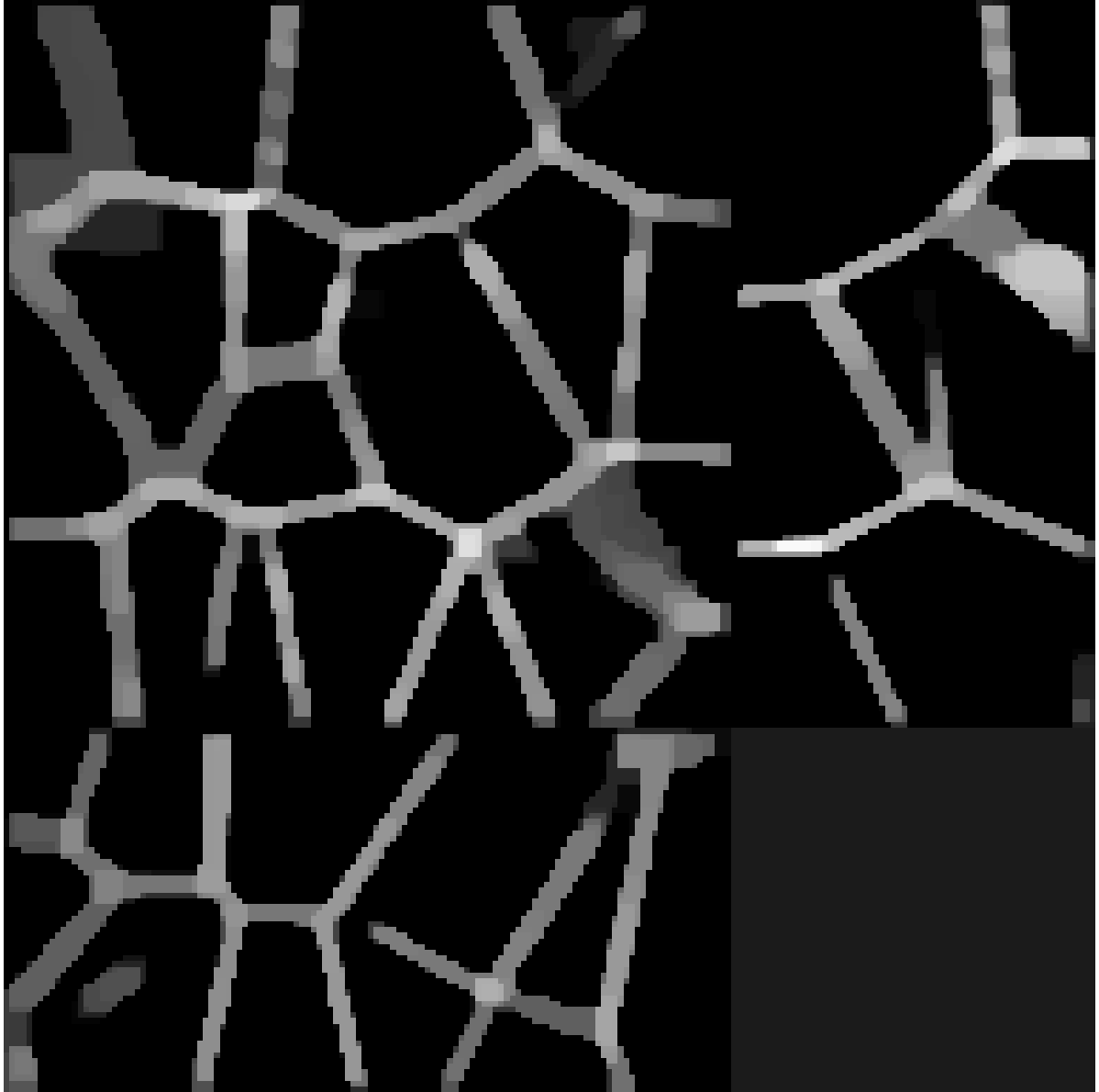}
  \caption{Reconstruction on simulated data with 
    regularisation parameter $\alpha$ chosen to satisfy 
    the discrepancy principle \eqref{eq:discr_pr1}.
    Shown as maximum intensity projections, except for tissue,
    where slices in each direction in the centre 
    of the sample are shown.
    \textbf{First row:} \ref{eq:psf l2}.
    \textbf{Second row:} \ref{eq:psf ic}.
    \textbf{Third row:} \ref{eq:ls l2}.
    \textbf{Fourth row:} LS-IC.}
  \label{fig:simulated discrepancy}
\end{figure}

  \subsection{Light-sheet data}
  \label{sec:results real data}

In this section, we show the results of applying 
LS-IC to a cropped portion of the
full resolution images in Figure \ref{fig:data example}.
Specifically, we select a cropped beads
image of $1127 \times 111 \times 100$ voxels
and a cropped Marchantia image 
of $1127 \times 156 \times 100$ voxels.

For comparison, we also run \ref{eq:psf l2} on the same
images. We run both methods on both images for up
to $6000$ iterations, with a normalised primal-dual gap
of $10^{-6}$ as a stopping criterion. The parameters for the image formation 
model used are the same as in Table~\ref{table:microscope params}
and the PDHG parameters are given 
in Table~\ref{table:pdhg params real}. 

The results of the deconvolution are shown in Figure~\ref{fig:real data beads}
and Figure~\ref{fig:real data marchantia} for the beads image 
and the Marchantia image respectively. In both figures,
we first show the position of the light-sheet in the first
row (due to the cropping, this is no longer centred), 
the measured data in the second row, followed by the \ref{eq:psf l2}  
reconstruction and the LS-IC reconstruction on
the third and fourth row respectively. The regularisation
parameter $\alpha$ was chosen in all four cases visually
such that
a balance is achieved between the amount of regularisation 
and the noise in the reconstruction.

In the beads image in Figure~\ref{fig:real data beads}, we 
note that the LS-IC performs better than \ref{eq:psf l2} 
at reversing the effect of the light-sheet. This is most
obvious in the $yz$ plane on the right-hand side of the
image, where the length of the beads in the $z$ direction
has been reduced to a greater extent than in 
the \ref{eq:psf l2} reconstruction.
In addition, the beads
appear less blurry in the LS-IC reconstruction
in the right-hand side of the $xy$ plane.
We show the bead images in 3D in Figure~\ref{fig:real data beads 3D},
where the effect of the deconvolution in the $z$ direction
is more significant in the LS-IC reconstruction than
in the \ref{eq:psf l2} reconstruction, namely the beads
are shorter in $z$.
In the Marchantia reconstruction in Figure~\ref{fig:real data marchantia},
we see a similar effect of better reconstruction in 
the $z$ direction, most easily seen in the right-hand side
and bottom projections. 
Moreover, the 3D rendering of the 
Marchantia sample in Figure~\ref{fig:real data marchantia 3d}
shows smoother cell edges in the LS-IC reconstruction,
while the \ref{eq:psf l2} reconstruction contains
reconstruction artefacts that are non-existent in the 
LS-IC reconstruction, indicated by the yellow arrows. 

\begin{table}[h]
  \centering
   \begin{tabular}{| c | c c | c c |}
     \hline
     method & LS-IC & & PSF-L2 & \\ 
     image & beads & Marchantia & beads & Marchantia \\
     \hline
     $\rho$ & 0.5 & 0.7 & 0.9 & 0.9 \\
     $\sigma$ & 0.0001 & 0.0001 & 0.01 & 0.001 \\
     \hline
   \end{tabular}
   \caption{Values of the PDHG parameters $\rho$ and $\sigma$
      used in the numerical experiments with real data.} 
   \label{table:pdhg params real}
\end{table}

\begin{figure}[H]
  \centering
  \includegraphics[width=\textwidth]{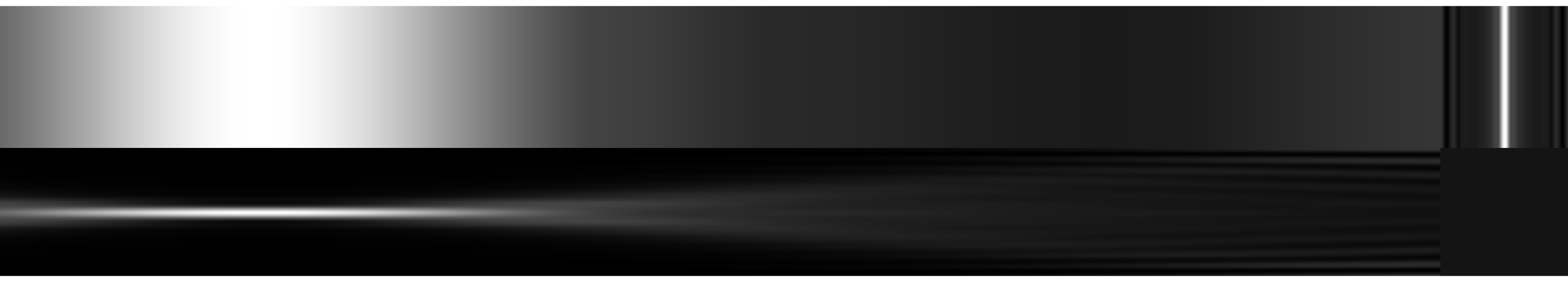}
  \includegraphics[width=\textwidth]{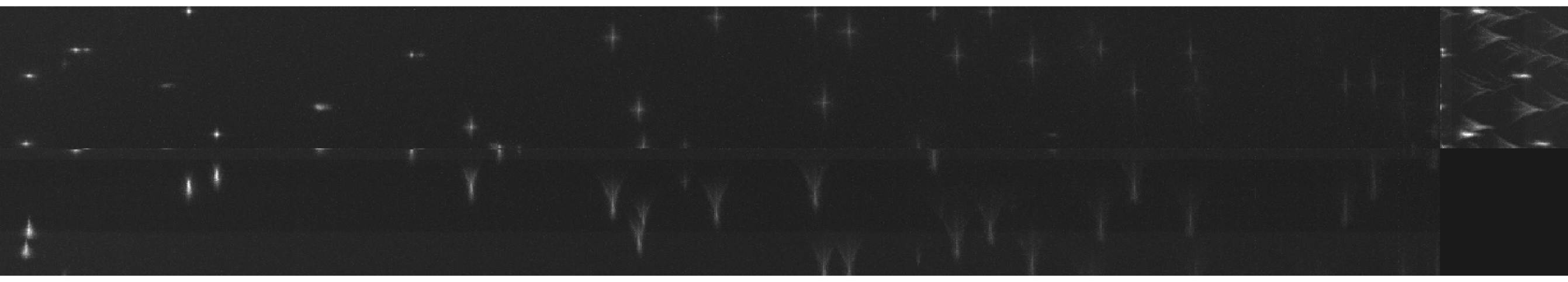}
  \includegraphics[width=\textwidth]{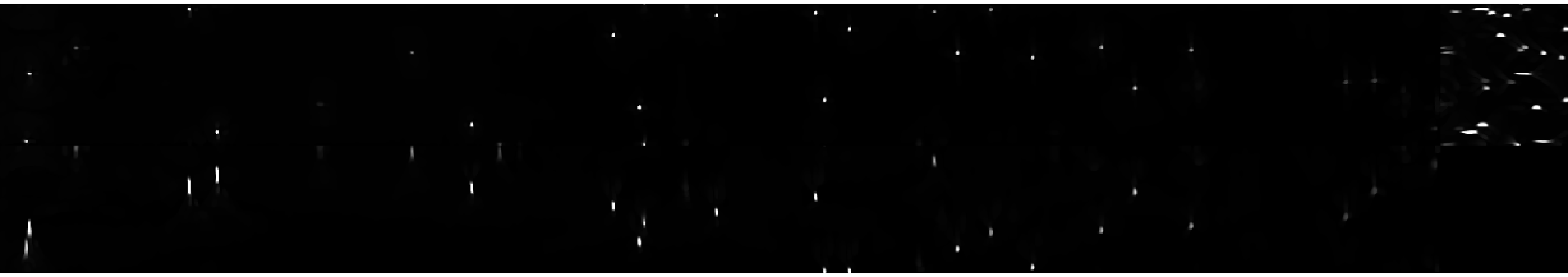}
  \includegraphics[width=\textwidth]{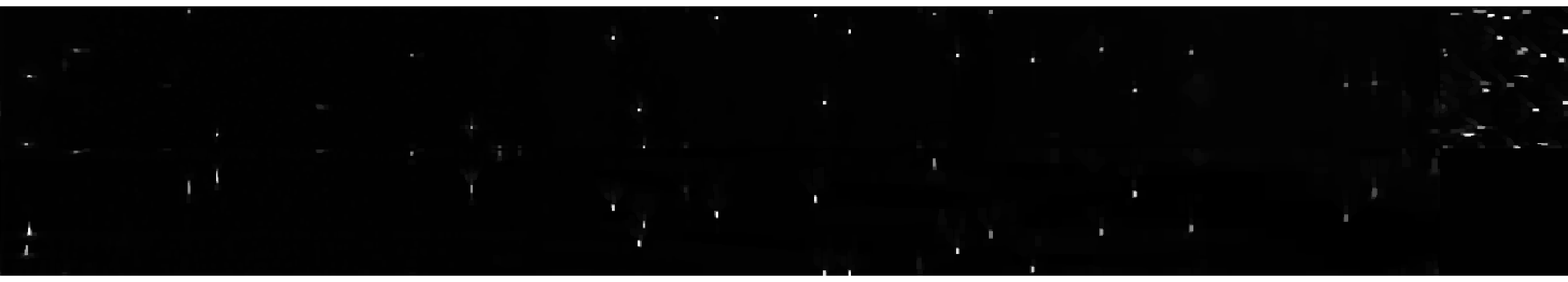}

  \caption{Reconstruction results for the light-sheet bead
    image, shown as maximum intensity projections.
    \textbf{First row:} The fitted light-sheet profile. 
    \textbf{Second row:} The data.
    \textbf{Third row:} PSF-L2 with $\alpha=0.1$.
    \textbf{Fourth row:} LS-IC with $\alpha=0.0046$.
  }
  \label{fig:real data beads}  
\end{figure}

\begin{figure}[H]
  \centering
  \includegraphics[width=\textwidth]{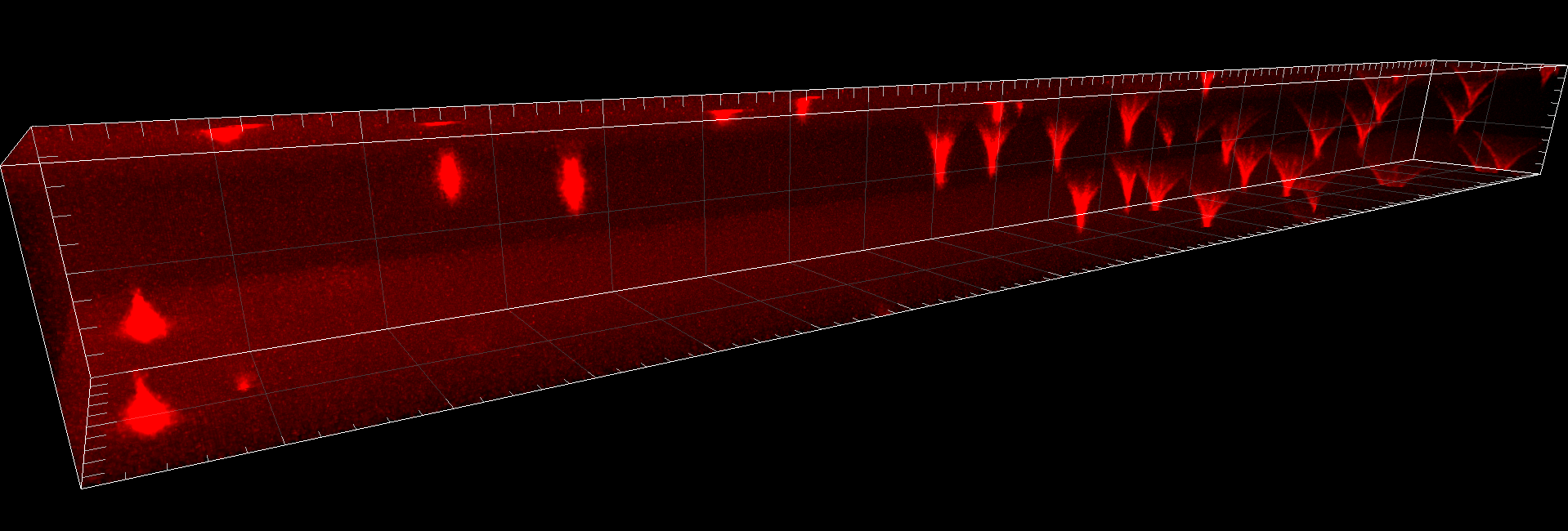}
 \includegraphics[width=\textwidth]{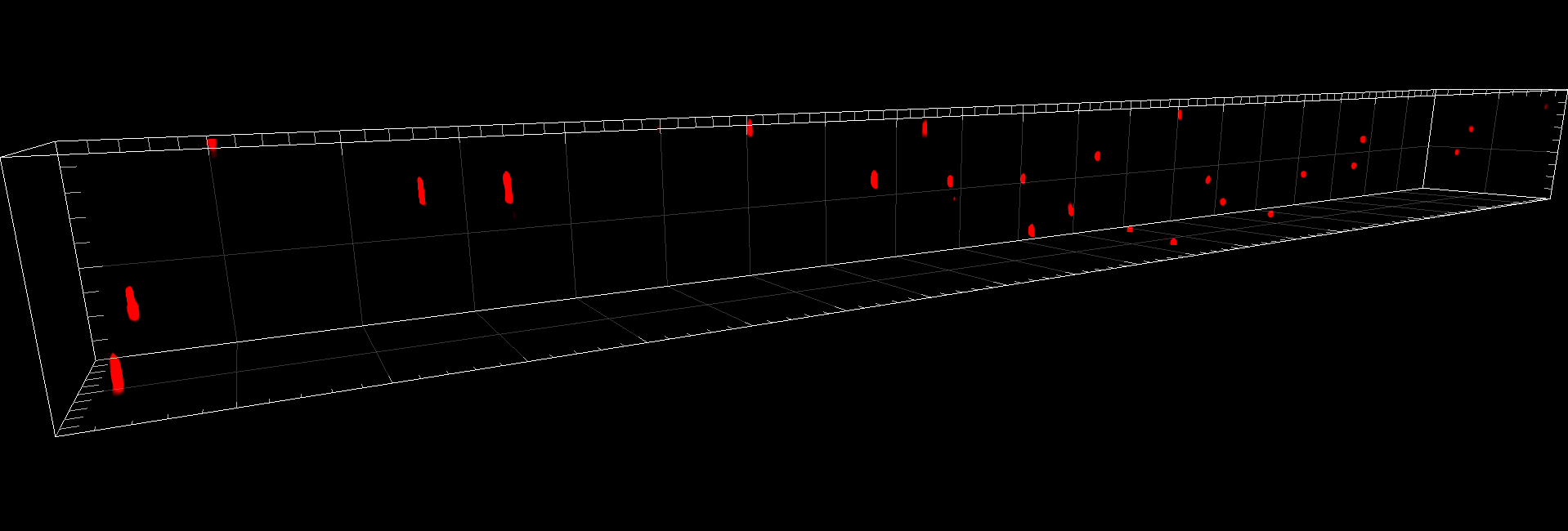}
 \includegraphics[width=\textwidth]{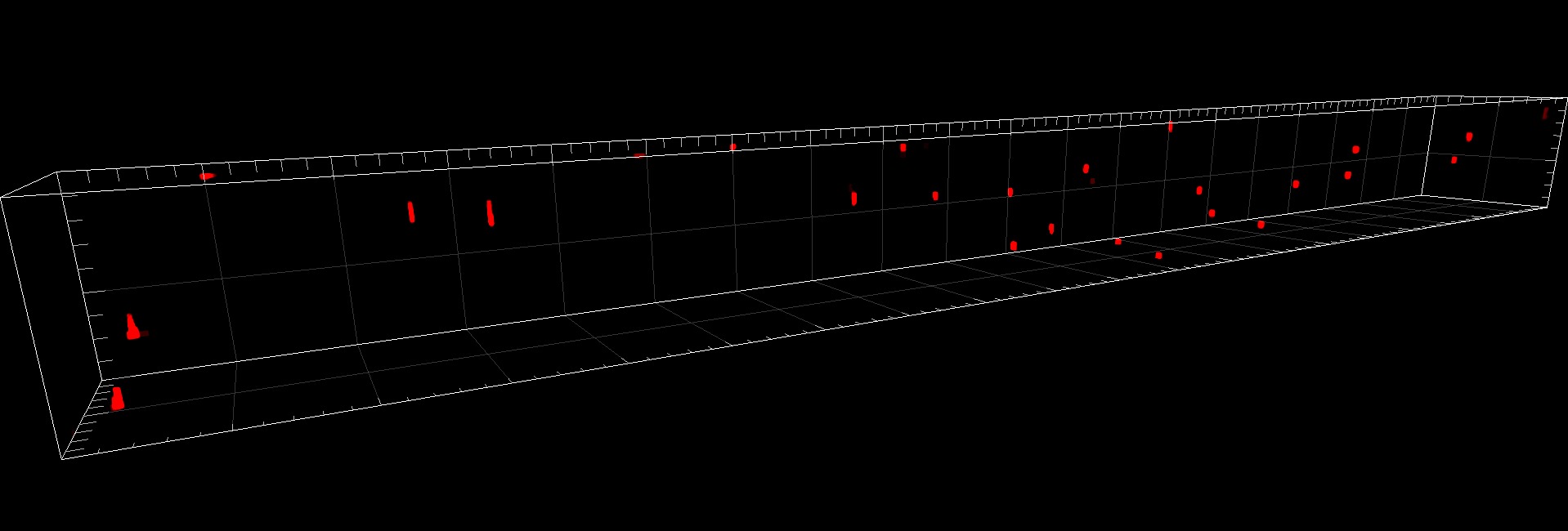}

  \caption{3D rendering of the beads data and reconstruction
    images using ImarisViewer 9.7.2.
    \textbf{First row:} The data.
    \textbf{Second row:} PSF-L2 with $\alpha=0.1$.
    \textbf{Third row:} LS-IC with $\alpha=0.0046$.
  }
  \label{fig:real data beads 3D}  
\end{figure}

\begin{figure}[H]
  \centering
  \includegraphics[width=\textwidth]{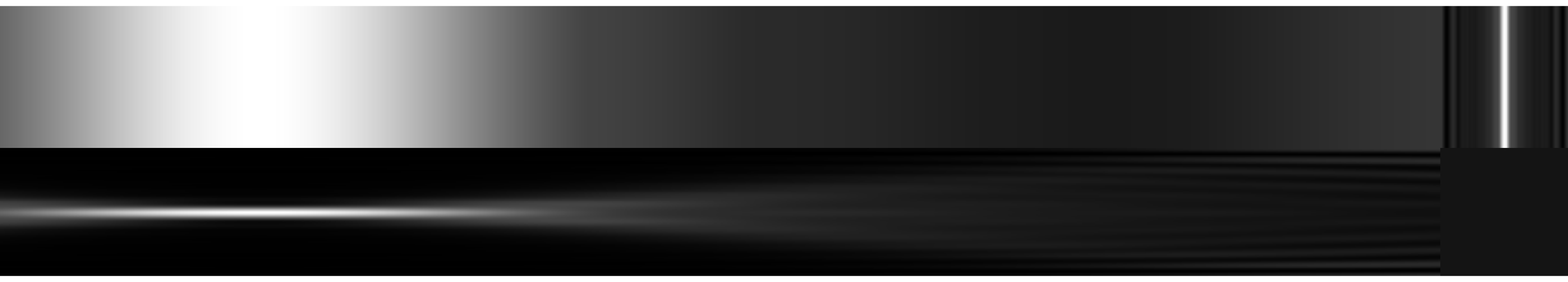}
  \includegraphics[width=\textwidth]{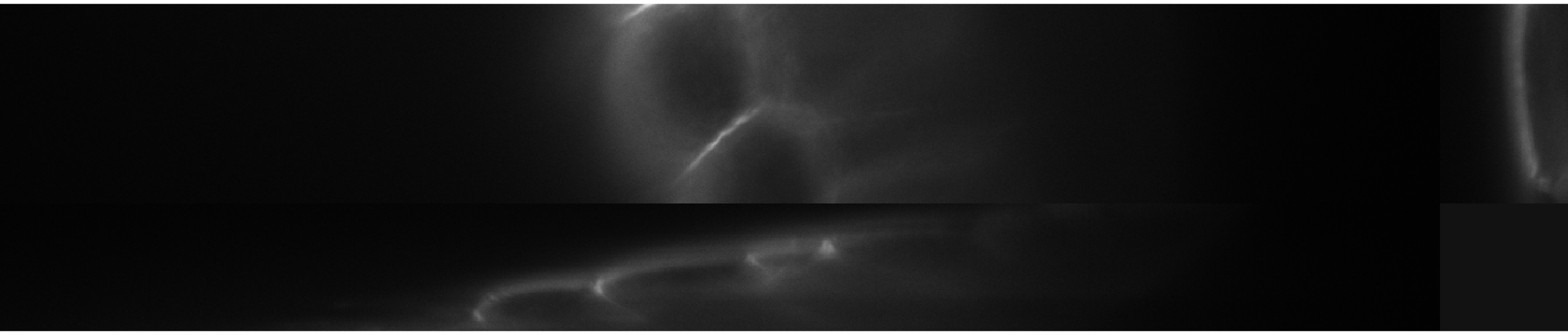}
  \includegraphics[width=\textwidth]{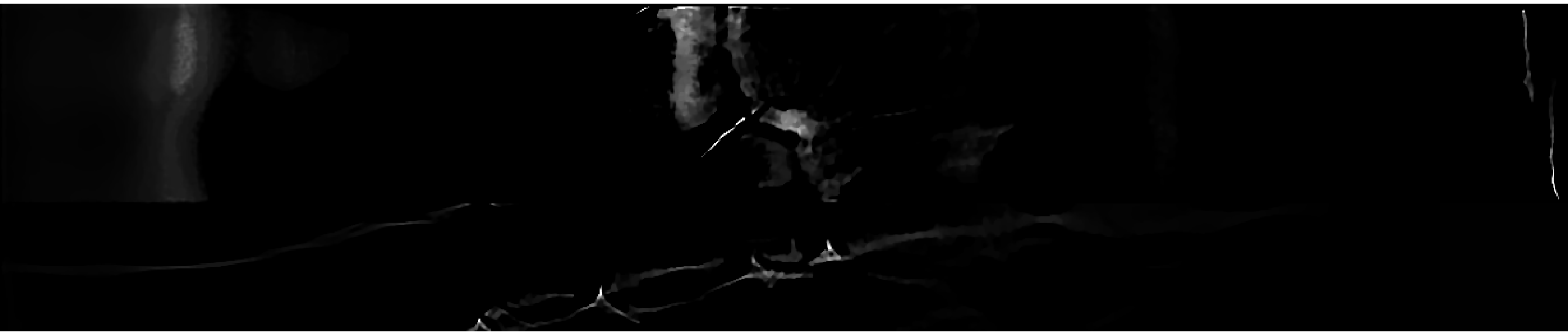}
  \includegraphics[width=\textwidth]{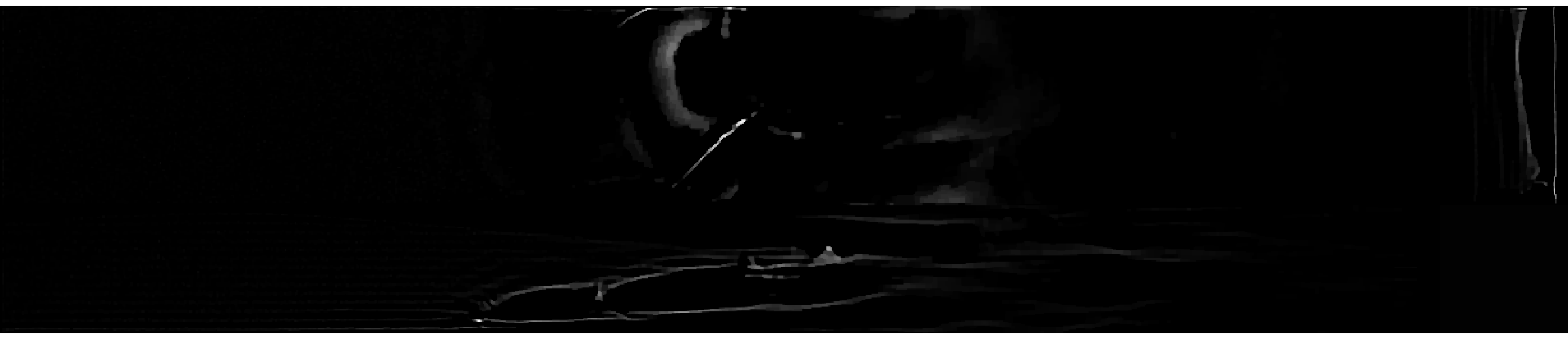}

  \caption{Reconstruction results for the Marchantia sample,
    shown as slices in each direction in the centre of the
    sample.
    \textbf{First row:} The fitted light-sheet profile. 
    \textbf{Second row:} The data.
    \textbf{Third row:} PSF-L2 with $\alpha=0.1$.
    \textbf{Fourth row:} LS-IC with $\alpha=0.0005$.
  }
  \label{fig:real data marchantia}  
\end{figure}

\begin{figure}[H]
  \centering
  \includegraphics[width=0.85\textwidth]{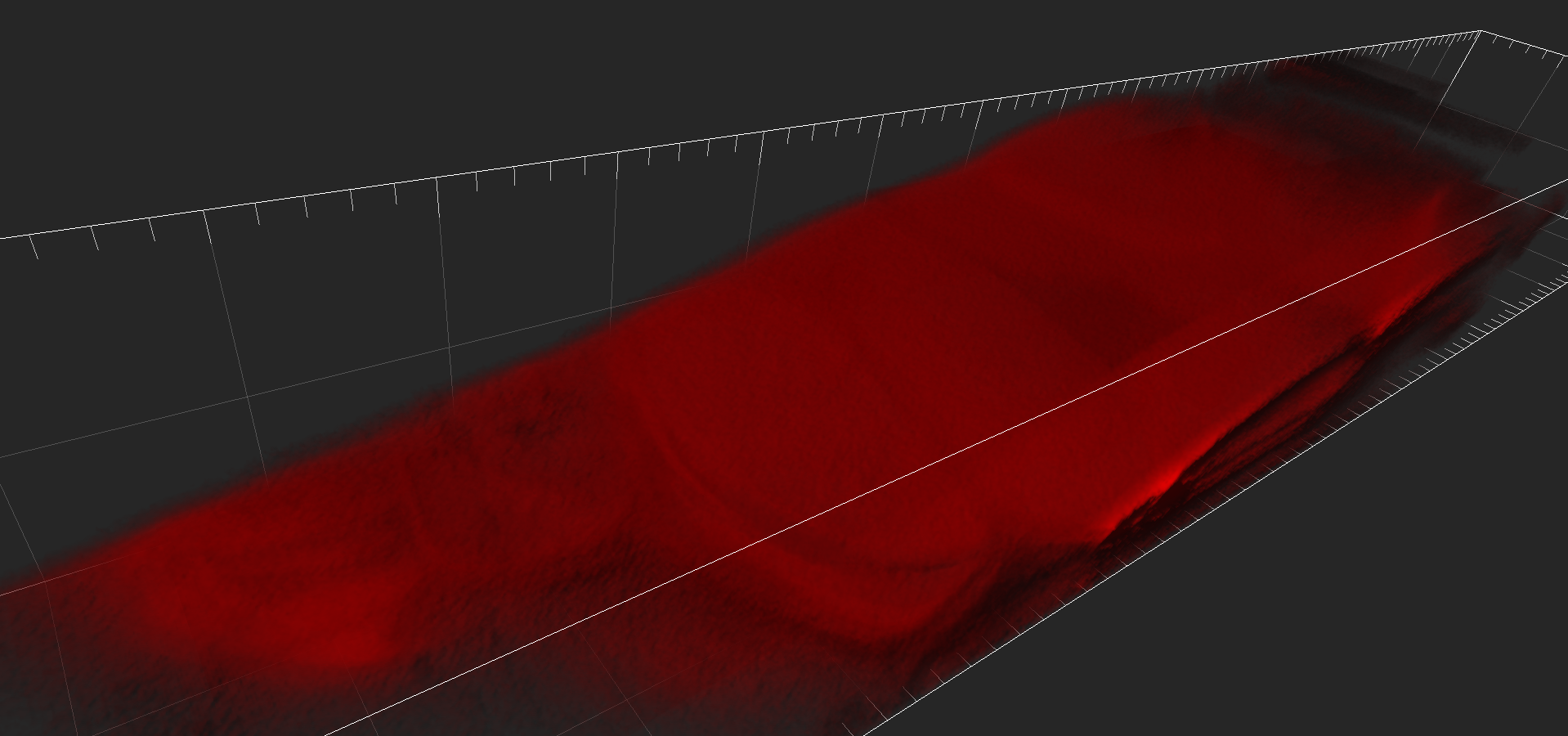}
  \includegraphics[width=0.85\textwidth]{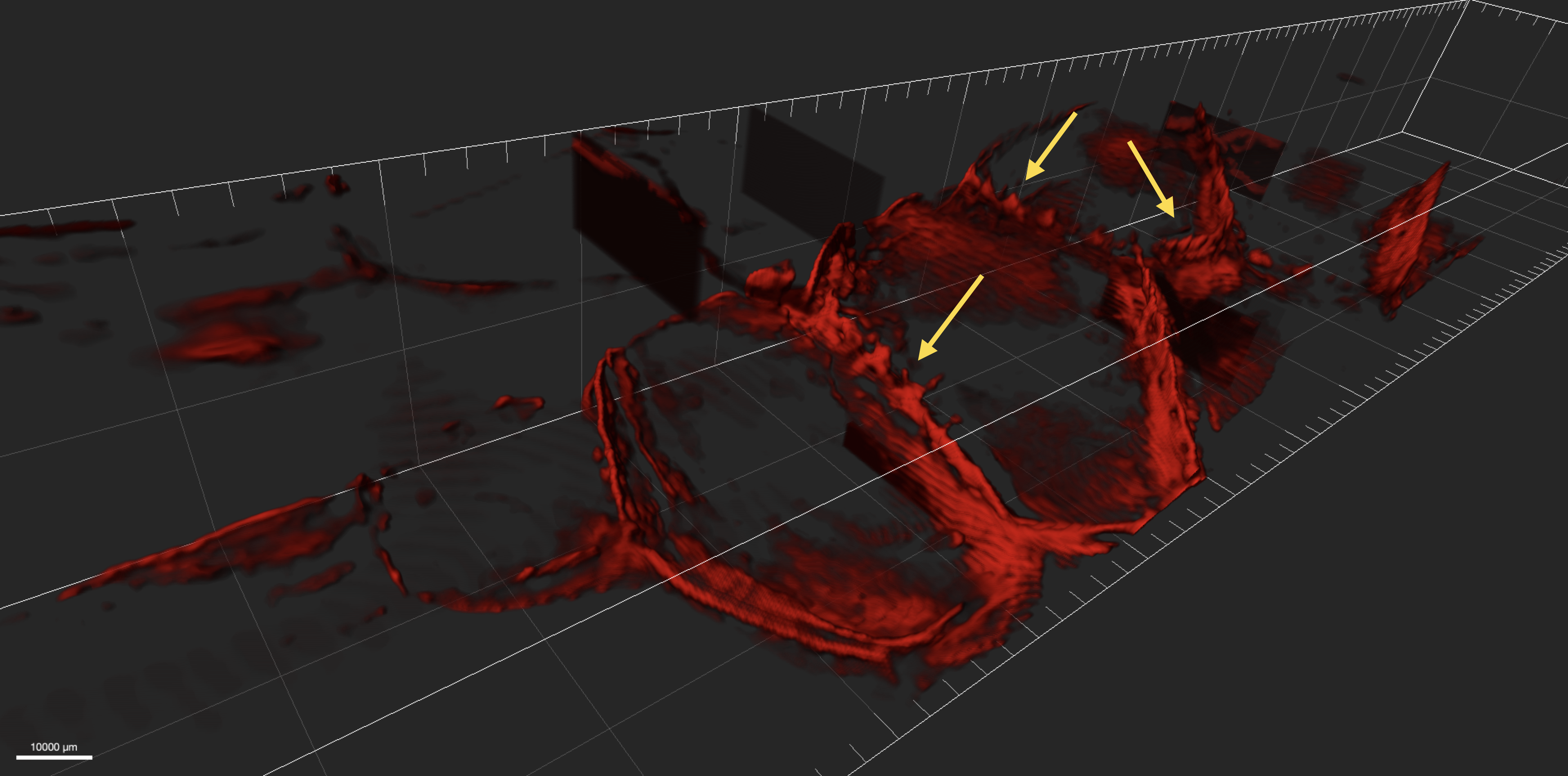}
  \includegraphics[width=0.85\textwidth]{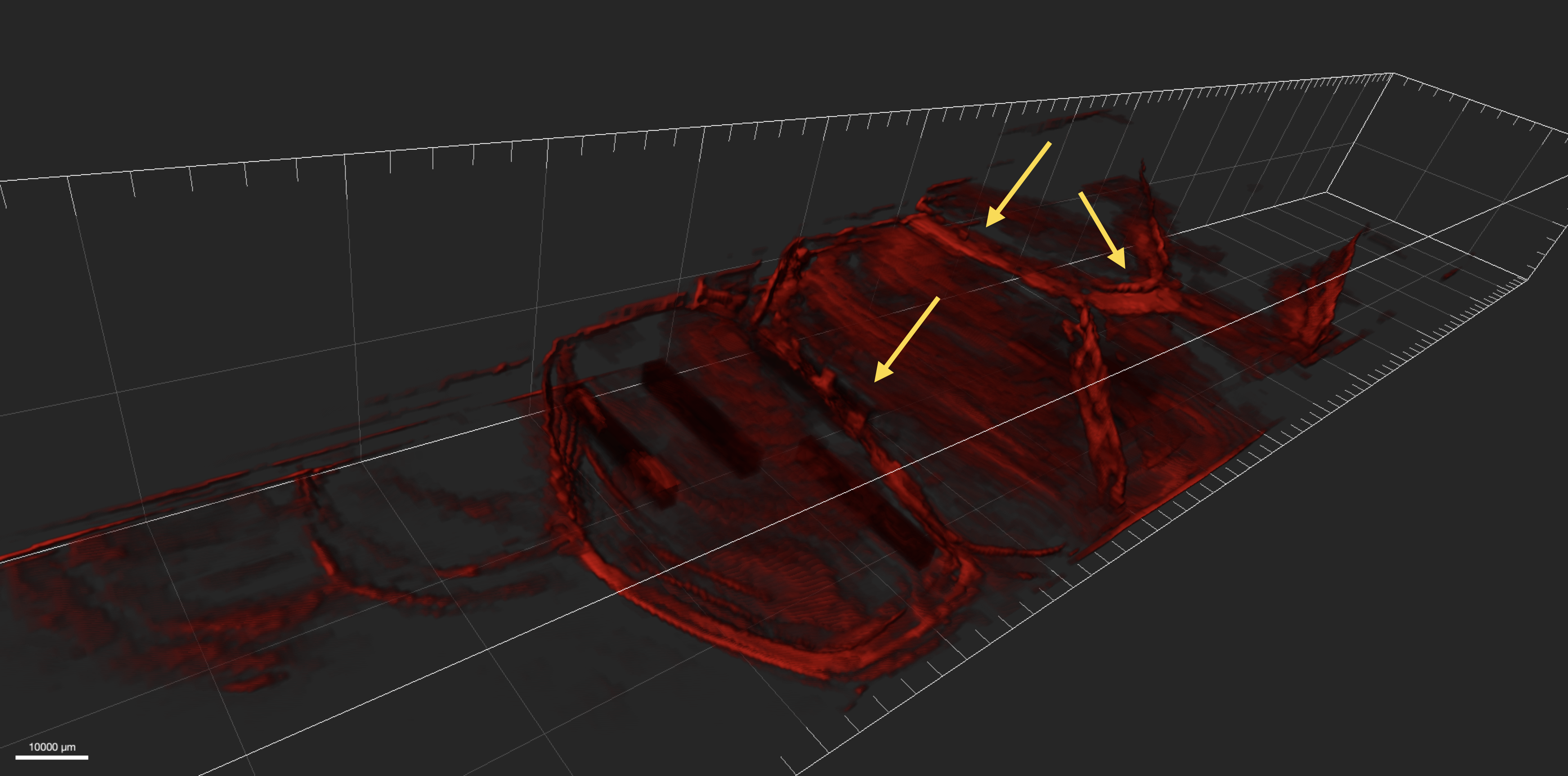}

  \caption{3D rendering of the Marchantia data and 
    reconstruction images using ImarisViewer 9.7.2.
    \textbf{First row:} The data.
    \textbf{Second row:} PSF-L2 with $\alpha=0.1$.
    \textbf{Third row:} LS-IC with $\alpha=0.0005$.
  }
  \label{fig:real data marchantia 3d}  
\end{figure}

\section{Conclusion}
\label{sec:conclusion}

In this paper we introduced a novel method for performing
deconvolution for light-sheet microscopy. 
We start by modelling the image formation process in a way 
that replicates the physics of a light-sheet microscope, 
which is achieved by explicitly modelling the interaction 
of the illumination light-sheet and the detection 
objective PSF. Moreover, the optical aberrations in the 
system are modelled using a linear combination of
Zernike polynomials in the pupil function of the detection
PSF, fitted to bead data using a least squares procedure. 
We then formulate a variational
model taking into account the image formation model as the
forward operator and a combination of Poisson and Gaussian
noise in the data. The model combines a total variation
regularisation term and a fidelity term that is an infimal
convolution between an $\L^2$ term and the Kullback-Leibler
divergence, introduced in~\cite{calatroni2017infimal}.
In addition, we establish convergence rates with respect to 
the noise and we introduce a discrepancy principle for
selecting the regularisation parameter $\alpha$ in the mixed noise
setting. We solve the resulting inverse problem by applying the
PDHG algorithm in a non-trivial way.

The results in the numerical experiments section show that our
method, LS-IC, outperforms simpler approaches to deconvolution of
light-sheet microscopy data, where one does not take into
account the variability of the overall PSF introduced by the
light-sheet excitation, or the combination of Gaussian and Poisson
noise. In particular, numerical experiments with simulated
data show superior reconstruction quality in terms of the
normalised $l^2$ error and the structural similarity index, not
only by optimising over the regularisation parameter
$\alpha$ given the ground truth, but also with an a
posteriori choice of $\alpha$ using the stated discrepancy
principle. On bead data, the reconstruction obtained using 
LS-IC shows a more significant reduction of the blur in the
$z$ direction compared to \ref{eq:psf l2}, where the
light-sheet variations and the Poisson noise are not taken into
account. Moreover, reconstruction of a Marchantia
sample with LS-IC shows fewer artefacts than the
\ref{eq:psf l2} reconstruction, as well as sharper cell 
edges and smoother cell membranes.

Future work includes applying this technique to a broader
range of samples and using it to answer questions of
biological interest. To do so, we see a number of potential 
future directions that this work can take:
\begin{itemize}
  \item Adapting the discrepancy principle given in
    \eqref{eq:discr_pr1} for choosing the regularisation
    parameter $\alpha$ to real data sets, like the ones in
    Section~\ref{sec:results real data}.

  \item Improving the running time of the method potentially
    by means of randomised approaches.
  
  \item Investigating other regularisation terms.
  
  \item Making the technique available to other users as a
    more user-friendly tool.

\end{itemize}

\section{Acknowledgements}
BT and LM gratefully acknowledge the funding by 
Isaac Newton Trust/Wellcome Trust ISSF/University of Cambridge
Joint Research Grants Scheme and EPSRC EP/R025398/1.
MOL and LM also thank the Gatsby Charitable Foundation for financial support. 
YK acknowledges financial support of the EPSRC (Fellowship EP/V003615/1), the Cantab Capital Institute for the Mathematics of Information at the University of Cambridge and the National Physical Laboratory. 
CBS acknowledges support from the Philip Leverhulme Prize, the Royal Society Wolfson Fellowship, the EPSRC grants EP/S026045/1 and EP/T003553/1, EP/N014588/1, EP/T017961/1, the Wellcome Innovator Award RG98755, the Leverhulme Trust project Unveiling the invisible, the European Union Horizon 2020 research and innovation programme under the Marie Sk{\l}odowska-Curie grant agreement No. 777826 NoMADS, the Cantab Capital Institute for the Mathematics of Information and the Alan Turing Institute.

Imaging was performed at the
Microcopy Facility of the Sainsbury Laboratory Cambridge University. 
We thank Dr. Alessandra Bonfanti and Dr. Sarah Robinson for
providing the Marchantia sample and Prof. Sebastian
Schornack and Dr. Giulia Arsuffi
(Sainsbury Laboratory Cambridge University) 
for provision of the line of Marchantia used.

We also acknowledge the support of NVIDIA Corporation with the donation of two Quadro P6000, a Tesla K40c and a Titan Xp GPU used for this research.

\printbibliography

@article{resmerita:2007-KL,
author = {Resmerita, Elena and Anderssen, Robert S.},
title = {Joint additive Kullback–Leibler residual minimization and regularization for linear inverse problems},
journal = {Mathematical Methods in the Applied Sciences},
volume = {30},
number = {13},
pages = {1527-1544},
keywords = {ill-posed problems, regularization, Kullback–Leibler distance, information, Radon–Nikodym theorem, Banach space adjoint operator},
doi = {https://doi.org/10.1002/mma.855},
url = {https://onlinelibrary.wiley.com/doi/abs/10.1002/mma.855},
eprint = {https://onlinelibrary.wiley.com/doi/pdf/10.1002/mma.855},
abstract = {Abstract For the approximate solution of ill-posed inverse problems, the formulation of a regularization functional involves two separate decisions: the choice of the residual minimizer and the choice of the regularizor. In this paper, the Kullback–Leibler functional is used for both. The resulting regularization method can solve problems for which the operator and the observational data are positive along with the solution, as occur in many inverse problem applications. Here, existence, uniqueness, convergence and stability for the regularization approximations are established under quite natural regularity conditions. Convergence rates are obtained by using an a priori strategy. Copyright © 2007 John Wiley \& Sons, Ltd.},
year = {2007}
}

@article{bungert:2020-arbitrary,
	abstract = {We study variational regularisation methods for inverse problems with imperfect forward operators whose errors can be modelled by order intervals in a partial order of a Banach lattice. We carry out analysis with respect to existence and convex duality for general data fidelity terms and regularisation functionals. Both for a priori and a posteriori parameter choice rules, we obtain convergence rates of the regularised solutions in terms of Bregman distances. Our results apply to fidelity terms such as Wasserstein distances, φ-divergences, norms, as well as sums and infimal convolutions of those.},
	author = {Leon Bungert and Martin Burger and Yury Korolev and Carola-Bibiane Sch{\"o}nlieb},
	doi = {10.1088/1361-6420/abc531},
	journal = {Inverse Problems},
	month = {dec},
	number = {12},
	pages = {125014},
	publisher = {{IOP} Publishing},
	title = {Variational regularisation for inverse problems with imperfect forward operators and general noise models},
	url = {https://doi.org/10.1088/1361-6420/abc531},
	volume = {36},
	year = 2020,
	Bdsk-Url-1 = {https://doi.org/10.1088/1361-6420/abc531}}

@book{bennett-sharpley:1988,
author={C. Bennett and R. Sharpley},
title={Interpolation of Operators},
year={1988},
publisher={Academic Press},
address={Boston MA},
series= {Pure and Applied Mathematics},
volume={129}
}

@article{hohage2013poisson,
	Abstract = {We study Newton type methods for inverse problems described by nonlinear operator equations {\$}{\$}F(u)=g{\$}{\$}in Banach spaces where the Newton equations {\$}{\$}F\^{}{\{}{$\backslash$}prime {\}}(u{\_}n;u{\_}{\{}n+1{\}}-u{\_}n) = g-F(u{\_}n){\$}{\$}are regularized variationally using a general data misfit functional and a convex regularization term. This generalizes the well-known iteratively regularized Gauss--Newton method (IRGNM). We prove convergence and convergence rates as the noise level tends to {\$}{\$}0{\$}{\$}both for an a priori stopping rule and for a Lepski{\u \i}-type a posteriori stopping rule. Our analysis includes previous order optimal convergence rate results for the IRGNM as special cases. The main focus of this paper is on inverse problems with Poisson data where the natural data misfit functional is given by the Kullback--Leibler divergence. Two examples of such problems are discussed in detail: an inverse obstacle scattering problem with amplitude data of the far-field pattern and a phase retrieval problem. The performance of the proposed method for these problems is illustrated in numerical examples.},
	Author = {Hohage, Thorsten and Werner, Frank},
	Da = {2013/04/01},
	Date-Added = {2020-09-25 12:14:31 +0100},
	Date-Modified = {2020-09-25 12:14:31 +0100},
	Doi = {10.1007/s00211-012-0499-z},
	Id = {Hohage2013},
	Isbn = {0945-3245},
	Journal = {Numerische Mathematik},
	Number = {4},
	Pages = {745--779},
	Title = {Iteratively regularized Newton-type methods for general data misfit functionals and applications to Poisson data},
	Ty = {JOUR},
	Url = {https://doi.org/10.1007/s00211-012-0499-z},
	Volume = {123},
	Year = {2013},
	Bdsk-Url-1 = {https://doi.org/10.1007/s00211-012-0499-z}}

@article{hohage2016poisson,
	Abstract = {Inverse problems with Poisson data arise in many photonic imaging modalities in medicine, engineering and astronomy. The design of regularization methods and estimators for such problems has been studied intensively over the last two decades. In this review we give an overview of statistical regularization theory for such problems, the most important applications, and the most widely used algorithms. The focus is on variational regularization methods in the form of penalized maximum likelihood estimators, which can be analyzed in a general setup. Complementing a number of recent convergence rate results we will establish consistency results. Moreover, we discuss estimators based on a wavelet-vaguelette decomposition of the (necessarily linear) forward operator. As most prominent applications we briefly introduce Positron emission tomography, inverse problems in fluorescence microscopy, and phase retrieval problems. The computation of a penalized maximum likelihood estimator involves the solution of a (typically convex) minimization problem. We also review several efficient algorithms which have been proposed for such problems over the last five years.},
	Author = {Thorsten Hohage and Frank Werner},
	Date-Added = {2020-09-25 11:46:40 +0100},
	Date-Modified = {2020-09-25 11:46:40 +0100},
	Doi = {10.1088/0266-5611/32/9/093001},
	Journal = {Inverse Problems},
	Month = {jul},
	Number = {9},
	Pages = {093001},
	Publisher = {{IOP} Publishing},
	Title = {Inverse problems with Poisson data: statistical regularization theory, applications and algorithms},
	Url = {https://doi.org/10.1088%2F0266-5611%2F32%2F9%2F093001},
	Volume = {32},
	Year = 2016,
	Bdsk-Url-1 = {https://doi.org/10.1088%2F0266-5611%2F32%2F9%2F093001},
	Bdsk-Url-2 = {https://doi.org/10.1088/0266-5611/32/9/093001}}

@ARTICLE{Morozov:1966, 
author={V. A. Morozov}, 
journal={Soviet Math. Dokl.}, 
title={On the solution of functional equations by the method of regularisation}, year={1966}, 
volume={7}, 
pages={414-417}
}

@article{clason:2021-OT,
	abstract = {We analyze continuous optimal transport problems in the so-called Kantorovich form, where we seek a transport plan between two marginals that are probability measures on compact subsets of Euclidean space. We consider the case of regularization with the negative entropy with respect to the Lebesgue measure, which has attracted attention because it can be solved by the very simple Sinkhorn algorithm. We first analyze the regularized problem in the context of classical Fenchel duality and derive a strong duality result for a predual problem in the space of continuous functions. However, this problem may not admit a minimizer, which prevents obtaining primal-dual optimality conditions. We then show that the primal problem is naturally analyzed in the Orlicz space of functions with finite entropy in the sense that the entropically regularized problem admits a minimizer if and only if the marginals have finite entropy. We then derive a dual problem in the corresponding dual space, for which existence can be shown by purely variational arguments and primal-dual optimality conditions can be derived. For marginals that do not have finite entropy, we finally show Gamma-convergence of the regularized problem with smoothed marginals to the original Kantorovich problem.},
	author = {Christian Clason and Dirk A. Lorenz and Hinrich Mahler and Benedikt Wirth},
	doi = {https://doi.org/10.1016/j.jmaa.2020.124432},
	issn = {0022-247X},
	journal = {Journal of Mathematical Analysis and Applications},
	keywords = {Optimal transport, Entropic regularization, Duality, , Gamma convergence},
	number = {1},
	pages = {124432},
	title = {Entropic regularization of continuous optimal transport problems},
	url = {https://www.sciencedirect.com/science/article/pii/S0022247X20305941},
	volume = {494},
	year = {2021},
	Bdsk-Url-1 = {https://www.sciencedirect.com/science/article/pii/S0022247X20305941},
	Bdsk-Url-2 = {https://doi.org/10.1016/j.jmaa.2020.124432}}

@article{Sixou:2018,
title = {Morozov principle for {K}ullback-{L}eibler residual term and {P}oisson noise},
journal = {Inverse Problems \& Imaging},
volume = {12},
number = {3},
pages = {607-634},
year = {2018},
doi = {10.3934/ipi.2018026},
author = {Bruno Sixou and Tom Hohweiller and Nicolas Ducros}
}

@article{Benning_Burger_modern:2018,
	Author = {Martin Benning and Martin Burger},
	Date-Added = {2018-04-05 13:31:27 +0000},
	Date-Modified = {2019-06-20 17:01:01 +0100},
	Journal = {Acta Numerica},
	Pages = {1-111},
	Title = {Modern Regularization Methods for Inverse Problems},
	Volume = {27},
	Year = {2018}}

@incollection{Burger_Osher_TV_Zoo,
	Author = {Martin Burger and Stanley Osher},
	Booktitle = {Level-Set and PDE-based Reconstruction Methods},
	Date-Added = {2018-03-06 13:41:56 +0000},
	Date-Modified = {2018-03-06 13:43:13 +0000},
	Editor = {Martin Burger and Stanley Osher},
	Publisher = {Springer},
	Title = {A guide to the TV zoo},
	Year = {2013}}

@article{Burger_Osher:2004,
	Abstract = {The aim of this paper is to provide quantitative estimates for the minimizers of non-quadratic regularization problems in terms of the regularization parameter, respectively the noise level. As usual for ill-posed inverse problems, these estimates can be obtained only under additional smoothness assumptions on the data, the so-called source conditions, which we identify with the existence of Lagrange multipliers for a limit problem. Under such a source condition, we shall prove a quantitative estimate for the Bregman distance induced by the regularization functional, which turns out to be the natural distance measure to use in this case. We put a special emphasis on the case of total variation regularization, which is probably the most important and prominent example in this class. We discuss the source condition for this case in detail and verify that it still allows discontinuities in the solution, while imposing some regularity on its level sets.},
	Author = {Martin Burger and Stanley Osher},
	Date-Added = {2016-05-27 18:34:03 -0600},
	Date-Modified = {2016-05-27 18:34:17 -0600},
	Journal = {Inverse Problems},
	Number = {5},
	Pages = {1411},
	Title = {Convergence rates of convex variational regularization},
	Url = {http://stacks.iop.org/0266-5611/20/i=5/a=005},
	Volume = {20},
	Year = {2004},
	Bdsk-Url-1 = {http://stacks.iop.org/0266-5611/20/i=5/a=005}}

@book{engl:1996,
	Author = {Engl, {H. W.} and Hanke, M. and Neubauer, A.},
	Date-Added = {2015-04-25 15:06:45 +0100},
	Date-Modified = {2015-04-25 15:08:22 +0100},
	Publisher = {Springer},
	Title = {Regularization of Inverse Problems},
	Year = {1996},
	Bdsk-Url-1 = {http://books.google.co.uk/books?id=2bzgmMv5EVcC}}

@article{ROF,
	Author = {Leonid I. Rudin and Stanley Osher and Emad Fatemi},
	Date-Modified = {2018-03-16 18:47:59 +0000},
	Doi = {10.1016/0167-2789(92)90242-F},
	Issn = {0167-2789},
	Journal = {Physica D: Nonlinear Phenomena},
	Number = {1},
	Pages = {259 - 268},
	Title = {Nonlinear total variation based noise removal algorithms},
	Url = {http://www.sciencedirect.com/science/article/pii/016727899290242F},
	Volume = {60},
	Year = {1992},
	Bdsk-Url-1 = {http://www.sciencedirect.com/science/article/pii/016727899290242F},
	Bdsk-Url-2 = {http://dx.doi.org/10.1016/0167-2789(92)90242-F}}

@book{Bauschke_Combettes:2011,
	Author = {Heinz H. Bauschke and Patrick L. Combettes},
	Publisher = {Springer},
	Title = {Convex Analysis and Monotone Operator Theory in Hilbert Spaces},
	Year = 2011}

@article{zanella2009,
	doi = {10.1088/0266-5611/25/4/045010},
	url = {https://doi.org/10.1088/0266-5611/25/4/045010},
	year = 2009,
	month = {feb},
	publisher = {{IOP} Publishing},
	volume = {25},
	number = {4},
	pages = {045010},
	author = {Zanella, Riccardo and Boccacci, Patrizia and
    Zanni, Luca and Bertero, Mario},
	title = {Efficient gradient projection methods for edge-preserving removal of Poisson noise},
	journal = {Inverse Problems},
}

@article{zhang2021,
  title = {{Bilinear constraint based ADMM for mixed Poisson-Gaussian noise removal}},
  journal = {Inverse Problems \& Imaging},
  volume = {15},
  number = {2},
  pages = {339-366},
  year = {2021},
  author = {Jie Zhang and Yuping Duan and Yue Lu and Michael K. Ng and Huibin Chang},
}

@article{zhang2020,
  author = {Zhe Zhang and Dongzhou Gou and Fan Feng and Ruyi Zheng and Ke Du and Hongrun Yang and Guangyi Zhang and Huitao Zhang and Louis Tao and Liangyi Chen and Heng Mao},
  journal = {Photon. Res.},
  keywords = {Gaussian beams; Image processing; Image reconstruction; Selective plane illumination microscopy; Three dimensional image processing; Three dimensional imaging},
  number = {6},
  pages = {1011--1021},
  publisher = {OSA},
  title = {{3D Hessian deconvolution of thick light-sheet z-stacks for high-contrast and high-SNR volumetric imaging}},
  volume = {8},
  month = {Jun},
  year = {2020},
  url = {http://www.osapublishing.org/prj/abstract.cfm?URI=prj-8-6-1011},
  doi = {10.1364/PRJ.388651},
}

@article{lindblad:1973,
	abstract = {The conditional entropy between two states of a quantum system is shown to be nonincreasing when a complete measurement is performed on the system. The information between two quantum systems is defined and is shown to be bounded above by the logarithmic correlation. This inequality is then applied to the measurement process. The entropy changes in the observed system and the measuring apparatus are compared with the information gain in the measurement.},
	author = {Lindblad, G{\"o}ran},
	da = {1973/12/01},
	date-added = {2021-06-09 16:29:36 +0100},
	date-modified = {2021-06-09 16:29:36 +0100},
	doi = {10.1007/BF01646743},
	id = {Lindblad1973},
	isbn = {1432-0916},
	journal = {Communications in Mathematical Physics},
	number = {4},
	pages = {305--322},
	title = {Entropy, information and quantum measurements},
	ty = {JOUR},
	url = {https://doi.org/10.1007/BF01646743},
	volume = {33},
	year = {1973},
	Bdsk-Url-1 = {https://doi.org/10.1007/BF01646743}}

@article{esser2010general,
  title={A general framework for a class of first order primal-dual algorithms for convex optimization in imaging science},
  author={Esser, Ernie and Zhang, Xiaoqun and Chan, Tony F},
  journal={SIAM Journal on Imaging Sciences},
  volume={3},
  number={4},
  pages={1015--1046},
  year={2010},
  publisher={SIAM}
}

@article{chambolle2011,
  title={A first-order primal-dual algorithm for convex problems with applications to imaging},
  author={Chambolle, Antonin and Pock, Thomas},
  journal={Journal of mathematical imaging and vision},
  volume={40},
  number={1},
  pages={120--145},
  year={2011},
  publisher={Springer}
}

@article{condat2013primal,
  title={A primal-dual splitting method for convex optimization involving Lipschitzian, proximable and linear composite terms},
  author={Condat, Laurent},
  journal={Journal of Optimization Theory and Applications},
  volume={158},
  number={2},
  pages={460--479},
  year={2013},
  publisher={Springer}
}

@article{calatroni2017infimal,
  title={Infimal convolution of data discrepancies for mixed noise removal},
  author={Calatroni, Luca and De Los Reyes, Juan Carlos and Sch{\"o}nlieb, Carola-Bibiane},
  journal={SIAM Journal on Imaging Sciences},
  volume={10},
  number={3},
  pages={1196--1233},
  year={2017},
  publisher={SIAM}
}

@article{Temerinac-Ott2012,
  author = {Temerinac-Ott, Maja and Ronneberger, Olaf and Ochs, Peter and Driever, Wolfgang and Brox, Thomas and Burkhardt, Hans},
  doi = {10.1109/TIP.2011.2181528},
  file = {:Users/bogdan/Documents/Papers/temerinac-ott2012.pdf:pdf},
  issn = {10577149},
  journal = {IEEE Transactions on Image Processing},
  keywords = {3-D images,Deblurring with spatially variant point spread function (PSF),L1-regularization,PSF estimation,deconvolution,fluorescence microscopy,registration with irregularly placed point markers,tomography},
  number = {4},
  pages = {1863--1873},
  title = {{Multiview deblurring for 3-D images from light-sheet-based fluorescence microscopy}},
  volume = {21},
  year = {2012}
}

@inproceedings{Hirsch2010,
  title = {Efficient Filter Flow for Space-Variant Multiframe Blind Deconvolution},
  author = {Hirsch, M. and Sra, S. and Sch{\"o}lkopf, B. and Harmeling, S.},
  booktitle = {Proceedings of the 23rd IEEE Conference on Computer Vision and Pattern Recognition},
  pages = {607-614},
  publisher = {IEEE},
  organization = {Max-Planck-Gesellschaft},
  school = {Biologische Kybernetik},
  address = {Piscataway, NJ, USA},
  month = jun,
  year = {2010},
  month_numeric = {6}
}

@article{OConnor2017,
  author = {O'Connor, Daniel and Vandenberghe, Lieven},
  doi = {10.1007/s10589-017-9901-1},
  file = {:Users/bogdan/Documents/Papers/OConnor-Vandenberghe2017{\_}Article{\_}TotalVariationImageDeblurringW.pdf:pdf},
  issn = {15732894},
  journal = {Computational Optimization and Applications},
  keywords = {Convex optimization,Douglas–Rachford algorithm,Image deblurring,Monotone operators,Total variation},
  number = {3},
  pages = {521--541},
  title = {{Total variation image deblurring with space-varying kernel}},
  volume = {67},
  year = {2017}
}

@article{Hadj2014,
  author = {Hadj, Saima Ben and Blanc-F{\'{e}}raud, Laure and Aubert, Gilles},
  doi = {10.1137/130945776},
  file = {:Users/bogdan/Documents/Papers/hadj2014.pdf:pdf},
  issn = {19364954},
  journal = {SIAM Journal on Imaging Sciences},
  keywords = {Blind restoration,PSF,SGP algorithm,Space variant blur},
  number = {4},
  pages = {2196--2225},
  title = {{Space variant blind image restoration}},
  volume = {7},
  year = {2014}
}

@article{Preibisch2014,
  author = {Preibisch, Stephan and Amat, Fernando and Stamataki, Evangelia and Sarov, Mihail and Singer, Robert H. and Myers, Eugene and Tomancak, Pavel},
  doi = {10.1038/nmeth.2929},
  file = {:Users/bogdan/Documents/Papers/preibisch2014.pdf:pdf},
  issn = {15487105},
  journal = {Nature Methods},
  number = {6},
  pages = {645--648},
  pmid = {24747812},
  title = {{Efficient Bayesian-based multiview deconvolution}},
  volume = {11},
  year = {2014}
}

@article{Becker2019,
  author = {Becker, Klaus and Saghafi, Saiedeh and Pende, Marko and Sabdyusheva-Litschauer, Inna and Hahn, Christian M. and Foroughipour, Massih and J{\"{a}}hrling, Nina and Dodt, Hans Ulrich},
  doi = {10.1038/s41598-019-53875-y},
  file = {:Users/bogdan/Documents/Papers/becker2019.pdf:pdf},
  issn = {20452322},
  journal = {Scientific Reports},
  number = {1},
  pages = {1--14},
  pmid = {31772375},
  title = {{Deconvolution of light sheet microscopy recordings}},
  volume = {9},
  year = {2019}
}

@article{Guo2020,
  author = {Guo, Min and Li, Yue and Su, Yijun and Lambert, Talley and Nogare, Damian Dalle and Moyle, Mark W and Duncan, Leighton H and Ikegami, Richard and Santella, Anthony and Rey-suarez, Ivan and Green, Daniel and Beiriger, Anastasia and Chen, Jiji and Vishwasrao, Harshad and Ganesan, Sundar and Prince, Victoria},
  doi = {10.1038/s41587-020-0560-x},
  file = {:Users/bogdan/Documents/Papers/guo2020.pdf:pdf},
  journal = {Nature Biotechnology},
  title = {{Rapid image deconvolution and multiview fusion for optical microscopy}},
  year = {2020}
}

@article{Nagy1998,
  author = {Nagy, James G. and O'Leary, Dianne P.},
  doi = {10.1137/S106482759528507X},
  file = {:Users/bogdan/Documents/Papers/nagy1998.pdf:pdf},
  isbn = {1064827595285},
  issn = {10648275},
  journal = {SIAM Journal of Scientific Computing},
  keywords = {Convolution,Discrete ill-posed problems,First-kind integral equations,Image restoration,Regularization,Spatially variant point spread function},
  number = {4},
  pages = {1063--1082},
  title = {{Restoring images degraded by spatially variant blur}},
  volume = {19},
  year = {1998}
}

@article{Cueva2020,
	doi = {10.1088/1361-6420/ab80d8},
	url = {https://doi.org/10.1088%2F1361-6420%2Fab80d8},
	year = {2020},
	month = {jul},
	publisher = {{IOP} Publishing},
	volume = {36},
	number = {7},
	pages = {075005},
	author = {Evelyn Cueva and Matias Courdurier and Axel Osses and Victor Casta{\~{n}}eda and Benjamin Palacios and Steffen Härtel},
	title = {{Mathematical modeling for 2D light-sheet fluorescence microscopy image reconstruction}},
	journal = {Inverse Problems},
}

@article{Starck2002,
  author = {Starck, J. L. and Pantin, E. and Murtagh, F.},
  doi = {10.1086/342606},
  file = {:Users/bogdan/Documents/Papers/starck2002.pdf:pdf},
  issn = {0004-6280},
  journal = {Publications of the Astronomical Society of the Pacific},
  number = {800},
  pages = {1051--1069},
  title = {{Deconvolution in Astronomy: A Review}},
  volume = {114},
  year = {2002}
}

@ARTICLE{Sarder2006,
  author={P. {Sarder} and A. {Nehorai}},
  journal={IEEE Signal Processing Magazine}, 
  title={{Deconvolution methods for 3-D fluorescence microscopy images}}, 
  year={2006},
  volume={23},
  number={3},
  pages={32-45}
}

@article{McNally1999,
  author = {McNally, James G. and Karpova, Tatiana and Cooper, John and Conchello, Jos{\'{e}} Angel},
  doi = {10.1006/meth.1999.0873},
  file = {:Users/bogdan/Documents/Papers/mcnally1999.pdf:pdf},
  issn = {10462023},
  journal = {Methods: A Companion to Methods in Enzymology},
  number = {3},
  pages = {373--385},
  title = {{Three-dimensional imaging by deconvolution microscopy}},
  volume = {19},
  year = {1999}
}

@article{Debarnot2019,
	doi = {10.1088/1361-6420/ab2fb3},
	year = {2019},
	month = {sep},
	publisher = {{IOP} Publishing},
	volume = {35},
	number = {10},
	pages = {105011},
	author = {Debarnot, Valentin and Escande, Paul and Weiss, Pierre},
	title = {A scalable estimator of sets of integral operators},
	journal = {Inverse Problems}
}

@article{Loic2015,
  TITLE = {Fast Approximations of Shift-Variant Blur},
  AUTHOR = {Denis, Lo{\"i}c and Thi{\'e}baut, {\'E}ric and Soulez, Ferr{\'e}ol and Becker, Jean-Marie and Mourya, Rahul},
  URL = {https://hal-ujm.archives-ouvertes.fr/ujm-00979825},
  JOURNAL = {{International Journal of Computer Vision}},
  PUBLISHER = {{Springer Verlag}},
  VOLUME = {115},
  NUMBER = {3},
  PAGES = {pp 253-278},
  YEAR = {2015},
  MONTH = Dec,
  DOI = {10.1007/s11263-015-0817-x},
  KEYWORDS = {deconvolution ; PSF ; inverse problems ; blur ; image restoration},
  PDF = {https://hal-ujm.archives-ouvertes.fr/ujm-00979825/file/IJCV_denis_approx_inv_shiftvariant_blur_R2.pdf},
  HAL_ID = {ujm-00979825},
  HAL_VERSION = {v2},
}

@article{Chambolle2016,
  author = {Chambolle, Antonin and Pock, Thomas},
  file = {:Users/bogdan/Documents/Papers/an{\_}introduction{\_}to{\_}continuous{\_}optimization{\_}for{\_}imaging.pdf:pdf},
  journal = {Acta Numerica},
  pages = {161--319},
  title = {{An introduction to continuous optimization for imaging}},
  volume = {25},
  year = {2016}
}

@article{nmeth2015,
  doi = {10.1038/nmeth.3251},
  file = {:Users/bogdan/Documents/Papers/nmeth.3251.pdf:pdf},
  issn = {15487105},
  journal = {Nature methods},
  number = {1},
  pages = {1},
  title = {{Method of the Year 2014}},
  volume = {12},
  year = {2015}
}

@article{Petrov2017,
  author = {Petrov, Petar N. and Shechtman, Yoav and Moerner, W. E.},
  doi = {10.1364/oe.25.007945},
  issn = {1094-4087},
  journal = {Optics Express},
  mendeley-groups = {postdoc},
  number = {7},
  pages = {7945},
  title = {{Measurement-based estimation of global pupil functions in 3D localization microscopy}},
  volume = {25},
  year = {2017}
}

@article{Paxman1992,
  author = {Paxman, Richard G. and Schulz, Timothy J. and Fienup, James R.},
  doi = {10.1364/josaa.9.001072},
  issn = {1084-7529},
  journal = {Journal of the Optical Society of America A},
  mendeley-groups = {postdoc},
  number = {7},
  pages = {1072},
  title = {{Joint estimation of object and aberrations by using phase diversity}},
  volume = {9},
  year = {1992}
}

@misc{Dunn2011,
  author = {Dunn, Alan M. and Hofmann, Owen S. and Waters, Brent and Witchel, Emmett},
  booktitle = {Proceedings of the 20th USENIX Security Symposium},
  isbn = {9781931971874},
  mendeley-groups = {postdoc},
  pages = {395--410},
  title = {{Cloaking malware with the trusted platform module}},
  year = {2011}
}

@article{Hanser2004,
  author = {Hanser, B. M. and Gustafsson, M. G.L. and Agard, D. A. and Sedat, J. W.},
  doi = {10.1111/j.0022-2720.2004.01393.x},
  issn = {00222720},
  journal = {Journal of Microscopy},
  keywords = {Deconvolution,Fluorescence microscopy,PSF,Phase retrieval,Spherical aberration},
  mendeley-groups = {postdoc},
  number = {1},
  pages = {32--48},
  title = {{Phase-retrieved pupil functions in wide-field fluorescence microscopy}},
  volume = {216},
  year = {2004}
}

@article{Wyant1992,
  author = {Wyant, James C. and Creath, Katherine},
  title = {Basic Wavefront Aberration Theory for Optical Metrology},
  journal = {Applied Optics and Optical Engineering, Volume XI},
  pages = {11--53},
  publisher = {New York: Academic Press},
  year = {1992}
}

@article{Stokseth1969,
  author = {Stokseth, A.},
  doi = {10.1364/josa.59.001314},
  file = {:Users/bogdan/Documents/Papers/stokseth1969.pdf:pdf},
  issn = {0030-3941},
  journal = {J Opt Soc Amer},
  mendeley-groups = {postdoc},
  number = {10},
  pages = {1314--1321},
  title = {{Properties of a Defocused Optical System}},
  volume = {59},
  year = {1969}
}

@article{Soulez2013,
  author = {Soulez, Ferr{\'{e}}ol and Hi{\'{e}}baut, Eric T. and Ourneur, Yves T. and Enis, Lo{\"{i}}c D.},
  file = {:Users/bogdan/Documents/Papers/soulez2013.pdf:pdf},
  journal = {GRETSI},
  mendeley-groups = {postdoc},
  title = {{D{\'{e}}convolution aveugle en microscopie de fluorescence 3D}},
  year = {2013}
}

@article{lanza_image_2014,
	title = {Image restoration with {Poisson}–{Gaussian} mixed noise},
	author = {Lanza, Alessandro and Morigi, Serena and
    Sgallari, Fiorella and Wen, You-Wei},
	journal = {Comput Methods Biomech Biomed Eng Imaging Vis},
	year = {2014},
	volume = {2},
	pages = {12--24}
}


\end{document}